\documentclass[11pt,letterpaper,reqno]{amsart}

\usepackage{amsmath,amscd}

\usepackage{setspace}
\usepackage{multicol}
\usepackage[lite]{amsrefs} %used to create the bibliography:
\usepackage{amssymb} %special symbols in the AMSfonts package, listed in http://www.ctan.org/tex-archive/info/symbols/math/symbols.pdf
\usepackage{graphicx} %the updated version of the older graphics.sty
\usepackage{pdflscape}
\usepackage{euscript}
\usepackage{color}
\usepackage{array}
\usepackage{picture}
\usepackage{epic}
\usepackage{tikz}
 \usetikzlibrary{backgrounds,cd}
\usepackage{comment}
\usepackage{enumerate}
\usepackage{hyperref}
\usepackage[margin=1.25in]{geometry}
\usepackage{caption}
\usepackage{MnSymbol}
\usepackage{enumitem}
\usepackage{fancyhdr} %no headers on every page, page number place in upper right
\usepackage{centernot}
\usepackage{mathtools}
\usepackage{stmaryrd}
\usepackage{float}
\usepackage{ytableau}
\newcommand{\Spec}{\operatorname{Spec}}
\newcommand{\discrep}{\operatorname{discrep}}
\newcommand{\mult}{\operatorname{mult}}
\newcommand{\Stab}{\operatorname{Stab}}
\newcommand{\Ind}{\operatorname{Ind}}
\newcommand{\reg}{\operatorname{reg}}

\newcommand{\fa}{{\mathfrak a}}             % more Fraktur
\newcommand{\fb}{{\mathfrak b}}

\newcommand{\fg}{{\mathfrak g}}
\newcommand{\fh}{{\mathfrak h}}

\newcommand{\fl}{{\mathfrak l}}

\newcommand{\fn}{{\mathfrak n}}

\newcommand{\fp}{{\mathfrak p}}

\newcommand{\fu}{{\mathfrak u}}
\newcommand{\fs}{{\mathfrak s}}

\newcommand{\mcO}{\mathcal O}
\newcommand{\mcM}{\mathcal M}

\newcommand{\ga}{\alpha}

\newcommand{\ol}[1]{\overline{{#1}}}

\numberwithin{equation}{section}

\theoremstyle{plain} %% This is the default, anyway
\newtheorem{theorem}{Theorem}[section]
\newtheorem{corollary}[theorem]{Corollary}
\newtheorem{lemma}[theorem]{Lemma}
\newtheorem{proposition}[theorem]{Proposition}
\newtheorem*{proposition*}{Proposition}

\newtheorem{conjecture}[theorem]{Conjecture}

\theoremstyle{definition}
\newtheorem{definition}[theorem]{Definition}

\newtheorem{example}[theorem]{Example}

\theoremstyle{remark}
\newtheorem{remark}[theorem]{Remark}

\newcommand{\C}{\mathbb{C}}

\newcommand{\V}{\mathcal{V}}   
   
\newcommand{\A}{\mathbb{A}}
\newcommand{\Q}{\mathbb{Q}}   
\newcommand{\Z}{\mathbb{Z}}   

\newcommand{\co}{\mathcal{O}}

\usepackage{amsmath,amscd}

\makeatletter
\@namedef{subjclassname@2020}{%
  \textup{2020} Mathematics Subject Classification}
\makeatother

\hypersetup{bookmarksdepth=2}

\begin{document}

\title[Nilpotent orbit covers for $SL_n$]{Functions on nilpotent orbit covers and birational geometry}

\author{William Graham}
\address{Department of Mathematics, University of Georgia, Athens GA 30602, USA}
\email{wag@uga.edu}
\author{Scott Joseph Larson}
\address{Department of Mathematics and Statistics, College of Charleston, Charleston SC 29424, USA}
\email{larsonsj@cofc.edu}
\author{Alberto San Miguel Malaney}
\address{School of Mathematics and Statistics, University of Glasgow, Glasgow, G12 8QQ,
UK}
\email{alberto.malaney@glasgow.ac.uk}
\subjclass[2020]{Primary 20G05, 14M15; Secondary 05E10} 
\keywords{Springer resolution, nilpotent orbits}

\date{\today}
\thanks{This material is based upon work supported by the National Science Foundation under Grants No. DMS-2302116 and DMS-2302117.}

\begin{abstract}
We use an analogue of the Springer resolution to describe the $G$-module structure on the ring of regular
functions on the universal cover $\widetilde{\co}$ of any nilpotent orbit for $G = SL_n$. Building on previous work on
the extended Springer resolution, we construct a variety $\widetilde{\mcM}$ that is finite over the cotangent bundle of a partial flag variety $G/P$, and proper and birational over the affinization $\mcM$ of $\widetilde{\co}$. We use techniques in birational geometry to show that $\widetilde{\mcM}$ has rational singularities, which provides the  cohomology vanishing needed to describe the ring of functions on $\widetilde{\co}$ as an induced representation from a Levi subgroup of $G$.  Our results also yield a description of the structure of
$R(\widetilde{\co})$ as a graded $G$-module.  We describe the minimal embedding of $\mcM$, study the lifting of characters of the component group of $\widetilde{\co}$ to parabolics and Levi subgroups,
and make a more general vanishing conjecture.
\end{abstract}

\maketitle 

\thispagestyle{empty}
\setcounter{tocdepth}{1}
%\tableofcontents %to include a table of contents
%\linespread{1.2}
\parskip=4pt \baselineskip=14pt

\section{Introduction}
Let $G$ be a complex simply connected semisimple group with Lie algebra $\fg$.  Let $\co \subset \fg$ be a nilpotent $G$-orbit with universal cover $\widetilde{\co}$,
and let $R(\co)$ and $R(\widetilde{\co})$ denote their rings of regular functions.  These and related rings have long been of interest
in representation theory (see for example \cite{LMBM}, \cite{Vog90}).  

The study of the $G$-module structure of $R(\co)$ was initiated by Kostant \cite{Kos},
who proved that if $\co$ is the principal (that is, regular) nilpotent orbit, then as a $G$-module, $R(\co) = \Ind_H^G (0)$, where $H$ is a maximal torus of $G$, and $0$ corresponds to the trivial
character of $H$.  For an arbitrary nilpotent orbit $\co$, McGovern \cite{McG} obtained a formula for $R(\co)$ as a linear combination of terms of the
form $\Ind_L^G (V_i)$.  Here $L$ is a Levi subgroup of a Jacobson-Morozov parabolic subgroup of $G$ constructed out of an $\mathfrak{sl}_2$-triple containing an element of $\co$,
and the $V_i$ are explicitly described representations of $L$.  McGovern's general formula is an equality on the level of virtual representations, since in general some of the
coefficients in his formula are negative.  For $SL_n$, the situation simplifies, since every nilpotent orbit is strongly Richardson, and McGovern observed (see \cite[p.~215]{McG}) that if
one chooses $L$ differently---more precisely, if $L$ is a Levi subgroup of a Richardson parabolic for $\co$---then
$R(\co) = \Ind_L^G(0)$.

An analogous formula for nilpotent orbit covers has remained elusive.  Although 
McGovern proved the existence of a formula for $R(\widetilde{\co})$ as a sum of virtual
representations, of a form similar to his general formula for $R(\co)$, there has been no explicit
formula except for certain orbits.  For the regular orbit, a formula for $R(\widetilde{\co})$ is
given in \cite{Gra1992}.  Both \cite{McG} and \cite{Gra1992} rely essentially on the vanishing of certain cohomology
groups, proved in \cite{McG} using a resolution of singularities of $\Spec R(\co)$ and 
the Grauert-Riemenschneider theorem, and in \cite{Gra1992} by
using a theorem of Hesselink \cite{He76}.  
Sommers conjectured a formula for the graded $G$-module decomposition of $R(\widetilde{\co})$ for any $G$
(see Conjecture 4.2 of \cite{Som}).  Sommers stated that he could prove a variation of this conjecture in type $A$ which would yield a 
formula for $R(\widetilde{\co})$, but a proof has not appeared.

In this paper we obtain formulas for the $G$-module decomposition of $R(\widetilde{\co})$ for any nilpotent orbit $\co$,
in the case where $G = SL_n$.  For even orbits, these coincide with the formulas conjectured by Sommers.  
We also study the graded structure of $R(\widetilde{\co})$ as a $G$-representation and as
an $R(\co)$-module.   We
describe the minimal embedding of $\mcM = \Spec R(\widetilde{\co})$
in the sense of Brylinski and Kostant \cite{BrylinskiKostant1994}
and prove that the map $\mcM \to \overline{\co}$ is a bijection over the boundary of $\co$, extending
\cite[Remark 4.15]{BrylinskiKostant1994} to arbitrary orbits.
As in the formula for $R(\co)$ for $SL_n$ given in \cite{McG},
we make use of the fact that for $SL_n$, any $\co$ is strongly Richardson,
i.e., there is a resolution of singularities $T^*(G/P) \to \overline{\co}$ of the orbit closure $\co$, for some Richardson parabolic
subgroup $P$ of $G$.  

We now describe our results in more detail.  
Suppose $\co = \co_p$ is the orbit
corresponding to a partition $p = (p_1, \ldots, p_{t})$ of $n$.  Let $p'$ denote the transpose partition
of $p$, and let $L$ denote the subgroup of block diagonal matrices with block sizes given from left to right
by the parts of $p'$.  The Richardson parabolics for $\co$ are the parabolic subgroups
with a Levi factor conjugate to $L$.  Let $P = LU$ be the parabolic subgroup containing the Borel subgroup
$B$ of lower triangular matrices.  We modify the resolution $T^*(G/P) \to \overline{\co}$ to obtain
a map $\widetilde{\mcM} \longrightarrow \mcM := \Spec R(\widetilde{\co})$.
We construct $\widetilde{\mcM}$ explicitly and prove that although $\widetilde{\mcM}$ is not smooth in general, it
has rational singularities.  Using this, we show that as $G$-modules,
\begin{equation} \label{e:main}
R(\widetilde{\co}) = \sum_{j=0}^{k-1} \Ind_L^G(\mu_j)
\end{equation}
(see Theorem \ref{t:rings}), where $k$ is the order of the fundamental group of $\co$.
The $\mu_j$ are explicitly determined weights which give characters $e^{\mu_j}$ of $L$,
and the characters of the component group are obtained from the restrictions of the $e^{\mu_j}$.

This choice of parabolic is well-suited for applying results from birational geometry,
but Theorem \ref{t:rings} can be used to obtain other expressions for $R(\widetilde{\co})$
using different weights and different subgroups.  In particular, we
index the $\mu_j$
so that $\mu_0 = \lambda_0 = 0$ and for $j >0$, $\mu_j$ is conjugate under the Weyl group to the fundamental dominant
weight $\lambda_{j n /k}$ (in the notation of \cite{Hum}).    Each part of $p'$ occurs with multiplicity
divisible by $k$.  Let $d$ be the partition with the same parts as $p'$, but each with multiplicity $1/k$ of its
multiplicity in $p'$, and let $q$ be the sequence formed by concatenating $k$ copies of $d$.
Let $M$ be the Levi subgroup of block diagonal matrices with block sizes given from
left to right by the entries of $q$.  Then $M$ is conjugate to $L$, and
\begin{equation} \label{e:main2}
R(\widetilde{\co}) = \sum_{j=0}^{k-1} \Ind_M^G(\lambda_{j n /k}).
\end{equation}

Observe that all the terms in \eqref{e:main} and \eqref{e:main2} are positive, so there is no cancellation.
Also, while the Levi subgroup $M$ in \eqref{e:main2} 
depends on the orbit,
the weights $\lambda_{j n/k}$ depend only on the order $k$ of the fundamental
group of the orbit.  In fact, the formulas for both $R(\co)$ and $R(\widetilde{\co})$ in $SL_n$
are strongly reminiscent of the corresponding formulas in the case where $\co$ is principal.

Because induction is compatible with conjugation by elements of $G$, and we can simultaneously
conjugate the Levi subgroup $L$ and the weight $\mu_j$ to $M$ and $\lambda_{j n/k}$, 
\eqref{e:main2} is a straightforward consequence of \eqref{e:main}.
Moreover, for even orbits,
these Levi subgroups and weights are conjugate to those considered in
\cite{Som}, and these formulas agree with the formulas that would follow
from \cite[Conjecture 4.2]{Som}. 
However, the ring $R(\mcM)$
is graded, and to study this grading, we must consider not only the Levi subgroups $L$ and $M$, but the
parabolic subgroups $P = LU$ and $Q = M U_Q$.  In particular, we must show that the characters
$e^{\mu_j}$ and $e^{\lambda_{j n/k}}$ lift characters of the component group $A(\co)$ of the orbit
to $P$ and $Q$, respectively.  For $P$, this follows from an explicit description of component groups.
The parabolic subgroups $P$ and $Q$ are not $G$-conjugate, but their Levi subgroups
are, and the results of Section \ref{s:lifting} show that the lifting property for $Q$ follows from the
lifting property for $P$ and the $G$-conjugacy of the pairs $(L, \mu_j)$ and
$(M, \lambda_{j n/k})$.

The results in this paper permit a description of
$R(\widetilde{\co})$ as a graded $G$-module in terms of variants of
Lusztig's $q$-analogues of weight multiplicity formulas (see \cite{Pan}).
However, the $q$-analogs that appear in the graded
formulas depend on the unipotent radical of the parabolic subgroup, not only on the
Levi factors.  If $p$ has parts of different sizes, then $P$ and $Q$ are not conjugate, so they yield different
formulas for the graded $G$-module structure.  For the same reason, the
formulas we prove are different from the formulas for the graded $G$-module structure than would
follow from \cite[Conjecture 4.2]{Som}. 

Because the weights $\lambda_{jn/k}$ are dominant, the expressions using these
weights and the parabolic $Q = M U_Q$ are well-suited
to study the grading on $R(\widetilde{\co})$.  Combining our work with
a result of Grantcharov \cite{Grant}, we show that $R(\widetilde{\co})$ is generated as an $R(\co)$-module
by the unique copies of the minuscule representation it contains, along with the trivial representation.
We describe the minimal embedding
of $\mcM$ in the sense of Brylinski and Kostant, and show that the natural map $\mcM \to \overline{\co}$, which is a $k$-fold
covering over the orbit $\co$, is bijective over the boundary of $\co$, a result due
to Brylinski and Kostant for the principal orbit (see \cite[Remark 4.13]{BrylinskiKostant1994}).

The cohomology vanishing needed to obtain \eqref{e:main} would follow from \cite{Bro} if the
weights $\mu_j$ were dominant, but they are not, and it is essential to our proof
that we have cohomology vanishing for these nondominant weights.  Even though the
weights in \eqref{e:main2} are dominant, we need \eqref{e:main} to prove this result.
The weights $\mu_j$ emerge naturally from the explicit construction of $\widetilde{\mcM}$,
and the necessary vanishing is a consequence of the statement that $\widetilde{\mcM}$ has rational singularities.
We can prove this using methods from birational geometry because we have an explicit construction
of $\widetilde{\mcM}$.  This circumvents the algebraic difficulties encountered in trying to generalize
Hesselink's result (see Remarks \ref{r:vanish} and \ref{r:vanish2} for further discussion).

The construction of $\widetilde{\mcM}$ resembles the construction in the last section of \cite{Gra2022} for arbitrary
nilpotent orbits in all types.  However, the two constructions differ, except for orbits parametrized by partitions
for which all parts are equal.  Indeed, the construction in \cite{Gra2022} is built out of the resolution used in \cite{McG},
which uses a Jacobson-Morozov parabolic associated to an element of $\co$, while the construction
here uses the resolution $T^*(G/P) \to \overline{\co}$.  Nevertheless, the
construction here does make use of the toric results used for the regular nilpotent orbit in \cite{Gra2022}.
It seems likely to us that the ideas we use here can be adapted to the construction for other orbits in \cite{Gra2022} at
least for $SL_n$.   We expect that this would yield formulas for $R(\widetilde{\co})$ 
consistent with Sommers's conjecture.  We also hope to investigate extensions of the ideas
in this paper to nilpotent orbits in other types.

Finally, $\widetilde{\mcM}$ can be defined in the more general setting of symplectic singularities.
It would be interesting to explore explicit descriptions of this variety in other examples.

An outline of the paper is as follows.  Section \ref{GenPic} gives a general construction that
is used in the proof of our main results.  Section \ref{s:Det} shows that if $M$ is an affine space of block diagonal square matrices
and $Y$ is obtained from $M$ by adjoining a $k$-th root of the determinant function, then $Y$ has rational singularities.
Section \ref{s:Springer} constructs the extended Springer resolution $\widetilde{\mcM} \to \mcM$.  Although $\widetilde{\mcM}$ is
not smooth, we deduce from the results of the previous section the key fact that $\widetilde{\mcM}$ has rational singularities.
In Section \ref{s:Gmod}, we deduce the formulas \eqref{e:main} and \eqref{e:main2}  for the $G$-module decomposition of $R(\widetilde{\mcM})$. 

In order to study the graded structure of $R(\mcM)$, we need to show that lifting characters of component groups
to parabolic subgroups is compatible with conjugation of Levi subgroups.  The necessary results are given in
Section \ref{s:lifting}, which also contains a description of minimal lifts as equally distributed characters.
Section \ref{s:grading} contains results
related to the grading on $R(\mcM)$ which can be used to obtain a description of the graded $G$-module structure.
Section \ref{s:vanishing} contains a more general cohomology vanishing conjecture and some consequences,
and explains the relation of the formulations in this paper with
Sommers's conjecture. 

\medskip

{\bf Acknowledgments:} We thank Eric Sommers for helpful feedback, which in particular led
to the inclusion of the graded versions of the results on
the $G$-module structure. We thank Valery Alexeev for a useful discussion about birational geometry,
and Nikolay Grantcharov for sharing his preprint \cite{Grant} and for providing a counterexample
to an earlier version of Conjecture \ref{conj:vanishing}.

\medskip

{\bf AI Disclosure:} We used ChatGPT, Claude, and Gemini for background research, to find references and for 
computing examples. 
Additionally, some of the techniques used to show $Y$ has canonical singularities in the proof of Theorem \ref{RRat} were suggested by ChatGPT. All writing 
was done by the authors, and we maintain full responsibility for the paper.

\vfill

\pagebreak

\subsection{Conventions and notation} Throughout the paper we work with schemes over $\C$.
We write $R(X)$ for the ring of regular functions on a scheme $X$.
Let $G = SL_n$ with diagonal maximal torus $H$ and center $Z \cong \Z_n$, and let $B$ be the Borel subgroup consisting of lower triangular matrices.
A parabolic subgroup $P$ is called standard if it contains $B$. A Levi subgroup $L$ of $G$ is called standard if it is the Levi factor of a standard parabolic $P$ such that $H\subset L$.
The Weyl group of $G$ is denoted by $W$.  The letter $T$ will denote the diagonal maximal torus in $SL_k$, where
$k$ will be an integer dividing $n$. By a reductive part of a linear algebraic group we mean a reductive complement to the unipotent radical; i.e., a Levi factor.
We use this term mostly for stabilizer groups, and reserve the term Levi subgroup for a Levi factor of a parabolic subgroup.
For any $P$-module $V$, let $H^i(G/P, V)$ denote the $i$-th cohomology
group of the sheaf of sections of $G \times^P V \to G/P$.

As usual, we denote the Lie algebra of an algebraic group by the corresponding fraktur letter, so 
$\fg$, $\fh$, and $\fb$ are the Lie algebras of $G$, $H$, and $B$.  Given a root $\ga \in \fh^*$ of $G$, we write
$\fg_{\ga}$ for the corresponding root subspace of $\fg$.  
We choose the positive system such that the root spaces $\fg_{\ga} \subset \fb$ correspond to negative roots,
so $B$ is the ``negative" Borel subgroup.  We identify $\fh^*$ with $\C^n / \C \cdot (1, \ldots, 1)$, so
the fundamental dominant weights are $\lambda_i = (1^i, 0^{n-i})$, where the exponential notation
denotes repeated indices.  The minuscule weights are the Weyl group conjugates of the $\lambda_i$.

Given a dominant weight $\mu$ for $G$, $V_{\mu}$ denotes the irreducible $G$-module with highest
weight $\mu$.  
Given a Levi subgroup $L$, we write $e^{\lambda}: L \to \C^*$ for a homomorphism whose derivative is $\lambda \in \fl^*$.
If $L$ contains $H$, then $\lambda$ is
determined by its restriction to $\fh$, so we can identify $\lambda$ with 
an element of $\fh^*$.  The character $e^{\lambda}$ must be trivial on the semisimple subgroup $[L, L]$, 
so it is determined by its values on the center $Z(L)$, and $\lambda$ (viewed
as an element of $\fh^*$) is orthogonal to the coroots of $L$.  As usual $\Ind_L^G(\lambda)$ is
the subspace of $R(G)$ consisting of functions $f \in R(G)$ such that $f(g \ell) = e^{\lambda}(\ell^{-1}) f(g)$ for all $g \in G, \ell \in L$.
If $P = LU$ is a parabolic subgroup with Levi subgroup $L$ and unipotent radical $U$, 
we extend a character $e^{\lambda}$ of $L$ to a character of $P$
which is trivial on $U$.  

The nilpotent orbits in $\fg$ are parametrized by partitions $p$ of
$n$, with the orbit $\co_p$ consisting of matrices of Jordan type $p$.  If $p$ is understood, we usually
simply write $\co$ for the orbit.  Let $G^e$ denote the stabilizer in $G$ of $e \in \co$
with identity component $G^e_0$.  Then $\co \cong G/G^e$, with universal cover $\widetilde{\co} = G/G^e_0$.
The component group of $\co$ is $A(\co) = G^e/G^e_0$. 
It does not depend on a choice of $e$ because for $e, e' \in \co$, $G^e/G^e_0$ and $G^{e'}/G^{e'}_0$ are canonically identified via conjugation since $A(\co)$ is abelian.
Since $G$ is simply connected, $A(\co) \cong \pi_1(\co, e)$. 

We frequently write matrices whose only nonzero entries are in certain blocks (such as diagonal or block diagonal matrices) simply
by writing the sequence of blocks.

\section{Relative Normalizations} \label{GenPic}

Recall from \cite[Definition 4.1.24]{Liu} that the normalization $g: Z \to Y$ of an integral scheme $Y$ in a field extension $L$ of $K(Y)$ is uniquely characterized by the properties that $Z$ is a normal variety with $K(Z) = L$, and that $g$ is an integral morphism that extends the natural map $\Spec(L) \to Y$. For the remainder of the section, consider a variety $X$ along with a birational map $\pi: Y \to X$ and a finite surjective map $f: \mcM \to X$. We define $g: \widetilde{\mcM} \to Y$ to be the normalization of $Y$ in the function field $K(\mcM)$ of $\mcM$.  

\begin{lemma} \label{RelNormMap}

There is a map $\varphi: \widetilde{\mcM} \to \mcM$ providing a commutative diagram
% https://q.uiver.app/#q=WzAsNCxbMiwyLCJYIl0sWzAsMiwiXFxtY00iXSxbMiwwLCJZIl0sWzAsMCwiXFx3aWRldGlsZGV7XFxtY019Il0sWzEsMCwiZiJdLFsyLDAsIlxccGkiLDJdLFszLDIsImciXSxbMywxLCJcXHZhcnBoaSIsMl1d
\[\begin{tikzcd}
	{\widetilde{\mcM}} && Y \\
	\\
	\mcM && X
	\arrow["g", from=1-1, to=1-3]
	\arrow["\varphi"', from=1-1, to=3-1]
	\arrow["\pi"', from=1-3, to=3-3]
	\arrow["f", from=3-1, to=3-3]
\end{tikzcd}\]
\end{lemma}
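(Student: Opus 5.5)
The plan is to build $\varphi$ affine-locally from inclusions of subrings of the function field $L := K(\mcM)$, and then glue. Throughout I take $\mcM$, $X$, $Y$ to be integral, which is implicit in the setup since $\widetilde{\mcM}$ is defined via the function field $K(\mcM)$.

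\emph{Setup of the fields.} Since $f$ is finite and surjective, it is dominant, so it induces an inclusion $K(X) \hookrightarrow K(\mcM) = L$. Since $\pi$ is birational, it identifies $K(X) = K(Y)$. Hence $L$ is a (finite) field extension of $K(Y)$, and $g\colon \widetilde{\mcM} \to Y$ is the normalization of $Y$ in $L$. Concretely, over an affine open $\Spec C \subset Y$, $g^{-1}(\Spec C) = \Spec C'$, where $C'$ is the integral closure of $C$ in $L$. This local description is the standard construction behind \cite[Definition 4.1.24]{Liu}.

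\emph{Local construction.} Fix an affine open $V = \Spec A \subset X$. Because $f$ is finite, $f^{-1}(V) = \Spec B$ with $B$ a finite $A$-algebra. Since $\mcM$ is integral and $f$ is dominant, $A \subset B \subset L$. Now take any affine open $\Spec C \subset \pi^{-1}(V)$. Dominance of $\pi$ gives $A \subset C \subset K(Y) \subset L$. Every element of $B$ is integral over $A$, hence over $C$, so $B \subset C'$. The inclusion $B \hookrightarrow C'$ therefore defines a morphism
\[
\varphi_{C}\colon \Spec C' = g^{-1}(\Spec C) \longrightarrow \Spec B = f^{-1}(V) \subset \mcM .
\]
The square commutes on this chart because both composites $A \to B \to C'$ and $A \to C \to C'$ are the same inclusion of subrings of $L$.

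\emph{Gluing.} The sets $\Spec C$, with $V$ ranging over an affine cover of $X$, cover $Y$. On an overlap, covered by smaller affines $\Spec C_0$ (and with $V$ shrunk to an affine $V_0 = \Spec A_0$ if needed), every morphism in sight is induced by an inclusion of subrings of the single field $L$. Hence the $\varphi_C$ agree on overlaps, since two morphisms of integral schemes that induce the same map on function fields and both land in a common affine open coincide. They therefore glue to $\varphi\colon \widetilde{\mcM} \to \mcM$ with $f \circ \varphi = \pi \circ g$.

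The only real point to check is this compatibility, together with the fact that $\varphi_C$ lands in $f^{-1}(V)$. The latter holds by construction.

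Alternatively, one can get $\varphi$ directly from the universal property of relative normalization (Stacks Project, tag 035I). The generic point $\Spec L \to \mcM$ composed with $f$ factors through $Y$. Because $\mcM$ sits inside the integral closure of $\mathcal O_Y$ in $L$ (after base change of $f$ along $\pi$), there is a unique $Y$-morphism from the normalization to $\mcM \times_X Y$, and composing with the first projection gives $\varphi$. I would present the affine argument above, since it is self-contained.
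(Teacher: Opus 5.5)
Your proposal is correct and follows essentially the same route as the paper. Both arguments work over affine opens $\Spec A \subset X$ and $\Spec C \subset \pi^{-1}(\Spec A)$, use that $B = \mathcal{O}(f^{-1}(\Spec A))$ is integral over $A \subset C$ to get $B \subset C'$ inside $K(\mcM)$, and then glue; your overlap check (both local maps land in $f^{-1}(V_0)$ and induce the inclusion into $L$) just spells out the paper's ``patching these together.''
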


\begin{proof}

The normalization in a field extension is a local construction (see \cite[p.~121]{Liu}). Thus, we take affine open subsets $\Spec(A) \subset X$ and $\Spec(R) \subset Y$ such that $\Spec(R)$ maps to $\Spec(A)$, and we let $\Spec(B)$ be the preimage of $\Spec(A)$ under $f$. Note that $\Spec(B)$ is finite over $\Spec(A)$, so every element of $B$ is integral over $A$. Furthermore, since $\pi$ is a birational map between integral schemes, the map $A \to R$ is injective. Hence every element of $B$ is integral over $R$, so $B$ is contained in the integral closure $R'$ of $R$ in $K(B) = K(\mcM)$. Thus, we obtain a canonical map $\Spec(R') \to \Spec(B)$, and $\varphi: \widetilde{\mcM} \to \mcM$ is given by patching these together. The commutativity of the diagram is automatic since all of the ring maps are given by inclusions in $K(\mcM)$. 
\end{proof}

Note that $\widetilde{\mcM}$ is normal, $g$ is finite (see \cite[Proposition 4.1.27]{Liu}), and $\varphi$ is birational. Additionally, by the universal property of the fiber product we have a canonical map 
$$
h: \widetilde{\mcM} \to \mcM \times_X Y.
$$ 
Let $n: N \to \mcM \times_X Y$ be the normalization of $\mcM \times_X Y$. Since $\widetilde{\mcM}$ is normal, the universal property of the normalization (see \cite[Exercise II.3.8]{Ha}) implies that the map $h$ above lifts to a map $j: \widetilde{\mcM} \to N$.

\begin{lemma} \label{jIso}

The map $j$ is an isomorphism.

\end{lemma}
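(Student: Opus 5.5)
The plan is to work locally over affine opens and show that both $\widetilde{\mcM}$ and $N$ are computed by the same integral closure inside $K(\mcM)$. As in the proof of Lemma \ref{RelNormMap}, take $\Spec(A) \subset X$, an affine $\Spec(R) \subset Y$ mapping into it, and $\Spec(B) = f^{-1}(\Spec(A))$. Then $\mcM \times_X Y$ is locally $\Spec(B \otimes_A R)$, and $\widetilde{\mcM}$ is locally $\Spec(R')$, where $R'$ is the integral closure of $R$ in $K(\mcM)$. Normalization is local, and the maps $h$ and $j$ are compatible with restriction. So it suffices to identify the normalization of $B \otimes_A R$ with $R'$, compatibly with $j$, and then glue.

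\textbf{Step 1 (the generic point of the fiber product).} Since $\pi$ is birational, $K(A) = K(R)$. Hence
\[
(B \otimes_A R) \otimes_R K(R) \cong B \otimes_A K(A).
\]
Since $B$ is a domain finite over $A$, this is the localization of $B$ at $A \setminus \{0\}$, which is the field $K(B) = K(\mcM)$. Thus $\mcM \times_X Y$ has a unique irreducible component dominating $Y$, that component is generically reduced, and its function field is $K(\mcM)$. The image $C$ of $B \otimes_A R \to K(\mcM)$, given by $b \otimes r \mapsto br$, is the coordinate ring of this component with its reduced structure. In the proof of Lemma \ref{RelNormMap} we already saw $R \subset C \subset R'$.

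\textbf{Step 2 (integral closures agree).} $C$ is generated by $R$ together with $B$. Every element of $B$ is integral over $A \subset R$, so $C$ is integral over $R$. Consequently the integral closure of $C$ in its fraction field $K(\mcM)$ equals the integral closure of $R$ in $K(\mcM)$, namely $R'$. Therefore the normalization of the dominant component is $\Spec(R')$. The map $j$, which is induced by the inclusions $R \subset C \subset R'$ in $K(\mcM)$, is the identity on $\Spec(R')$. Gluing over the cover gives the isomorphism.

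\textbf{Main obstacle.} The delicate point is the meaning of the normalization $N$ when $\mcM \times_X Y$ is reducible or non-reduced. Over the exceptional locus of $\pi$ there could a priori be further irreducible components that do not dominate $Y$, and these would contribute extra components to $N$. My first approach is to read $N$ as the normalization of the reduced dominant component, that is, of the scheme-theoretic closure of the generic fiber $\Spec K(\mcM)$. The argument above is exactly the statement for this scheme, and it is the one relevant to the later use of $\widetilde{\mcM}$.

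If the statement is meant for the full normalization of $(\mcM \times_X Y)_{\mathrm{red}}$, one would additionally need that every irreducible component dominates $Y$, or equivalently dominates $\mcM$. I would try to obtain this in the application from flatness of $f$ (or openness, for instance when $X$ is normal), since base change of a flat finite map to $Y$ sends generic points to generic points. Without such a hypothesis, the statement should be stated for the dominant component.
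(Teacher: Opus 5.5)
Your proof is correct, and it takes a different route from the paper. The paper argues formally. The map $j$ is finite because $g = p_2 \circ n \circ j$ is finite, and $p_1$ is birational ``by base change'' from $\pi$. Hence $j$ is birational, and a finite birational morphism between normal varieties is an isomorphism. You instead compute on affine charts. The generic fibre of $p_2$ is $\Spec(B \otimes_A K(A)) = \Spec K(\mcM)$, so exactly one component dominates $Y$. Its reduced coordinate ring is the subring $C \subset K(\mcM)$ generated by $R$ and $B$, and transitivity of integrality identifies the integral closure of $C$ with $R'$. The underlying algebra is the same: ``finite birational onto a normal variety is an isomorphism'' is the statement that integral closure in a fixed field does not change under an integral extension. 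Your Step 1, however, supplies the justification that the paper's phrase ``by base change $p_1$ must be birational'' leaves out.

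The caveat you raise is genuine, and it affects the paper's argument too, since a base change of a birational morphism along a finite one need not be birational. For example:
\begin{itemize}
\item let $\mcM = \A^2$, and let $X$ be $\A^2$ with two points $a \neq b$ glued together;
\item let $Y$ be the blowup of $X$ at the glued point, which is the blowup of $\A^2$ at $\{a,b\}$, with exceptional curves $E_a, E_b$;
\item then $\widetilde{\mcM} = Y$, but $\mcM \times_X Y$ has the two extra irreducible components $\{a\} \times E_b$ and $\{b\} \times E_a$;
\item so $j$ is not surjective onto the normalization of $(\mcM \times_X Y)_{\mathrm{red}}$.
\end{itemize}

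Some hypothesis is therefore needed. The right one is that the base change $p_2$ be open, i.e.\ that $f$ be \emph{universally} open; openness of $f$ alone does not pass to $p_2$. In the application this holds. Here $X = \ol{\co}_p$ is normal and $f$ is the quotient of $\mcM$ by the finite group $A(\co)$. In characteristic zero, taking invariants under a finite group commutes with base change (via the Reynolds operator). So $p_2$ is again a finite-group quotient and hence open; alternatively, use Chevalley's criterion for universal openness over a normal base. Every component then dominates $Y$, so by your Step 1 the reduced fibre product is integral. Your argument then proves the lemma in the form used in Section~\ref{s:Springer}.
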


\begin{proof}

We have a commutative diagram
% https://q.uiver.app/#q=WzAsNixbNSw2LCJcXG1jTSJdLFs1LDAsIlkiXSxbMywzLCJcXG1jTSBcXHRpbWVzX1ggWSJdLFs3LDMsIlgiXSxbMiwzLCJOIl0sWzAsMywiXFx3aWRldGlsZGV7XFxtY019Il0sWzIsMSwicF8yIl0sWzEsMywiXFxwaSJdLFswLDMsImYiXSxbMiwwLCJwXzEiXSxbNCwyLCJuIl0sWzUsNCwiaiJdLFs1LDEsImciXSxbNSwwLCJcXHZhcnBoaSJdXQ==
\begin{equation*}
\begin{tikzcd}
	&&&&& Y && \\
	\\
	\\
	{\widetilde{\mcM}} && N & {\mcM \times_X Y} &&&& X \\
	\\
	\\
	&&&&& \mcM
	\arrow["\pi", from=1-6, to=4-8]
	\arrow["g", from=4-1, to=1-6]
	\arrow["j", from=4-1, to=4-3]
	\arrow["\varphi", from=4-1, to=7-6]
	\arrow["n", from=4-3, to=4-4]
	\arrow["{p_2}", from=4-4, to=1-6]
	\arrow["{p_1}", from=4-4, to=7-6]
	\arrow["f", from=7-6, to=4-8]
\end{tikzcd}
\end{equation*}

Since $g$ is finite, $j$ must be finite as well. Since $\pi$ is birational, by base change $p_1$ must be birational as well. Since $\varphi$, $n$, and $p_1$ are all birational, $j$ must be birational as well. Thus it is a finite birational map between normal varieties, so it is an isomorphism.  
\end{proof}

\begin{remark}

We briefly describe one case of particular interest. Let $X$ be a conical affine symplectic singularity. It follows from \cite[Theorem 1]{Bra} that $\pi_1(X_{\reg})$ is finite. Thus, we can let $R$ be the ring of functions of the universal cover of $X_{\reg}$, and $\mcM := \Spec(R) \to X$ be the finite map induced by $R(X) \cong R(X_{\reg}) \hookrightarrow R$. Furthermore, let $\pi: Y \to X$ be a crepant (partial) resolution. A natural question is whether the resulting space $\widetilde{\mcM}$ has rational singularities. We show in Section \ref{s:Springer} that this is the case when $X$ is a nilpotent orbit closure in $\fs \fl_n$ and $\pi: T^*(G/P) \to X$ is a particular symplectic resolution.  

\end{remark}

\section{Rational singularities and roots of determinants} \label{s:Det}

In this section we use birational geometry to study the singularities of a variety $Y$ that can be obtained by adding a $k$th root of the determinant to an affine space $X$ given by block diagonal matrices. This will be essential in the next section because we will construct a space $\widetilde{\mcM}$ whose geometry will be closely related to the singularities of $Y$. 

Let $n_1, \dots, n_{\ell}$ be positive integers, and for each $i$, let $M_i \cong \A^{n_i^2}$ be the affine space of $n_i$-by-$n_i$ matrices. 
Let $X = M_1 \times \dots \times M_{\ell}$.  Let $f_i \in R(M_i)$ be the determinant function, and let $f =  f_1 f_2 \dots f_{\ell} \in R(X)$. Finally, let $R = R(X)[z]/(z^k - f)$ and $Y = \Spec(R)$.

\begin{theorem} \label{RRat}

$Y$ has rational singularities. 

\end{theorem}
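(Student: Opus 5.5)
The plan is to show that $Y$ is normal, Gorenstein (indeed a hypersurface), and has canonical singularities. Since canonical Gorenstein singularities are rational (Elkik), this gives the theorem.

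\emph{Normality.} $Y$ is the hypersurface $z^k = f$ in $X \times \A^1$, so it is Cohen--Macaulay and Gorenstein, and $K_Y$ is Cartier. By Serre's criterion it then suffices to check $R_1$. The map $Y \to X$ is étale over $\{f \neq 0\}$. Each $f_i$ is irreducible, so $\{f=0\}$ is the union of the irreducible divisors $D_i = \{f_i = 0\}$. At a generic point of $D_i$ the other $f_j$ are units, and $f_i$ is part of a regular system of parameters, since $\det$ has nonzero differential at rank $n_i-1$ matrices. Locally $Y$ looks like $z^k = u f_i$ with $u$ a unit, and extracting a $k$-th root of $u$ étale-locally turns this into $z'^k = f_i$, which is smooth. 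Hence $Y$ is smooth in codimension one, and therefore normal. (Irreducibility also holds, because $f$ is not a $d$-th power for any $d>1$; each $f_i$ appears to the first power.)

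\emph{Canonicity.} The main step is to show that $Y$ has canonical singularities. I would use the cyclic-cover/pair formalism. The map $\pi: Y \to X$ is a $\mu_k$-cyclic cover branched along $D = \sum D_i$ with ramification index $k$ over each $D_i$. This gives
\[
K_Y = \pi^*\!\left(K_X + \tfrac{k-1}{k} D\right),
\]
and by the standard comparison (Kollár--Mori, Prop.~5.20), $Y$ is klt (respectively canonical) if and only if the pair $(X, \frac{k-1}{k}D)$ is klt (respectively satisfies the corresponding condition on discrepancies of divisors pulled back). Since $Y$ is Gorenstein, klt already implies canonical, which in turn implies rational. So it suffices to show that $(X, \frac{k-1}{k}D)$ is klt.

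\emph{Log canonical threshold.} Since $X$ is smooth, the pair is klt if and only if $\frac{k-1}{k} < \mathrm{lct}(X, D)$. Because $X$ is a product and $D = \sum \mathrm{pr}_i^* D_i$, the lct of the product is the minimum of the factor lcts. (Alternatively, use a product of log resolutions, where discrepancies add.) So I reduce to a single determinant hypersurface $\{\det = 0\} \subset M_{m}$. Its lct is $1$. This can be seen from the known computation $\mathrm{lct}(\det) = 1$ via the Bernstein--Sato polynomial $b(s) = (s+1)(s+2)\cdots(s+m)$, whose largest root is $-1$. More geometrically, one can use the $GL_m\times GL_m$-equivariant log resolution obtained by successively blowing up the rank $\le r$ loci: the exceptional divisor $E_r$ over the rank-$\le r$ locus has discrepancy $(m-r)^2-1$ and $\det$ vanishes to order $m-r$ along it, giving the log discrepancy ratio $(m-r)^2/(m-r) \ge 1$. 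Hence $\frac{k-1}{k} < 1 = \mathrm{lct}$, the pair is klt, and therefore $Y$ is klt. Being Gorenstein, $Y$ is canonical, and thus has rational singularities.

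\emph{Main obstacle.} I expect the hardest part to be making the cyclic-cover step rigorous. One must verify that discrepancies of $Y$ are controlled by those of the pair, which needs care because $D$ is not snc and the cover is taken globally via $z^k=f$ rather than via a line bundle root. I would handle this by passing to the equivariant log resolution $\widetilde X \to X$ of each determinant locus, normalizing $Y \times_X \widetilde X$, and computing discrepancies there. Alternatively, I would apply Kollár--Mori 5.20 directly to the finite morphism $Y \to X$, which requires only normality and the ramification formula above.
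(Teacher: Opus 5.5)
Your proposal is correct and follows essentially the same route as the paper: normality of the hypersurface $z^k=f$ via Serre's criterion, then the ramification formula $K_Y=\pi^*(K_X+\frac{k-1}{k}D)$ combined with Koll\'ar--Mori 5.20 to reduce to klt-ness of $(X,\frac{k-1}{k}D)$, and finally the product of the iterated blowups of rank loci, where the exceptional divisors have discrepancy $(m-r)^2-1$ and $\det$ vanishes to order $m-r$. The only difference is packaging: the paper computes the pair discrepancies $(m-r)^2-1-\frac{k-1}{k}(m-r)$ directly by induction over the blowups, whereas you phrase the same computation as the bound $\frac{k-1}{k}<\mathrm{lct}(\det)=1$, with the Bernstein--Sato polynomial as an optional shortcut.
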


\begin{proof}

We are going to show that $Y$ has canonical singularities (see \cite[Definition 2.34]{KM}). Note that $R$ is a local complete intersection ring, which implies that $Y$ is Gorenstein and thus that $Y$ is Cohen-Macaulay and $K_{Y}$ is a Cartier divisor. Moreover, $Y$ is $R_1$ because the singular locus of $Y$ is given by the simultaneous vanishing of $f$ and the differential $Df$, which is a locus of codimension at least 2.  Hence, $Y$ is normal since it is $R_1$ and Cohen-Macaulay. Thus by \cite[Corollary 5.24]{KM}, if $R$ has canonical singularities, it also has rational singularities. Since $K_{Y}$ is Cartier, to show that $Y$ has canonical singularities we only need to show that $\discrep(Y) \geq 0$.

Let $\pi: Y \to X$ be given by the inclusion $R(X) \hookrightarrow R$. This map is a cyclic cover with ramification divisor given by $D := V(f)$. Let $E \subset Y$ be the divisor lying over $D$. Because $\pi$ is an order $k$ cyclic cover we have 
$$
\pi^*D = kE \hspace{10pt} \text{and} \hspace{10pt} K_{Y} = \pi^* K_{X} + (k - 1)E,
$$
where the second equation follows from the ramification formula (see e.g~\cite[Proposition 20.2]{Kol}). In particular, the $\Q$-divisor $\Delta_X := \frac{k - 1}{k} D$ satisfies $\pi^*(K_{X} + \Delta_X) = K_{Y}$. Thus, by \cite[Proposition 5.20(3)]{KM}, we have 
\begin{equation} \label{discrep>}
\discrep(Y) \geq \discrep(X, \Delta_X).
\end{equation}

Since $K_Y$ is Cartier, $\discrep(Y)$ is an integer, and thus it is sufficient to show that the pair $(X, \Delta_X)$ is klt. Furthermore, since $0 \leq \frac{k - 1}{k} < 1$, we can check that $(X, \Delta_X)$ is klt by checking that for a fixed log resolution $\pi_X: \tilde{X} \to X$, all the discrepancies of the exceptional divisors are greater than -1 (see \cite[Theorem 2.44, Proposition 2.41]{KM}). In fact, we shall see that for the log resolution described below, all of the discrepancies turn out to be greater than 1. Given a log resolution $g: \tilde{Z} \to Z$ of a pair $(Z, \Delta)$ and an irreducible exceptional divisor $E \subset \tilde{Z}$, we denote by $a(E, Z, \Delta)$ the discrepancy of $E$ with respect to $(Z, \Delta)$.

We will use a log resolution of $(X, \Delta_X)$ that is given by taking a product of log resolutions of each $(M_i, V(f_i))$ that we describe below. These log resolutions and their relevant properties are summarized succinctly in \cite[Theorem 2.5]{KMN}; for proofs or further details see \cite[Chapter 4]{Joh} or \cite{Vai}. The log resolution, $\pi_i: \tilde{M}_i \to M_i$, is given as an iterated blowup for $j = 0, \dots n_i - 2$, where $A_0 := M_i$ and $b_{j + 1}: A_{j + 1} \to A_j$ is the blowup of $A_j$ at the strict transform $Z_j$ of the closed subvariety given by matrices of rank at most $j$. It is shown in the proof of \cite[Theorem 4.4]{Joh} that each $Z_j$ is smooth. Thus, each of these blowups has a single exceptional divisor, whose strict transform in $\tilde{M}_i := A_{n_i - 1}$ we call $E_{ij}$. It is automatic that 
$$
\pi_X := \pi_1 \times \dots \times \pi_{\ell}: \tilde{X} := \tilde{M}_1 \times \dots \times \tilde{M}_{\ell} \to M_1 \times \dots \times M_{\ell}
$$
is a log resolution of $(X, \Delta_X)$ and that each exceptional divisor is of the form 
$$
E^i_{j} = \tilde{M}_1 \times \dots \times \tilde{M}_{i - 1} \times E_{ij} \times \tilde{M}_{i + 1} \times \dots \times \tilde{M}_{\ell}.
$$
Finally $\Delta_X = \Delta_1 + \dots + \Delta_{\ell}$ where $\Delta_i := \frac{k - 1}{k} V(f_i)$, and for each $E^i_{j}$ we have 
$$
a(E^i_{j}, X, \Delta_X) = a(E_{ij}, M_i, \Delta_i).
$$

In order to compute the discrepancy at each $E_{ij}$ in the log resolution $\pi_i: \tilde{M}_i \to M_i$ of $(M_i, \Delta_i)$, we apply \cite[Lemma 2.29]{KM} inductively to each blow up. In order to simplify the notation, for this computation we drop the $i$ subscripts. In particular, $M$ consists of $n$-by-$n$ matrices, and $\Delta = \frac{k - 1}{k} D$, where $f$ is the determinant function and $D = V(f)$. We denote by $D_j$ the strict transform of $D$ in $A_j$, and by $\Delta^j = \frac{k - 1}{k} D_j$ the strict transform of $\Delta$ in $A_j$. We denote by $E_j$ the exceptional divisor of $b_{j + 1}: A_{j + 1} \to A_j$. Following the notation in the proof of \cite[Theorem 4.4]{Joh}, we use the same notation for the strict transform of $E_j$ in all the $A_k$, for $k > j + 1$. 

Let $j \in \{0, \dots, n - 1\}$. Since for $j \geq 1$, $\{E_0, \dots, E_{j - 1}\}$ are the exceptional divisors of the composition of blowups $\rho_j: A_j \to M$, we have the equation 
\begin{equation} \label{Indj}
K_{A_j} + \Delta^j = \rho_j^* (K_M + \Delta) + \sum_{k = 0}^{j - 1} a_k E_k.
\end{equation}
We shall prove by induction that for each $j \in \{0, \dots, n - 1\}$, every $a_k$ in \eqref{Indj} is greater than 1. The base case of $j = 0$ is automatic. For the inductive step, fix some $j \in \{0, \dots, n - 1\}$ and assume \eqref{Indj} holds with $a_0, \dots, a_{j - 1}$ all greater than 1. Then we can apply \cite[Lemma 2.29]{KM} to the blowup $A_{j + 1}$ of $Z_j$ to obtain the equation 
\begin{equation} \label{Blj}
K_{A_{j + 1}} + \Delta^{j + 1} - \sum_{k = 0}^{j - 1} a_k E_k = b_{j + 1}^*(K_{A_j} + \Delta^j - \sum_{k = 0}^{j - 1} a_k E_k) + a_j E_j
\end{equation}
where 
$$
a_j := a(E_j, A_j, \Delta^j  - \sum_{k = 0}^{j - 1} a_k E_k) = c - 1 - \tfrac{k - 1}{k} \mult_{Z_j} D_j  + \sum_{k = 0}^{j - 1} a_k \mult_{Z_j} E_k.
$$
with $c$ the codimension of $Z_j$ in $A_j$. Note that $c = (n - j)^2$, and since the generic point of $Z_j$ is not contained in $E_k$ for $k < j$, we have $\mult_{Z_j} E_k = 0$. Additionally, it follows from \cite[Corollary 4.5]{Joh} that $\mult_{Z_j} D_j = n - j$. Combining these facts, we obtain the equation
\begin{align}
a_j &= (n - j)^2 - 1 - \tfrac{k - 1}{k}(n - j) \\
&= (n - j)(n - j - \tfrac{k - 1}{k}) - 1 > 2*1 - 1 = 1.
\end{align}
Combining the inductive hypothesis \eqref{Indj} with the equation \eqref{Blj} we obtain 
\begin{equation} \label{Indj+1}
K_{A_{j + 1}} + \Delta^{j + 1} = b_{j + 1}^*(K_{A_j} + \Delta^j - \sum_{k = 0}^{j - 1} a_k E_k) + \sum_{k = 0}^{j} a_k E_k = \rho_{j + 1}^* (K_M + \Delta) + \sum_{k = 0}^{j} a_k E_k,
\end{equation}
where all the $a_j$ are greater than 1. Thus our induction is complete, and for $j = n - 1$ we obtain
$$
K_{\tilde{M}} + \tilde{\Delta} = \pi_X^* (K_M + \Delta) + \sum_{k = 0}^{n - 2} a_k E_k
$$
with $\tilde{\Delta} := \Delta^{n - 2}$ and with all the $a_j$ are greater than 1. Thus we have shown that all the discrepancies of $\pi_X: Y \to X$ are greater than 1. As explained above, this implies that $(X, \Delta_X)$ is klt, and thus by \eqref{discrep>}, $Y$ has canonical and thus rational singularities. 
\end{proof}

\begin{remark}

Note that despite the fact that all the discrepancies of exceptional divisors of $\pi_X: \tilde{X} \to X$ are strictly greater than 1, the contributions of the strict transform of $\Delta_X$ can cause $\discrep(X, \Delta_X)$ to be negative. However, in the case where $l = 1$ (i.e. $R$ is given by adjoining a root of a single determinant function), $D = V(f)$ is a prime divisor and thus its strict transform in $\tilde{X}$ is smooth. Therefore, it follows from \cite[Corollary 2.32(2)]{KM} that $\discrep(X, \Delta_X) = \frac1k$, which implies that $(X, \Delta_X)$ and $Y$ have terminal singularities.

\end{remark}

\section{Construction of the Extended Springer Resolution} \label{s:Springer}

Let $P \subset G$ be a parabolic subgroup of a connected complex reductive algebraic group, with Lie algebra $\fp \subset \fg$. There is a unique nilpotent orbit $\mcO_P$ whose 
intersection with the nilradical $\fu$ of $\fp$ is dense in $\fu$. This orbit is called the Richardson orbit of $P$, and a nilpotent orbit is Richardson if it is the Richardson orbit of some parabolic subgroup. 
We will say that $P$ is a Richardson parabolic for $\co$ (the term polarization is often used for such $P$). 
There is a natural Springer map 
\begin{equation} \label{PSpr}
T^*(G/P) \cong G \times^P \fu \to \overline{\mcO_P},
\end{equation}
which depends only on the conjugacy class of $P$. In general, this map is proper, and finite over $\mcO_P$. The degree of the map over $\mcO_P$ is sometimes called the Hesselink number of the (conjugacy class of the) parabolic $P$, because these numbers were studied and computed in classical type in \cite{He}. A nilpotent orbit is ``strongly Richardson'' if it is the Richardson orbit of a parabolic with Hesselink number one, i.e., if its closure is resolved by a Springer map \eqref{PSpr}.  It is shown in \cite{Fu} that every symplectic resolution of (the normalization of) a nilpotent orbit closure is of the form \eqref{PSpr}. 

In type $A$ the situation simplifies significantly because every nilpotent orbit is strongly Richardson. Thus, let $G = SL_n$ and let $\mcO_{p} \subset \fs \fl_{n}$ be the nilpotent orbit with Jordan blocks of size $p = (p_1, \dots, p_t)$, where \(p_i\geq p_{i+1}\). Additionally, let $p'$ be the conjugate partition to $p$. 
Let $B$ denote the lower triangular ``negative" Borel subgroup of $G$ and \(H\subset B\) the diagonal Cartan subgroup.
Let $P \supset B$ be a standard parabolic subgroup of $G$ with  Levi subgroup $L\supset H$.
It is a fact that $\mcO_{p}$ is the Richardson orbit of $P$ if and only if the sizes of the
blocks of $L$, read from left to right, are a permutation of the entries of $p'$ (see \cite[Theorem 3.3]{He}).  

For the remainder of this section, fix $P\supset B$ and its Levi part $L\supset H$ so that the sizes of the Levi blocks are non-increasing from the upper left to the lower right in $SL_n$:
in other words they are the entries of $p'$, in order. Let $U$ be the unipotent radical of $P$, so $P=LU$ and $\fp = \fl + \fu$.

Let $k$ be the greatest common divisor of the parts of the partition $p$, so by definition, $k$ divides the multiplicity of each entry in $p'$. By definition, $p_1$ is the number of entries in $p'$, i.e., the number of blocks of $L$. Thus, we have 
$$
L \cong S \left( \prod_{i = 1}^{p_1} GL_{p'_i} \right) .
$$
Let $V_p \subset \fu$ be the vector space generated by root spaces $\fg_{\alpha}$ such that when $\alpha$ is written as a sum of negative simple roots, at most one of them is not in $L$. Note that $\fu$ is a representation of $L$ under the adjoint action, and as an $L$-module, it decomposes as $\fu \cong V_p \oplus [\fu,\fu]$. Hence \(V_p\cong\fu/[\fu,\fu]\) as \(L\)-representations.

Explicitly, $L$ is the subgroup of block diagonal matrices of $G$ consisting of  $p_1$ blocks, where the $i$-th block is square of size $p'_{i}$.  The space $V_p$ consists of the matrices whose nonzero entries are in the $p_1 - 1$ blocks immediately below the diagonal blocks.  Thus, $V_p \cong M_1 \times \cdots \times  M_{p_1 - 1}$, where $M_i$ is the space of $p'_{i + 1}$-by-$p'_{i}$ matrices. In particular, $M_i$ consists of square matrices whenever $i \not\equiv 0$ mod $k$. We can write the adjoint action of \(L\) on \(V_p\) in terms of the block decompositions described above as 
$$
(A_1, A_2, \dots, A_{p_1}) \cdot (B_1, B_2, \dots, B_{p_1 - 1}) = (A_2 B_1 A_1^{-1}, \dots, A_{p_1} B_{p_1 - 1} A_{p_1 - 1}^{-1}).
$$

Let $e \in V_p$ consist of the matrices such that each block $B_i$ of $e$ has entries $(B_i)_{rs} = \delta_{rs}$.  In other words, each $B_i$ is either the identity, or it consists of a left block that is the identity and a right block that is zero. Note that the Jordan form of $e$ has partition type $p$, and thus $e \in \mcO_p$. Additionally,  $L \cdot e \subset V_p$ is the set of matrices such that each $B_i$ has maximal rank.  Indeed, the equation 
$$
(A_1, A_2, \dots, A_{p_1}) \cdot e = (B_1, B_2, \dots, B_{p_1 - 1})
$$
can be solved whenever all the $B_i$ have maximal rank: let $A_{p_1}$ be any invertible matrix and let the other $A_i$ be chosen inductively such that 
$A_{i + 1} [\delta_{rs}] = B_i A_i$.

We shall describe the stabilizer $L^e$ below. It is a straightforward computation that 
$$
A = (A_1, \dots A_{p_1}) \in L^e
$$
is equivalent to $A_i = A_{i + 1}$ if $p'_i = p'_{i + 1}$ and $A_i$ has block form
\begin{equation} \label{e:blocks1}
A_i = 
\begin{bmatrix}
    A_{i + 1}   & 0 \\
    B   & C \\
\end{bmatrix}
\end{equation}
if $p'_i > p'_{i + 1}$. This implies that $A_1$ determines all the other $A_i$. We can write $A_1$ in the form 
\begin{equation} \label{e:blocks2}
A_1 = 
\begin{bmatrix}
    C_1       & 0 & 0 & 0 & \dots & 0 \\
    B_{21}       & C_{2} & 0 & 0 & \dots & 0 \\
   B_{31}   & B_{32}    & C_{3} & 0 & \dots & 0 \\
    \hdotsfor{6} \\
    B_{\ell 1}   & B_{\ell 2} & B_{\ell 3} & \dots & B_{\ell - 1, \ell} & C_{\ell}
\end{bmatrix}
\end{equation}
where $\ell$ is the number of distinct parts of the partition $p$, and $C_i$ is a square matrix of size given by the multiplicity $m_i$ of the $i$th largest distinct entry of $p$. The only other condition on $A$ is that its determinant is one, which does not impose any additional conditions on the $B_{ij}$ matrices above. Note the condition that $\det(A) = \det(A_1 A_2 \dots A_{p_1}) = 1$ is precisely 
$$
\det(C_1^{u_1} C_2^{u_2} \dots C_{\ell}^{u_{\ell}} ) = \det(C_1)^{u_1} \det(C_2)^{u_2}  \dots \det(C_{\ell})^{u_{\ell}}  = 1
$$
where $u_1 \dots, u_{\ell}$ are the ${\ell}$ distinct entries in $p$. In summary,
\begin{equation} \label{Le}
L^e \cong \A^m \times \{C_i \in M_{m_i \times m_i} | \det(C_1)^{u_1} \det(C_2)^{u_2}  \dots \det(C_l)^{u_{\ell}}  = 1 \}.
\end{equation}
This is a disconnected group with component group $\Z/k \Z$ and identity component 
\begin{equation} \label{Le0}
L^e_0 \cong \A^m \times \{C_i \in M_{m_i \times m_i} | \det(C_1)^{\frac{u_1}{k}} \det(C_2)^{\frac{u_2}{k}}  \dots \det(C_{\ell})^{\frac{u_{\ell}}{k}}  = 1 \}.
\end{equation}
(The equation on the right hand side defines a connected group, since it is a fiber of $\beta \circ \alpha$, where
$\alpha (C_1, \ldots, C_{\ell}) =  (\det C_1, \ldots, \det C_{\ell})$ and $\beta(z_1, \ldots, z_\ell) = z^{\frac{u_1}{k}} \dots z{\frac{u_{\ell}}{k}}$.  Since
the fibers of the determinant map are connected, so are the fibers of $\alpha$, and the fibers of $\beta$ are isomorphic to $\C^*$, as the $u_i/k$ have
greatest common divisor $1$.)
We shall now explicitly construct an $L$-equivariant cover $X_p$ of $V_p$ such that stabilizer of a lift $e_p$ of $e$ is $L^e_0$.

Let $T$ be the diagonal maximal torus in $SL_k$, and choose the positive system of roots of $SL_k$ such that the positive root spaces are upper triangular matrices. Let $\V_{ad} \subset \fs \fl_{k}$ be the subspace given by the $-\ga_i$ root spaces, where $\ga_1, \ldots, \ga_{k-1}$ are the simple positive roots. This construction comes from \cite[Section 3.2]{Gra2022}, 
although in this paper the sign convention differs because $\V_{ad}$ consists of the $-\ga_i$ root spaces instead of the $\ga_i$ root spaces.  Note that $T$ acts on $\V_{ad}$ by the adjoint 
action, and we can identify $R(\V_{ad})$ with the subring $\C[x_1, \ldots, x_{n-1}]$
of $R(T)$, where $x_i = e^{-\alpha_i}$.  

We define a group homomorphism $\rho: L \to T$ by 
\begin{equation} \label{rhoDef}
(A_1, \dots A_{p_1}) \mapsto \left( \prod_{i \equiv 1 \hspace{-5pt} \mod k} \det A_i, \prod_{i \equiv 2 \hspace{-5pt} \mod k} \det A_i, \dots, \prod_{i \equiv k \hspace{-5pt} \mod k} \det A_i \right).
\end{equation}
Similarly, we define $f: V_p \to \V_{ad}$ by 
\begin{equation} \label{fDef}
(B_1, \dots B_{p_1 - 1}) \mapsto \left( \prod_{i \equiv 1 \hspace{-5pt} \mod k} \det B_i, \prod_{i \equiv 2 \hspace{-5pt} \mod k} \det B_i, \dots, \prod_{i \equiv k - 1 \hspace{-5pt} \mod k} \det B_i \right).
\end{equation}
By construction, these maps intertwine the adjoint action of $L$ on $V_p$ with the adjoint action of $T$ on $\V_{ad}$. In other words, if we let $L$ act on $\V_{ad}$ via $\rho$, then $f$ is $L$-equivariant. 

In \cite[Section 3]{Gra2022}, a finite $T$-equivariant cover $\pi: \V \to \V_{ad}$ is constructed, such that $\V$ is a toric variety for $T$. Hence $L$ acts on $\V$ via $\rho$, and thus $L$ acts on the fiber product
$$ 
X_p := V_p \times_{\V_{ad}} \V
$$ 
componentwise, ensuring that the projection maps $\pi_1: X_p \to V_p$ and $\pi_2: X_p \to \V$ are $L$-equivariant. 

\begin{lemma} \label{teStab}

The stabilizer of any $e_p \in \pi_1^{-1}(e) \subset X_p$ is $L_0^e$.

\end{lemma}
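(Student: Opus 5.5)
Since $X_p = V_p \times_{\V_{ad}} \V$ and $L$ acts componentwise, the stabilizer of $e_p = (e, v)$ with $v = \pi_2(e_p)\in\V$ is
\[
L^{e_p} = L^e \cap \rho^{-1}(T^{v}),
\]
where $T^v$ is the stabilizer of $v$ in $T$. The plan is to compute both sides explicitly and compare with \eqref{Le0}.

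The first step is to identify $v$ and $T^v$. Each block of $e$ has entries $\delta_{rs}$, so the square blocks have determinant $1$. Hence $f(e) = (1,\dots,1)$, a point of the open $T$-orbit in $\V_{ad}$; the non-square blocks (indices $\equiv 0$ mod $k$) do not enter $f$. Recall from \cite[Section 3]{Gra2022} that $\V$ is the toric variety for $T$ whose coordinate ring is generated by the monomials $e^{\mu}$ with $\mu$ minuscule, integral over $\C[x_1,\ldots,x_{k-1}]$. Its open orbit is $T$ itself, mapping to the open orbit $T/Z(SL_k)$ of $\V_{ad}$. So the fiber $\pi^{-1}(1,\dots,1)$ lies in the open orbit, where $T$ acts freely, and $T^v = \{1\}$. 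Thus $L^{e_p} = L^e\cap\ker\rho$, independent of the choice of $e_p$ in $\pi_1^{-1}(e)$. I would double-check freeness by noting that $R(\V)$ contains the characters $e^{\lambda_1},\dots$, which generate the character lattice of $T$, so the stabilizer of a point in the open orbit is trivial.

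The second step is to compute $\rho$ on $L^e$ using \eqref{e:blocks1} and \eqref{e:blocks2}. For $A\in L^e$, the block $A_i$ is lower block triangular with diagonal blocks $C_j$ for those distinct parts $u_j$ of $p$ with $u_j\ge i$. Hence $\det A_i = \prod_{j:\,u_j\ge i}\det C_j$. Each $u_j$ is divisible by $k$, so as $i$ runs over a fixed residue class $r$ mod $k$, the number of $i\le u_j$ in that class is exactly $u_j/k$. Therefore every coordinate of $\rho(A)$ equals
\[
c := \prod_j \det(C_j)^{u_j/k}.
\]
So $\rho(A) = c\cdot I$ is a scalar, and $c^k = \prod_j\det(C_j)^{u_j} = \det A = 1$. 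Consequently $\rho(A)=1$ if and only if $c=1$, which is exactly the defining condition in \eqref{Le0}. This gives $L^{e_p}=L^e_0$.

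The main obstacle is the first step: correctly recalling the structure of $\V$ from \cite{Gra2022} and checking that points over $(1,\dots,1)$ have trivial $T$-stabilizer rather than stabilizer $Z(SL_k)$. If $\V$ were instead the toric variety for $T/Z$, the argument would fail. So I would verify, via the weights generating $R(\V)$, that $\V$ is a $k$-fold cover on open orbits, matching the $k = |L^e/L^e_0|$ components. The counting in the second step is routine.
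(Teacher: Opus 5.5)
Your proposal is correct and follows essentially the same route as the paper. The paper also reduces to $L^{e_p}=L^e\cap\ker\rho$, using that $v\in\pi^{-1}(1,\dots,1)$ lies in the open $T$-orbit of $\V$, where $T$ acts freely; it then computes $\prod_{i\equiv j}\det A_i=\prod_r\det(C_r)^{u_r/k}$ on $L^e$ and compares with \eqref{Le0}. Your residue-class count and your check that the characters in $R(\V)$ generate the full weight lattice of $T$ only spell out details the paper leaves implicit. (Note the paper's sign convention, under which $R(\V)$ has basis the $e^{-\bar\mu_j}$ over $R(\V_{ad})$.)
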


\begin{proof}

Since $\V$ is constructed as a toric variety for $T$, we have $T \subset \V$. Thus, any point in the open $T$-orbit of $\V$ has trivial stabilizer in $T$, so its stabilizer in $L$ is $\ker(\rho)$. Note that $e_p = (e, v)$ for some $v \in \pi^{-1}(1, \dots, 1)$. Thus, $v$ is in the open $T$-orbit of $\V$ and we have 
$$
L^{e_p} = L^e \cap \ker(\rho).
$$
By Definition \eqref{rhoDef}, $\ker(\rho)$ is given by 
$$
\{(A_1, \dots A_{p_1}) \in L \  | \ \prod_{i \equiv j} \det A_i = 1 \text{ for } j = 1, \dots, k\}.
$$
If  $(A_1, \dots A_{p_1})  \in L^e$, then in the notation of \eqref{Le}, for each $j = 1, \dots, k$ we have 
\begin{equation} \label{e:prod}
\prod_{i \equiv j} \det A_i = \det(C_1)^{\frac{u_1}{k}} \det(C_2)^{\frac{u_2}{k}}  \dots \det(C_l)^{\frac{u_{\ell}}{k}}.
\end{equation}
Thus by \eqref{Le0} we are done. 
\end{proof}

\begin{remark} \label{r:center}
The center $Z_k$ of $SL_k$ is isomorphic to $\Z_k$, and consists of the matrices of the form $\zeta I$, where $\zeta$ is a $k$-th
root of unity.  The homomorphism $\rho$ takes the stabilizer $L^e$ to $Z_k$; precisely, $\rho(A_1, \ldots, A_{p_1}) = \zeta I$, where
$\zeta$ is given by the right hand side of \eqref{e:prod}.  This induces an isomorphism $L^e/L^e_0 \cong Z_k$.
\end{remark}

\begin{example} \label{example:SL(12)-1}

Consider the partition $p = (6,  3^2)$ and the orbit $\mcO_p \subset \fs \fl_{12}$. The conjugate partition to $p$ is $p' = (3^3, 1^3)$, so $L$ is the Levi 
subgroup of $G$ with 3 blocks of size 3 in the upper left and 3 blocks of size 1 in the lower right.  Additionally, $V_p$ consists of two $3 \times 3$ matrices, one $3 \times 1$ matrix, and two 
$1 \times 1$ matrices. The stabilizer $L^e$ consists of all elements of $L$ of the form $\ell = (A, A, A, a, a, a)$ such that 
$$
A = 
\begin{bmatrix}
    a   & 0 \\
    B   & C \\
\end{bmatrix}
$$
where $B$ is a $1 \times 2$ matrix and $C$ is a $2 \times 2$ matrix. The condition that $\det \ell = 1$ imposes the condition that $a^6 \det(C)^3 = 1$. Thus, we have 
$$
L^e \cong \{(a, B, C) \in GL_1 \times \A^2 \times GL_2 \ | \ a^6 \det(C)^3 = 1\}.
$$
The identity component $L^e_0$ is given by matrices of the same form, but subject to the stronger condition that $a^2 \det(C) = 1$. Thus, we have
$$
L^e_0 \cong \{(a, B, C) \in GL_1 \times \A^2 \times GL_2 \ | \ a^2 \det(C) = 1\}.
$$
Finally, the greatest common divisor $k$ of $p$ is $3$, and $\rho: L \to T$ and $f: V_p \to \V_{ad}$ are given respectively by
$$
(A_1, A_2, A_3, a_4, a_5, a_6) \mapsto (\det(A_1) a_4, \det(A_2) a_5, \det(A_3) a_6) 
$$
$$
(B_1, B_2, B_3, b_4, b_5) \mapsto (\det(B_1) b_4, \det(B_2) b_5).
$$
Using the notation from above, we see that for $\ell= (A, A, A, a, a, a) \in L^e$, 
we have $\rho(\ell) = (a^2 \det(C), a^2 \det(C), a^2 \det(C))$, and thus $\ker(\rho) \cap L^e = L^e_0$. 

\end{example}

In order to study the singularities of $X_p$ we shall study a finite cover of it. Following \cite[Proposition 3.5]{Gra2022}, there is a finite quotient map $\V_k \to \V$, where $\V_k = \Spec(\C[w_1, \dots, w_{k - 1}])$, and the composition $\V_k \to \V_{ad}$ corresponds to the map $R(\V_{ad}) \to R(\V)$ defined by $x_i \mapsto w_i^k$. Let $Y_p := V_p \times_{\V_{ad}} \V_k$ so that we have the following diagram, where both squares are Cartesian:
% https://q.uiver.app/#q=WzAsNixbMCwwLCJZX3AiXSxbMiwwLCJYX3AiXSxbNCwwLCJWX3AiXSxbNCwyLCJcXFZfe2FkfSJdLFsyLDIsIlxcViJdLFswLDIsIlxcVl9rIl0sWzAsMSwicSJdLFsxLDIsIlxccGlfMSJdLFsyLDMsImYiLDJdLFs1LDRdLFs0LDMsIlxccGkiXSxbMSw0LCJcXHBpXzIiLDJdLFswLDVdXQ==
\begin{equation} \label{YXVp}
\begin{tikzcd}
	{Y_p} && {X_p} && {V_p} \\
	\\
	{\V_k} && \V && {\V_{ad}}
	\arrow["q", from=1-1, to=1-3]
	\arrow[from=1-1, to=3-1]
	\arrow["{\pi_1}", from=1-3, to=1-5]
	\arrow["{\pi_2}"', from=1-3, to=3-3]
	\arrow["f"', from=1-5, to=3-5]
	\arrow[from=3-1, to=3-3]
	\arrow["\pi", from=3-3, to=3-5]
\end{tikzcd}
\end{equation}

\begin{corollary} \label{YRat}

$Y_p$ has rational singularities. 

\end{corollary}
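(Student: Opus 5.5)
Since $\V_k = \Spec(\C[w_1,\dots,w_{k-1}])$ and $\V_k \to \V_{ad}$ is given by $x_i \mapsto w_i^k$, the plan is to compute $Y_p$ explicitly and identify it with a product of an affine space and a variety of the type in Theorem \ref{RRat}. By definition,
\[
R(Y_p) = R(V_p)[w_1,\dots,w_{k-1}]\big/\bigl(w_j^k - f_j,\ j=1,\dots,k-1\bigr),
\]
where $f_j = \prod_{i \equiv j \bmod k} \det B_i$ is the $j$-th component of $f$ from \eqref{fDef}. The key point is that the $f_j$ involve pairwise disjoint sets of the coordinate blocks $B_i$. Moreover, each $B_i$ appearing in some $f_j$ has $i \not\equiv 0 \bmod k$, and so is a square matrix by the remark above.

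Split $V_p = V^{(0)} \times V^{(1)} \times \cdots \times V^{(k-1)}$. Here $V^{(0)}$ is the product of the (possibly non-square) blocks $M_i$ with $i \equiv 0 \bmod k$, and $V^{(j)}$ is the product of the square blocks $M_i$ with $i \equiv j \bmod k$. Then
\[
Y_p \cong V^{(0)} \times \prod_{j=1}^{k-1} Y^{(j)}, \qquad Y^{(j)} = \Spec\bigl(R(V^{(j)})[w_j]/(w_j^k - f_j)\bigr).
\]
By Theorem \ref{RRat}, applied with $X = V^{(j)}$ (a product of spaces of square matrices) and $f = f_j$, each $Y^{(j)}$ has rational singularities. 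The factor $V^{(0)}$ is an affine space, hence smooth.

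One edge case needs care: if for some $j$ there is no block with $i \equiv j$, then $f_j = 1$. This cannot actually happen, since $p_1 - 1 \ge k-1$: the number of blocks $p_1$ is a multiple of $k$ when $k>1$, because each part multiplicity of $p'$ is divisible by $k$. Even so, $\Spec \C[w]/(w^k-1)$ would just be $k$ reduced points, so this case would also be harmless.

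The main remaining step is the standard fact that a product of varieties with rational singularities has rational singularities. To prove it, take resolutions $\tilde Y^{(j)} \to Y^{(j)}$ and form their product, which is a resolution of the product. The product is normal, being a product of normal varieties over $\C$. By the Künneth formula, $R^i$ of the product map is the tensor product of the individual higher direct images, so it vanishes for $i>0$ and equals $\mcO$ for $i=0$. This gives the corollary.

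I expect no serious obstacle. The only points needing a check are the bookkeeping that the variables in different $f_j$ are disjoint and square, and the Künneth argument for products, which can be cited.
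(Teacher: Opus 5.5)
Your proposal is correct and follows the same route as the paper: you write $R(Y_p)=R(V_p)[w_1,\dots,w_{k-1}]/(w_j^k-f_j)$, split $Y_p$ as an affine space times varieties of the type in Theorem \ref{RRat}, and apply that theorem to each factor. Beyond what the paper writes, you make explicit the points its short proof leaves implicit: that the $f_j$ involve disjoint sets of square blocks, that every residue class $j\in\{1,\dots,k-1\}$ actually occurs, and the K\"unneth argument showing that a product of varieties with rational singularities again has rational singularities.
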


\begin{proof} 

By definition $Y_p$ is an affine scheme whose ring of functions is 
$$
R \cong R(V_p) [z_1, \dots, z_{k - 1}]/(w_1^k - f_1, \dots, w_{k - 1}^k - f_{k - 1}),
$$
where $f_j$ is the product of determinants of the components $B_i$ of $V_p$ with $i \equiv j \mod k$. Thus, $Y_p$ is simply the product of an affine space with varieties of the form in Theorem \ref{RRat}. Thus, by Theorem \ref{RRat}, we are done.
\end{proof}

\begin{corollary} \label{XRat}

$X_p$ has rational singularities. 

\end{corollary}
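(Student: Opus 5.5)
The plan is to deduce rationality of $X_p$ from Corollary \ref{YRat} via the finite map $q: Y_p \to X_p$ in diagram \eqref{YXVp}. By \cite[Proposition 3.5]{Gra2022}, $\V_k \to \V$ is a quotient by a finite group $\Gamma$ (a subgroup of $(\Z/k\Z)^{k-1}$) acting linearly on $\V_k = \A^{k-1}$. Since $V_p \to \V_{ad}$ is $\Gamma$-invariant, $\Gamma$ acts on $Y_p = V_p \times_{\V_{ad}} \V_k$ through the second factor. So the first step is to show that $q$ identifies $X_p$ with $Y_p/\Gamma$.

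On rings this is the statement
\[
\bigl(R(V_p)\otimes_{R(\V_{ad})} R(\V_k)\bigr)^\Gamma = R(V_p)\otimes_{R(\V_{ad})} R(\V_k)^\Gamma = R(V_p)\otimes_{R(\V_{ad})} R(\V).
\]
This holds because taking invariants under a finite group in characteristic $0$ is exact: the Reynolds operator is $R(\V_{ad})$-linear, so invariants commute with the base change $R(V_p)\otimes_{R(\V_{ad})} -$.

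Next I will invoke Boutot's theorem: a good quotient (in particular a quotient by a finite group) of a variety with rational singularities has rational singularities. Combined with Corollary \ref{YRat}, this gives that $X_p$ has rational singularities.

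The main obstacle is a foundational point. Boutot's theorem needs $Y_p$ to be a variety (reduced), and strictly a normal variety; the conclusion also requires $X_p$ to be normal. Both are covered by the argument. Corollary \ref{YRat} already asserts that $Y_p$ has rational singularities, and the proof of Theorem \ref{RRat} showed normality, since $Y_p$ is a product of an affine space and normal complete intersections. Once $Y_p$ is normal, $X_p = Y_p/\Gamma$ is normal as an invariant ring of a normal domain. Irreducibility of $Y_p$ holds because each factor $z^k=f$ is irreducible, as $f$ is not a $d$-th power for any $d>1$ dividing $k$; in any case normality of each connected component suffices.

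An alternative route avoids Boutot. Since $q$ is finite and surjective between normal varieties in characteristic $0$, the trace map splits $\mathcal O_{X_p} \to q_*\mathcal O_{Y_p}$. By Kovács's criterion (or \cite[Theorem 5.22]{KM}-type statements), a variety that is a direct summand of one with rational singularities has rational singularities. I would present the Boutot argument and mention this one as a remark.
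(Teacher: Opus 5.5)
Your proposal is correct and follows essentially the same route as the paper. Both arguments realize $q\colon Y_p \to X_p$ as the quotient by the finite group acting on $\V_k$, deduce normality of $X_p$ from that of $Y_p$ (invariants of a normal domain), and then apply a ``finite quotients of rational singularities are rational'' result. The paper invokes \cite[Proposition 5.13]{KM} for finite morphisms between normal varieties, whose proof is the trace-splitting argument you list as an alternative, rather than Boutot's theorem; your Reynolds-operator argument supplies the justification, only asserted in the paper, that taking $\Gamma$-invariants commutes with base change along $V_p \to \V_{ad}$.
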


\begin{proof} 

By Corollary \ref{YRat}, $Y_p$ is normal. Since \eqref{YXVp} is Cartesian and $\V_k \to \V$ is a quotient by a finite group, $q: Y_p \to X_p$ is as well. Thus, by \cite[Proposition 6.4.1]{BH}, $X_p$ is normal. Since $q$ is finite, $X_p$ is normal, and $Y_p$ has rational singularities, \cite[Proposition 5.13]{KM} implies that $X_p$ has rational singularities as well. 
\end{proof}

\begin{corollary} \label{XInt}

$\pi_1: X_p \to V_p$ is the normalization of $V_p$ in $K(L/L_0^e)$.

\end{corollary}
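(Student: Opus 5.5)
We need to show that $X_p$ is normal, that $\pi_1$ is finite, and that $X_p$ is irreducible with function field $K(L/L^e_0)$. The normalization of $V_p$ in a given field is characterized by these properties: normal, integral over $V_p$, and with the prescribed function field. Here $L/L^e_0$ is viewed as a $k$-fold cover of the open orbit $L\cdot e\cong L/L^e$, which is dense in $V_p$.

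\emph{Normality and finiteness.} Normality is already contained in Corollary \ref{XRat}, since rational singularities include normality. Finiteness of $\pi_1$ comes from base change: $\pi:\V\to\V_{ad}$ is finite (it is the finite toric cover of \cite[Section 3]{Gra2022}), and $\pi_1$ is its base change along $f$. So $\pi_1$ is finite, and in particular integral.

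\emph{Irreducibility.} A normal affine scheme is a disjoint union of normal integral schemes. To show $X_p$ is integral it therefore suffices to show $X_p$ is connected. I would get this from the cover $q:Y_p\to X_p$, which is surjective. From the presentation in Corollary \ref{YRat}, $Y_p$ is a product of an affine space with the varieties $Y$ of Theorem \ref{RRat}, and each of these is integral. Indeed, $z^k-f$ is irreducible in $R(X)[z]$, because $f$ is a product of distinct irreducible determinants each appearing to the first power, hence $f$ is not a $d$-th power for any $d>1$. Thus $Y_p$ is irreducible, and so is its image $X_p$.

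\emph{Identifying the function field.} Let $e_p\in\pi_1^{-1}(e)$. The orbit $L\cdot e_p$ maps onto the dense orbit $L\cdot e\subset V_p$. By Lemma \ref{teStab}, $L\cdot e_p\cong L/L^e_0$, which has dimension $\dim V_p$, so $L\cdot e_p$ is a locally closed subvariety of $X_p$ of full dimension. Since $X_p$ is irreducible, $L\cdot e_p$ is open and dense, and therefore $K(X_p)=K(L\cdot e_p)=K(L/L^e_0)$. This identification is compatible with the inclusion $K(V_p)=K(L/L^e)\subset K(L/L^e_0)$, because $\pi_1$ is $L$-equivariant and sends $e_p\mapsto e$. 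By the characterization of normalization recalled in Section \ref{GenPic}, $\pi_1$ is then the normalization of $V_p$ in $K(L/L^e_0)$.

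\emph{Main obstacle.} The delicate step is irreducibility, namely ruling out that $X_p$ splits into several components, for instance a component for each point of $\pi^{-1}(1,\dots,1)$. The surjection from the irreducible $Y_p$ settles this. To double-check, one can count degrees. The degree of $\pi_1$ equals the degree of $\pi$, which is $k$ by \cite{Gra2022}, since the toric cover corresponds to $T\to T/Z_k$. On the other hand, $L/L^e_0\to L/L^e$ also has degree $k$, because $L^e/L^e_0\cong\Z/k\Z$ by Remark \ref{r:center}. So the single orbit $L\cdot e_p$ exhausts the generic fiber.
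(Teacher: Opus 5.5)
Your proof is correct, but you establish irreducibility of $X_p$ by a different route than the paper.

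\textbf{The paper's route.} It works over the open orbit. The preimage $N=\pi_1^{-1}(L/L^e)$ is a degree-$k$ finite \'etale cover of $L/L^e$, because $\pi$ is \'etale of degree $k$ off the coordinate hyperplanes. $N$ contains the degree-$k$ \'etale cover $L\cdot e_p\cong L/L^e_0$, so $N=L\cdot e_p$. Since $N$ is dense, $X_p$ is irreducible.

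\textbf{Your route.} You pull irreducibility down from $Y_p$ along the surjection $q$, which is the base change of the finite quotient $\V_k\to\V$. This uses that each factor $\Spec R(X)[z]/(z^k-f)$ is integral. You then get density of $L\cdot e_p$ from a dimension count.

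\textbf{What each buys.} Your argument avoids two points the paper uses tacitly. One is the \'etaleness of $\pi$ away from the hyperplanes. The other is the density of $N$ in $X_p$, which needs that no component of $X_p$ lies over $V_p\setminus L\cdot e$; this follows, e.g., from flatness of $\pi$, since $R(\V)$ is free over $R(\V_{ad})$. The cost is that you rely on the explicit presentation of $R(Y_p)$ and on irreducibility of $w_j^k-f_j$.

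For that irreducibility, the cleanest justification is Eisenstein's criterion at any single determinant factor $\det B_i$ of $f_j$, which divides $f_j$ exactly once. Your ``not a $d$-th power'' criterion is also valid here, since the fraction field contains $\C$ and hence the $k$-th roots of unity.

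The paper's argument gives the extra information that $\pi_1^{-1}(L\cdot e)$ is a single $L$-orbit. Your closing degree count observes exactly this, but only as a side check rather than as part of the proof.
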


\begin{proof}
We first show that $X_p$ is irreducible.  First, $X_p$ is normal since it has rational singularities by Corollary \ref{XRat}.
Since $L/L^e$ is a dense open subset of $V_p$, the preimage $N := \pi_1^{-1}(L/L^e)$ is a dense open subset of $X_p$. By Lemma \ref{teStab}, $N$ contains $L \cdot e_p \cong L/L^{e_p} \cong L/L^e_0$.  It follows from \cite[Section 3]{Gra2022} that $\pi: \V \to \V_{ad}$ is a degree $k$ finite map, and that it is \'etale over the complement of the coordinate hyperplanes given by $x_i = 0$. This implies that $\pi_1: X_p \to V_p$ is also finite of degree $k$, and that $N$ is \'etale over $L/L^e \subset V_p$. Thus, $N$ and $L \cdot e_p$ are both degree $k$ \'etale covers of $L/L^e$, and $L \cdot e_p \subset N$, so $N = L \cdot e_p \cong L/L^e_0$. Since $N \cong L/L^e_0$ is an irreducible dense open subset of $X_p$, $X_p$ is irreducible.  It is also reduced since it is normal, so $X_p$ is integral and $K(X_p) = K(L/L^e_0)$. 

Since $\pi_1: X_p \to V_p$ is finite, $X_p$ is normal, and $K(X_p) = K(L/L^e_0)$, $X_p$ is indeed the normalization of $V_p$ in $K(L/L_0^e)$.
\end{proof}

Returning to the global picture, let $\widetilde{\mcO}_p$ be the universal cover of $\mcO_p$. We have natural $G$-equivariant maps
\begin{equation} \label{univCover}
\mcM := \Spec R(\widetilde{\mcO}_p) \to \Spec R(\mcO_p) \to \ol{\mcO}_p,
\end{equation}
where the first map is induced by the inclusion $R(\mcO_p) \hookrightarrow R(\widetilde{\mcO}_p)$ and the second is the normalization. It follows from \cite{KP} that 
the second map is an isomorphism as we are in type $A$ (outside of type $A$ this is not always true).  We can also view $\mcM$ as the normalization of $\ol{\mcO}_p$ in $K(\widetilde{\mcO}_p) = K(\mcM)$, and this description  extends to all types (see \cite{Na1}).  Since normalization is a local construction, this implies that $\widetilde{\mcO}_p$ is the preimage of $\mcO_p$ under \eqref{univCover}. Additionally, $\mcM$ and the analogous spaces in other types have symplectic singularities (see \cite[Lemma 2.5]{Lo21}), and were studied from this perspective in \cite{Na1} 
\cite{Na2}, \cite{Mat}.

The natural projection $\fu \to \fu/[\fu,\fu] \cong V_p$ is $P$-equivariant  if we inflate the $L$-action on $V_p$ to a $P$ action by letting $U$ act trivially. We can also inflate the action of $L$ on $X_p$ to a $P$-action, so $\pi_1: X_p \to V_p$ is $P$-equivariant. Thus, we have a $P$-variety
$$
\tilde{\fu} := X_p \times_{V_p} \fu.
$$
Observe that as a space, $\tilde{\fu} = X_p \times [\fu, \fu]$, but the fiber product definition is used to endow $\tilde{\fu}$ with a $P$-action.
Since $X_p$ is irreducible, so is $\tilde{\fu}$.  Let $\tilde{e} = (e_p, 1) \in \tilde{\fu}$.
We have a $G$-equivariant map
$$
\tilde{\eta}: \widetilde{\mcM} := G \times^P \tilde{\fu} \to G \times^P \fu \cong T^*(G/P).
$$

\begin{corollary} \label{l:Mrational}
$\widetilde{\mcM}$ has rational singularities.
\end{corollary}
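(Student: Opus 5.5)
The plan is to reduce the statement to Corollary \ref{XRat}, using the fact that having rational singularities is a local property that is stable under smooth base change and under taking products with smooth varieties.

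First, $\tilde{\fu} = X_p \times_{V_p} \fu$ is, as a variety, isomorphic to $X_p \times [\fu,\fu]$. This holds because the $L$-equivariant splitting $\fu \cong V_p \oplus [\fu,\fu]$ realizes $\fu \to V_p$ as the projection from a product. The splitting is only $L$-equivariant, not $P$-equivariant, but rationality of singularities does not involve the group action. Since $[\fu,\fu]$ is an affine space, $\tilde{\fu}$ is the product of $X_p$ with a smooth variety. Taking a resolution of $X_p$ and multiplying it by $[\fu,\fu]$ gives a resolution of $\tilde{\fu}$. By flat base change and the Künneth formula, the higher direct images still vanish and $\tilde{\fu}$ remains normal. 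Hence $\tilde{\fu}$ has rational singularities by Corollary \ref{XRat}.

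Next, pass from $\tilde{\fu}$ to the associated bundle $\widetilde{\mcM} = G \times^P \tilde{\fu}$. The quotient map $G \to G/P$ is a locally trivial principal $P$-bundle in the Zariski topology, because the big cell $U^-P$ and its $G$-translates give local sections. So $G/P$ is covered by opens $V$ over which $\widetilde{\mcM}|_V \cong V \times \tilde{\fu}$, with $V$ smooth (indeed an affine space). Each such open is again a product of $\tilde{\fu}$ with a smooth variety, so it has rational singularities by the previous paragraph. Since the property is local on $\widetilde{\mcM}$, the corollary follows.

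If one prefers to avoid Zariski local triviality, there is an alternative. The map $G \times \tilde{\fu} \to G \times^P \tilde{\fu}$ is a smooth surjective morphism (a $P$-torsor), and $G \times \tilde{\fu}$ has rational singularities. Rational singularities descend along smooth surjections, since normality and Cohen--Macaulayness with $\omega$ equal to the Grauert--Riemenschneider sheaf descend faithfully flatly. The only step needing care is this product/base-change compatibility, which is standard. I expect no real obstacle: the substantive input is Corollary \ref{XRat}, which is already proved.
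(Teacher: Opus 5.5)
Your proposal is correct and follows essentially the same route as the paper: the paper also observes that $\widetilde{\mcM} \to G/P$ is a locally trivial bundle with fiber $\tilde{\fu} \cong X_p \times [\fu,\fu]$, so $\widetilde{\mcM}$ is covered by opens of the form $X_p \times S$ with $S$ smooth, and then concludes from Corollary \ref{XRat}. You supply routine details the paper leaves implicit, namely Zariski local triviality via translates of the big cell and the stability of rational singularities under products with smooth varieties; your alternative argument by descent along the $P$-torsor $G \times \tilde{\fu} \to \widetilde{\mcM}$ is also valid but not needed.
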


\begin{proof}
The map $\widetilde{\mcM} = G \times^P \widetilde{\fu} \to G/P$ is a locally trivial fiber bundle with fibers isomorphic
to $\widetilde{\fu}$.  Hence, $\widetilde{\mcM}$ is covered by open sets of the form $X_p \times S$, where $S$ is a smooth variety.
Since $X_p$ has rational singularities by Corollary \ref{XRat}, so does $\widetilde{\mcM}$.
\end{proof}

\begin{lemma} \label{tMNorm}

$\widetilde{\mcM}$ is the normalization of $T^*(G/P)$ in $K(\mcM)$.  

\end{lemma}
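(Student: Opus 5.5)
We verify the defining properties of the normalization of $T^*(G/P)$ in $K(\mcM)$ recalled in Section \ref{GenPic}: $\widetilde{\mcM}$ is a normal integral scheme with $K(\widetilde{\mcM}) = K(\mcM)$, and $\tilde\eta$ is an integral (here finite) morphism extending $\Spec K(\mcM) \to T^*(G/P)$.

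\emph{Normality and finiteness.} Normality follows from Corollary \ref{l:Mrational}. For finiteness, note that $\tilde\eta$ is obtained from the $P$-equivariant map $\tilde{\fu} = X_p \times_{V_p} \fu \to \fu$ by applying $G\times^P(-)$. That map is the base change of the finite map $\pi_1$, so it is finite. Finiteness is local on the base, and over an open set $U' \subset G/P$ trivializing both bundles the map is $U' \times \tilde{\fu} \to U' \times \fu$. Hence $\tilde\eta$ is finite. Irreducibility of $\widetilde{\mcM}$ follows from that of $\tilde{\fu}$ together with the local triviality over the irreducible base $G/P$.

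\emph{Function field.} This is the key step. I would locate the open $G$-orbit $G\cdot[1,\tilde e] \subset \widetilde{\mcM}$ and show that it is isomorphic to $\widetilde{\co}_p = G/G^e_0$, with $\tilde e = (e_p,e)$ in the fiber product description.

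First, $P\cdot e$ is dense in $\fu$; indeed $P\cdot e = \fu\cap\co_p$ is open because $P$ is a Richardson parabolic. Its preimage in $\tilde{\fu}$ is dense, since $\tilde{\fu} = X_p\times[\fu,\fu]$ and $\pi_1$ is finite and surjective. Over $L\cdot e \subset V_p$, the proof of Corollary \ref{XInt} shows that $\pi_1^{-1}(L\cdot e) = L\cdot e_p$ is a single orbit. Using this, I would check that the preimage of $P\cdot e$ is the single orbit $P\cdot\tilde e$.

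The main obstacle is computing the stabilizer $P^{\tilde e}$ and proving $P^{\tilde e} = G^e_0$. Since $T^*(G/P)\to\overline{\co}_p$ is a resolution, we have $G^e = P^e$. Then $P^{\tilde e} = P^e\cap \ker(\rho\circ \mathrm{pr}_L)$, because $U$ acts trivially on $X_p$. I would compare this with $L^e_0$ via Lemma \ref{teStab}. The point to establish is that the image of $P^e = G^e$ in $L$ is $L^e$, or at least meets every component of it, and that $\rho$ restricted to $G^e$ induces the isomorphism $G^e/G^e_0\cong \Z/k$ of Remark \ref{r:center}. To do this, I would use that $G^e\to L$ (projection $P\to L$) maps into $L^{\bar e}$, where $\bar e$ is the image of $e$ in $V_p$, and compare component groups: both are $\Z/k$ (standard for $SL_n$). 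The kernel of $G^e\to G^e/G^e_0$ is connected and contained in $\ker\rho$, while $\rho$ surjects onto $Z_k$, since scalar matrices $\zeta^{1/\dots}$ in the center of $G$ realize every class. Combining these gives $P^{\tilde e}=G^e_0$.

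From $P^{\tilde e}=G^e_0$ it follows that $G\cdot[1,\tilde e]\cong G/G^e_0=\widetilde{\co}_p$ is dense open in $\widetilde{\mcM}$, so $K(\widetilde{\mcM})=K(\widetilde{\co}_p)=K(\mcM)$. It remains to check that this identification is compatible with the map to $T^*(G/P)$, i.e.\ that the composite $\widetilde{\co}_p\to T^*(G/P)$ is the covering composed with the inverse of the birational resolution over $\co_p$. This holds by equivariance, since $\tilde e\mapsto [1,e]$. Together these properties identify $\widetilde{\mcM}$ with the normalization. Alternatively, by Lemma \ref{jIso} applied with $Y=T^*(G/P)$, $X=\overline{\co}_p$, and $\mcM$ as in \eqref{univCover}, the normalization is characterized by exactly these properties.
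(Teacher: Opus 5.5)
Your proposal is correct. Its overall structure matches the paper's proof:
\begin{itemize}
\item normality from Corollary \ref{l:Mrational};
\item integrality by local triviality over $G/P$;
\item density of the orbit of $[1,\tilde e]$ by irreducibility and a dimension count;
\item compatibility with $\Spec K(\mcM)\to T^*(G/P)$ by equivariance.
\end{itemize}

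The real difference is in the key step, the proof that $P^{\tilde e}=G^e_0$.

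The paper proves the structural decomposition $P^e=L^eU^e$. It uses that $\fu=V_p\oplus[\fu,\fu]$ as $L$-modules and that $u\cdot e-e\in[\fu,\fu]$ for $u\in U$. Since $U^e$ is connected, this gives $G^e_0=L^e_0U^e$. Then $P^{\tilde e}=P^{e_p}\cap P^e=(L^e_0U)\cap P^e=L^e_0U^e$ by Lemma \ref{teStab}.

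You argue by counting instead, with $\phi=\rho\circ\mathrm{pr}_L$:
\begin{itemize}
\item $\phi$ maps $G^e$ into $\rho(L^e)\subset Z_k$, a finite group, so it kills the connected group $G^e_0$.
\item $\phi$ is surjective onto $Z_k$, because $\rho(\zeta I_n)=\zeta^{n/k}I_k$: each residue class of block indices mod $k$ has total size $n/k$.
\item Since $|G^e/G^e_0|=k$ for $SL_n$, the induced map $G^e/G^e_0\to Z_k$ is an isomorphism, so $\ker\phi=G^e_0$.
\end{itemize}
This argument is valid and shorter. Your preliminary concern about whether $\mathrm{pr}_L(G^e)$ meets every component of $L^e$ is unnecessary, since surjectivity via the center already does that job.

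What your route costs is an imported fact: $|A(\co_p)|=\gcd(p)$. The paper's argument is self-contained and in fact derives $A(\co)\cong L^e/L^e_0\cong\Z/k$ as a byproduct. Its explicit description $G^e_0=L^e_0U^e$ is also reused later in the paper:
\begin{itemize}
\item the identification $A(\co)\cong L^e/L^e_0$ in Section \ref{s:Gmod} rests on it;
\item so does the choice of the reductive part $L^e_{red}$ of $P^e_0$ in Proposition \ref{p:independent}.
\end{itemize}

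A minor point: your closing remark overstates Lemma \ref{jIso}. That lemma identifies the normalization with $N(\mcM\times_X Y)$; it does not characterize it. The characterization you actually use is the defining property from Section \ref{GenPic}.
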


\begin{proof}
We must check that
$\widetilde{\mcM}$ is normal,  $K(\widetilde{\mcM}) = K(\mcM)$, and that
the morphism $\eta: \widetilde{\mcM} := G \times^P \tilde{\fu} \to  T^*(G/P)$ is integral and extends
the natural morphism $\Spec K(\widetilde{\mcM}) \to T^*(G/P)$.

First, $\widetilde{\mcM}$ is normal because it has rational singularities.
To show $K(\widetilde{\mcM}) = K(\mcM)$, we first show the equality of stabilizer groups
$G^{[1, \tilde{e}] }= G^e_0$.  Since $T^*(G/P) \to \ol{\mcO}_p$ is birational, the stabilizer $G^e$ is the same as the stabilizer $G^{[1 , e]}$ of $[1 , e] \in T^*(G/P)$, which is $P^e$.  Next, we claim that the stabilizer $P^e$ decomposes as $L^e U^e$.  Clearly $P^e \supset L^e U^e$.  For the reverse
inclusion, suppose $p = \ell u \in P^e$ with $\ell \in L$, $u \in U$.  Since $u e = e+e'$ where $e' \in [\fu, \fu]$, we have
$$
e = \ell u e = \ell(e + e') = \ell e + \ell e'.
$$
As $L$-modules, $\fu = V_p \oplus [\fu, \fu]$, so $\ell e \in V_p$ and $\ell e' \in [\fu, \fu]$.  We conclude that $e' = 0$ so $u \in U^e$, and
$e = \ell e$ so $\ell \in L^e$.  Hence $p \in L^e U^e$, proving the reverse inclusion.  We conclude that $P^e \supset L^e U^e$ as claimed.
Since $U^e$ is connected (being a unipotent group), we see that $G^e_0 = P^e_0 = L^e_0 U^e$.

On the other hand, $G^{[1 , \tilde{e}]} = P^{\tilde{e}} = P^{(e_p, e)} = P^{e_p} \cap P^e$.  Since $U$ acts trivially on $X_p$, we have
$P^{e_p} = L^{e_p} U = L^e_0 U$, where the last equality is by Lemma \ref{teStab}.  Hence $G^{[1, \tilde{e}] } = P^{e_p} \cap P^e = L^e_0 U^e = G^e_0$.

This equality of stabilizer groups implies that the $G$-orbit of $[1, \tilde{e}]$ is isomorphic to $\widetilde{\co} = G/G^e_0$.  Hence we have
$\widetilde{\co} \subset \widetilde{\mcM}$.  Since $\widetilde{\co}$ and $\widetilde{\mcM}$ have the same dimension, and $\widetilde{\mcM}$ is irreducible 
(since $\widetilde{\fu}$ is), $\widetilde{\co}$ is dense in $\widetilde{\mcM}$.  Since any orbit is open in its closure, $\widetilde{\co}$ is an open subset
of $\widetilde{\mcM}$.  Hence $K(\widetilde{\co}) = K(\widetilde{\mcM})$.  Since $\widetilde{\co}$ is also open in $\mcM$, $K(\widetilde{\co}) = K(\mcM)$.
Hence $K(\widetilde{\mcM}) = K(\mcM)$.

We now prove the assertions about $\eta$.  By construction, the morphism $X_p \to V_p$ is integral.  We can cover $G/P$ by open sets such that over each such open
set, the restriction of $\eta$ is the morphism $X_p \times S \to V_p \times S$, where the map is the identity on the second factor.
Hence $\eta$ is integral.
The fact that $\eta$ extends
the natural morphism $\Spec K(\widetilde{\mcM}) \to T^*(G/P)$ follows from the commutative diagram
$$
\begin{CD}
\widetilde{\mcM} @>{\eta}>> T^*(G/P) \\
@AAA @AAA \\
\widetilde{\co} @>>> \co \\
@AAA @AAA \\
\Spec K(\widetilde{\mcM}) @>>> \Spec K(\mcM).
\end{CD}
$$
\end{proof}

\begin{corollary}

There is a commutative diagram
% https://q.uiver.app/#q=WzAsNixbMiwwLCJcXHdpZGV0aWxkZXtcXG1jTX0gXFxjb25nIE4oXFxtY00gXFx0aW1lc197XFxvbHtcXG1jT31fcH0gVF4qKEcvUCkpIl0sWzQsMCwiVF4qKEcvUCkiXSxbNCwyLCJcXG9se1xcbWNPfV9wIl0sWzIsMiwiXFxtY00iXSxbMCwyLCJcXHRpbGRle1xcbWNPfV9wIFxcY29uZyBHL0deZV8wIl0sWzAsMCwiXFx0aWxkZXtcXG1jT31fcCBcXGNvbmcgRy9HXmVfMCJdLFsxLDIsIlxcbXUiLDJdLFswLDEsIlxcdGlsZGV7XFxldGF9Il0sWzMsMiwiXFxldGEiXSxbMCwzLCJcXHRpbGRle1xcbXV9IiwyXSxbNSwwLCIiLDIseyJzdHlsZSI6eyJ0YWlsIjp7Im5hbWUiOiJob29rIiwic2lkZSI6InRvcCJ9fX1dLFs0LDMsIiIsMCx7InN0eWxlIjp7InRhaWwiOnsibmFtZSI6Imhvb2siLCJzaWRlIjoidG9wIn19fV0sWzUsNCwiXFxjb25nIl1d
\[\begin{tikzcd}
	{\widetilde{\mcO}_p \cong G/G^e_0} && {\widetilde{\mcM} \cong N(\mcM \times_{\ol{\mcO}_p} T^*(G/P))} && {T^*(G/P)} \\
	\\
	{\widetilde{\mcO}_p \cong G/G^e_0} && \mcM && {\ol{\mcO}_p}
	\arrow[hook, from=1-1, to=1-3]
	\arrow["\cong", from=1-1, to=3-1]
	\arrow["{\tilde{\eta}}", from=1-3, to=1-5]
	\arrow["{\tilde{\mu}}"', from=1-3, to=3-3]
	\arrow["\mu"', from=1-5, to=3-5]
	\arrow[hook, from=3-1, to=3-3]
	\arrow["\eta", from=3-3, to=3-5]
\end{tikzcd}\]
where $\tilde{\mu}$ is proper and surjective. 

\end{corollary}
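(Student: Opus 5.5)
The plan is to obtain the diagram by specializing the general construction of Section \ref{GenPic}. Take $X = \ol{\mcO}_p$, $Y = T^*(G/P)$ with $\pi = \mu$ the Springer map, and $f = \eta : \mcM \to \ol{\mcO}_p$. The Springer map $\mu$ is proper and birational, since $\mcO_p$ is strongly Richardson for our choice of $P$. The map $\eta$ is finite and surjective: by \eqref{univCover} it is the normalization of $\ol{\mcO}_p$ in $K(\widetilde{\mcO}_p)$, composed with the isomorphism $\Spec R(\mcO_p)\cong \ol{\mcO}_p$ from \cite{KP}. So the hypotheses of Lemma \ref{RelNormMap} hold.

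By Lemma \ref{tMNorm}, $\widetilde{\mcM}$ is exactly the normalization of $T^*(G/P)$ in $K(\mcM)$, with $g = \tilde{\eta}$. Lemma \ref{RelNormMap} then produces $\tilde{\mu} := \varphi : \widetilde{\mcM} \to \mcM$ making the right square commute. Lemma \ref{jIso} gives the identification $\widetilde{\mcM} \cong N(\mcM \times_{\ol{\mcO}_p} T^*(G/P))$.

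For the left square, recall from the proof of Lemma \ref{tMNorm} that the orbit $G\cdot[1,\tilde e] \cong G/G^e_0$ is open in $\widetilde{\mcM}$, and $\widetilde{\mcO}_p$ is open in $\mcM$ as the preimage of $\mcO_p$. Every map in sight is $G$-equivariant. This is clear for $\tilde\eta$, $\mu$ and $\eta$; for $\tilde\mu$ it holds because $\tilde\mu$ is induced by inclusions of function fields stable under $G$ and agrees with the equivariant maps generically. So $\tilde{\mu}$ sends $[1,\tilde e]$ to a point of $\mcM$ lying over $e$. Since $\mu$ is an isomorphism over $\mcO_p$, this point lies in $\eta^{-1}(\mcO_p) = \widetilde{\mcO}_p$.

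One subtlety is that the image of $[1,\tilde e]$ might not be the chosen base point. I would handle this by replacing the identification $\widetilde{\mcO}_p\cong G/G^e_0$ on the bottom row by the one through that image point. Both stabilizers equal $G^e_0$, and $G$-equivariance then makes the restricted map $G/G^e_0\to G/G^e_0$ a $G$-map commuting with the projections to $\mcO_p$, hence an isomorphism. This gives the left square.

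It remains to show $\tilde\mu$ is proper and surjective. For properness, $\mu\circ\tilde\eta = \eta\circ\tilde\mu$, $\tilde\eta$ is finite (Lemma \ref{tMNorm}), and $\mu$ is proper, so $\eta\circ\tilde\mu$ is proper. Since $\eta$ is finite, hence separated, $\tilde\mu$ is proper by the standard cancellation property.

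Surjectivity is the step I expect to need the most care. Its image is closed because $\tilde\mu$ is proper, and it contains the dense subset $\widetilde{\mcO}_p$. Moreover $\mcM$ is irreducible, being the normalization of an irreducible variety in a field. Hence the image is all of $\mcM$.
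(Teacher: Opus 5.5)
Your proof is correct. It shares the paper's skeleton: Lemma \ref{tMNorm} identifies $\widetilde{\mcM}$ as the normalization of $T^*(G/P)$ in $K(\mcM)$, Lemma \ref{RelNormMap} supplies $\tilde\mu$, and Lemma \ref{jIso} gives $\widetilde{\mcM}\cong N(\mcM\times_{\ol{\mcO}_p}T^*(G/P))$. Where you differ is in how you derive the remaining properties.

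The paper reads everything off the fiber-product model. There, $\tilde\mu$ is the finite surjective normalization map followed by the base change of the proper surjective map $\mu$, so it is proper and surjective. Moreover, over $\mcO_p$ the fiber product is just $\eta^{-1}(\mcO_p)=\widetilde{\mcO}_p$, which is already normal, because $\mu$ is an isomorphism there. Since fiber products and normalization are local, $\tilde\mu$ is therefore an isomorphism over $\widetilde{\mcO}_p$.

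You instead argue as follows.
\begin{itemize}
\item Properness: cancellation from $\eta\circ\tilde\mu=\mu\circ\tilde\eta$, using that $\eta$ is separated.
\item Surjectivity: the image of $\tilde\mu$ is closed and contains a dense open subset of the irreducible $\mcM$.
\item Left square: $G$-equivariance together with the open orbit $G\cdot[1,\tilde e]\cong G/G^e_0$ found in the proof of Lemma \ref{tMNorm}.
\end{itemize}

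Your version is more explicit about the base-point and equivariance bookkeeping, which the paper compresses into one sentence. The paper's version yields a slightly stronger conclusion: the entire preimage $\tilde\mu^{-1}(\widetilde{\mcO}_p)$ maps isomorphically onto $\widetilde{\mcO}_p$. You show this only for the open orbit $G\cdot[1,\tilde e]$, but that is all the diagram requires.

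A small point: to place $\tilde\mu([1,\tilde e])$ in $\widetilde{\mcO}_p$ you only need $\eta\tilde\mu([1,\tilde e])=\mu([1,e])=e$. The fact that $\mu$ is an isomorphism over $\mcO_p$ plays no role at that step.
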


\begin{proof}

By Lemma \ref{jIso}, Lemma \ref{tMNorm} implies that $\widetilde{\mcM} \cong N(\mcM \times_{\ol{\mcO}_p} T^*(G/P))$, with $\tilde{\eta}$ and $\tilde{\mu}$ above the natural maps. Thus, $\tilde{\eta}$ is proper and surjective. Additionally, since $\eta$ and $\mu$ are $G$-equivariant, $\tilde{\eta}$ and $\tilde{\mu}$ are as well. Finally, because the fiber product and normalization are local constructions, and $\mu$ is an equivalence over $\mcO_p$, $\tilde{\mu}$ must be an equivalence over $\widetilde{\mcO}_p \cong G/G^e_0$.
\end{proof}

\section{The $G$-module structure of $R(\mcM)$} \label{s:Gmod}
In this section we will show that as a $G$-module, $R(\mcM) = \sum_j \Ind_L^G(\mu_j)$, where $L$ is as in the previous section and the $\mu_j$ are
weights defined below (see Theorem \ref{t:rings}).  
We also give an alternative expression replacing the $\mu_j$ by 
the dominant weights $\lambda_{j n/k}$ to which they are $W$-conjugate: we define a standard Levi subgroup $M$ conjugate to 
$L$ such that $R(\mcM) = \sum_j \Ind_M^G(\lambda_{j n/k})$ (see Theorem \ref{t:rings-dom}).

We retain the notation of the previous section, so $G = SL_n$ with center $Z \cong \Z_n$, and $k$ is the greatest common divisor of the parts of $p$.
For the remainder of the paper, $P = LU$ will denote the parabolic subgroup of $G$ defined in the previous section, and $Q = M U_Q$ will denote
 a particular parabolic subgroup defined below.  We will write a general parabolic subgroup with a notation such as $P_1$ or $P'$.

Let $\bar{\lambda}_1, \ldots, \bar{\lambda}_{k-1}$ denote the fundamental dominant weights for $SL_k$ (cf.~\cite{Hum}), and let $\bar{\lambda}_0 = 0$.   For each $j$, let 
$\bar{\mu}_j$ denote the unique weight of $T$ in the coset of $\bar\lambda_j$ modulo the root lattice of \(SL_k\) such that when $\bar{\mu}_j$ is expressed as a sum of simple
roots for $SL_k$, each coefficient $c$ satisfies $0 \leq c < 1$.  Thus, $\bar{\mu}_0 = \bar{\lambda}_0 = 0$.
The weights $\bar{\lambda}_j$ and $\bar{\mu}_j$ are conjugate under
the Weyl group of $SL_k$ \cite[Proposition 5.9]{Gra2022}. Let $\lambda_1, \ldots, \lambda_{n-1}$ denote the fundamental weights for $G = SL_n$, and set $\lambda_0 = 0$.
 Let $e^{\mu_j}$ denote the character of $L$ defined by $e^{\mu_j} = \rho^* e^{ \bar{\mu}_j}$ (see \eqref{rhoDef} for the definition of $\rho$). The corresponding weight $\mu_j$ is $W$-conjugate
to $\lambda_{j n/k}$ for $j \neq 0$, and $\mu_0 = \lambda_0 = 0$.

\begin{example} \label{Ex:k=6}
Suppose $k = 6$ and $t = (t_1, \ldots, t_6) \in T$, with $\prod t_j = 1$.  Then
$e^{\bar{\lambda}_j}(t) = t_1 \cdots t_j$.
We have $\bar{\lambda}_j = \bar{\mu}_j$ for $j = 1, 5$,
and 
$$
e^{\bar{\mu}_2}(t) = t_1 t_4, \hspace{.2in} e^{\bar{\mu}_3}(t) = t_1 t_3 t_5, \hspace{.2in} e^{\bar{\mu}_4}(t) = t_1 t_2 t_4 t_5.
$$
Further examples can be found in  \cite[Example 3.4]{Gra2022} and \cite[Example 3.8]{GPR}.
\end{example}

If $\mu$ is a weight defining a character of $L$ which
is trivial on $L^e_0$, then
$e^{\mu}$ defines a character $\chi$ of $A(\co) \cong L^e/L^e_0 \cong G^e/G^e_0$.
In this case, we say $e^{\mu}$ (or simply $\mu$) is a lift of $\chi$
to $L$ (or $P$).  In particular, this holds for the weight $\mu_j$;
we denote by $\chi_j$ the corresponding character of $A(\co)$.

\begin{lemma} \label{l:component}
The set 
$\{ \chi_j \}$ is the set of 
characters of $A(\co)$.  
\end{lemma}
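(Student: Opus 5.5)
We reduce the statement to a computation inside the center $Z_k$ of $SL_k$, using Remark \ref{r:center}. By that remark, $\rho$ maps $L^e$ into $Z_k$ with kernel $L^e\cap\ker\rho = L^e_0$ (Lemma \ref{teStab}). It therefore induces an isomorphism $\bar\rho: A(\co)\cong L^e/L^e_0 \to Z_k \cong \Z/k\Z$. Since $e^{\mu_j} = \rho^* e^{\bar\mu_j}$, the character $\chi_j$ of $A(\co)$ is the pullback along $\bar\rho$ of the restriction $e^{\bar\mu_j}|_{Z_k}$. So it suffices to show that the restrictions $e^{\bar\mu_j}|_{Z_k}$, for $j = 0,\ldots,k-1$, are exactly the $k$ distinct characters of the cyclic group $Z_k$. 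As a side effect, this also shows that each $e^{\mu_j}$ is trivial on $L^e_0$, so that $\chi_j$ is well defined.

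Next we use that the restriction of a weight of $T$ to $Z_k$ depends only on its class modulo the root lattice of $SL_k$, because roots are trivial on the center. By definition, $\bar\mu_j$ lies in the coset of $\bar\lambda_j$. Hence $e^{\bar\mu_j}|_{Z_k} = e^{\bar\lambda_j}|_{Z_k}$. We then evaluate directly at $\zeta I$: since $e^{\bar\lambda_j}(t) = t_1\cdots t_j$, we get $e^{\bar\lambda_j}(\zeta I) = \zeta^j$.

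Fix a primitive $k$-th root of unity $\zeta$, which generates $Z_k$. The characters $\zeta I \mapsto \zeta^j$ for $j=0,\dots,k-1$ are pairwise distinct, and there are $k = |\widehat{Z_k}|$ of them. They therefore exhaust the character group of $Z_k$. Pulling back along the isomorphism $\bar\rho$ gives $\{\chi_j\} = \widehat{A(\co)}$, and the $\chi_j$ are distinct.

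The only delicate point is making sure that Remark \ref{r:center} really gives an \emph{isomorphism} $L^e/L^e_0\cong Z_k$, and not merely an injection. For this we need $\rho(L^e)$ to be all of $Z_k$. Surjectivity follows from \eqref{Le}: the quantity $\det(C_1)^{u_1/k}\cdots\det(C_\ell)^{u_\ell/k}$ can take any $k$-th root of unity as its value, subject to the constraint that its $k$-th power is $1$. This is because $\gcd(u_i/k)=1$, so the monomial map $(\det C_i)\mapsto\prod(\det C_i)^{u_i/k}$ is surjective onto $\C^*$. Alternatively, surjectivity follows by counting: the component group has order $k$ by \eqref{Le}–\eqref{Le0}, and $\rho$ is injective on $L^e/L^e_0$.
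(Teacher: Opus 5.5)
Your proof is correct and follows the same route as the paper's. In both, the isomorphism $L^e/L^e_0 \cong Z_k$ of Remark~\ref{r:center} reduces the claim to the fact that the $e^{\bar\mu_j}$ (equivalently the $e^{\bar\lambda_j}$, since they differ by elements of the root lattice) restrict to the $k$ distinct characters $\zeta I \mapsto \zeta^j$ of $Z_k$. You make explicit two steps the paper leaves implicit: this evaluation, and the surjectivity of $\rho|_{L^e}$ onto $Z_k$. For that surjectivity, divisibility of $\C^*$ already suffices, so the condition $\gcd(u_i/k)=1$ is not needed.
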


\begin{proof}
The restrictions of the $e^{\bar{\mu}_j}$ (or $e^{\bar{\lambda}_j}$) to the center $Z_k$ of $SL_k$ give the characters
of $Z_k$.  In light of Remark \ref{r:center}, this implies that the restrictions of the $e^{\mu_j}$ to $L^e$
yield the set of characters of the component group $G^e/G^e_0 \cong \Z_k$.
\end{proof}

\begin{lemma} \label{l:P-mod}
As a $P$-module,
$$
R(\widetilde{\fu}) \cong \bigoplus_{j=0}^{k-1} R(\fu) \otimes \C_{\mu_j}.
$$
\end{lemma}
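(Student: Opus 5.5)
Since $\widetilde{\fu} = X_p \times_{V_p} \fu$ and $\fu \to V_p$ is the $P$-equivariant projection with kernel $[\fu,\fu]$, we have $R(\widetilde{\fu}) = R(X_p) \otimes_{R(V_p)} R(\fu)$ as $P$-algebras. The plan is therefore to reduce everything to a statement about $R(X_p)$ as an $L$-module over $R(V_p)$. Specifically, we aim to show
\begin{equation*}
R(X_p) \cong \bigoplus_{j=0}^{k-1} R(V_p) \otimes \C_{\mu_j}
\end{equation*}
as $R(V_p)$-modules with compatible $L$-action; here $L$ acts on $\C_{\mu_j}$ by $e^{\mu_j}$, and $U$ acts trivially throughout. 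Tensoring over $R(V_p)$ with $R(\fu)$ then gives the lemma, because each summand becomes $R(\fu) \otimes \C_{\mu_j}$ with $P$ acting diagonally.

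To obtain the decomposition of $R(X_p)$, we use the Cartesian square \eqref{YXVp}. Since $X_p = V_p \times_{\V_{ad}} \V$, we have
\begin{equation*}
R(X_p) = R(V_p) \otimes_{R(\V_{ad})} R(\V),
\end{equation*}
and $L$ acts on both factors through $\rho$ on $\V$ and $\V_{ad}$. So it suffices to know $R(\V)$ as a $T$-equivariant $R(\V_{ad})$-module. We will invoke the results of \cite[Section 3]{Gra2022}: $R(\V)$ is a free $R(\V_{ad})$-module, namely
\begin{equation*}
R(\V) = \bigoplus_{j=0}^{k-1} R(\V_{ad})\, e^{\bar\mu_j}.
\end{equation*}
Here $R(\V)$ is the semigroup ring of the weights of $T$ lying in the cone spanned by the $-\ga_i$. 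The weights $\bar\mu_j$ are the unique representatives of the cosets of the root lattice in the weight lattice with coefficients in $[0,1)$, so every weight in the cone decomposes uniquely as $\bar\mu_j$ plus a nonnegative integral combination of simple roots (with the sign convention adjusted to $x_i = e^{-\ga_i}$). This is a standard fact about this toric variety. With the paper's sign conventions, the generator of each summand is the function $e^{\bar\mu_j}$ (or its negative, depending on convention), on which $T$ acts by the appropriate character.

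Pulling back along $\rho$ turns the character $\bar\mu_j$ into $\mu_j = \rho^*\bar\mu_j$. Base changing the displayed decomposition of $R(\V)$ to $R(V_p)$ then gives the claimed decomposition of $R(X_p)$, and hence of $R(\widetilde{\fu})$. The main obstacle is bookkeeping rather than substance. First, we must confirm that the free-module decomposition of $R(\V)$ over $R(\V_{ad})$ from \cite{Gra2022} holds with exactly the weights $\bar\mu_j$ defined here. Second, we must check that the sign conventions (the $-\ga_i$ root spaces, and functions transforming by the inverse character) match, so that the summands carry $\C_{\mu_j}$ rather than $\C_{-\mu_j}$. I would verify this on the case $k=2$ before writing it generally.
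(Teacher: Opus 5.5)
Your proposal is correct and follows the paper's proof essentially verbatim. You write $R(\widetilde{\fu}) = R(X_p)\otimes_{R(V_p)} R(\fu)$ and $R(X_p) = R(V_p)\otimes_{R(\V_{ad})} R(\V)$, use the free basis of $R(\V)$ over $R(\V_{ad})$ from \cite[Proposition 3.3]{Gra2022}, and pull back along $\rho$.

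The sign bookkeeping you flag resolves exactly as in the paper. With $x_i = e^{-\ga_i}$, the basis functions are $e^{-\bar\mu_j}$ rather than $e^{\bar\mu_j}$. Since $T$ acts on functions by $(t\cdot\varphi)(x)=\varphi(t^{-1}x)$, each $e^{-\bar\mu_j}$ is a weight vector of weight $\bar\mu_j$, so the summands do carry $\C_{\mu_j}$.
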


\begin{proof}
The ring $R(\V)$ is a free $R(\V_{ad})$ module with basis given by the $e^{-\bar{\mu}_j}$ (see \cite[Proposition 3.3]{Gra2022}, noting that the negative sign comes from our difference in convention).  The function $e^{-\bar{\mu}_j}$ is a $T$-weight vector with weight $\bar{\mu}_j$, so
as a $T$-module, $R(\V) \cong \bigoplus_j R(\V_{ad}) \otimes \C_{\bar{\mu}_j}$.  Therefore, as an $L$-module,
\begin{align*}
R(X_p) & = R(V_p) \otimes_{R(\V_{ad})} R(\V) \cong R(V_p) \otimes_{R(\V_{ad})} (\oplus_j R(\V_{ad}) \otimes \C_{\rho^*\bar{\mu}_j}) \\
& \cong R(V_p) \otimes (\oplus_j \C_{\mu_j}) \cong \bigoplus_j R(V_p) \otimes \C_{\mu_j}.
\end{align*}
Since the $L$-action on $X$ is inflated to a $P$-action where $U$ acts trivially, this is an isomorphism of $P$-modules.
Since $\tilde{\fu} = X_p \times_{V_p} \fu$,  we have a $P$-module isomorphism
%\begin{align*}
$$
R(\tilde{\fu}) =  R(X_p) \otimes_{R(V_p)} R(\fu) \cong ( \bigoplus_j R(V_p) \otimes \C_{\mu_j} ) \otimes_{R(V_p)} R(\fu) =  \bigoplus_j R(\fu) \otimes \C_{\mu_j},
$$
as desired.
\end{proof}

There is a left action of $G$ on $\widetilde{\co}$ arising from left multiplication, and this induces a left action of $G$ on $R(\mcM) = R(\widetilde{\co})$ given by $(g \cdot \varphi) (x) = \varphi(g^{-1} x)$
for $g \in G$, $\varphi \in R(\mcM)$, $x \in \widetilde{\co}$.
There is also a right action of $A(\co)$ on $\widetilde{\co}$ arising from right multiplication, and this induces a right action of $A(\co)$ on $R(\widetilde{\co})$ given
by $(\varphi \cdot a) (x) = \varphi(x a^{-1})$ for $a \in A(\co)$.  Under the natural map $f: Z \to A(\co)$ (which is surjective
because we are in type $A$), the left and right actions are compatible, in the sense that
$z \cdot \varphi = \varphi \cdot f(z)$.  

Write $R(\mcM) = \oplus_j R(\mcM)_{\chi_j}$
for the decomposition of $R(\mcM)$ into $A(\co)$-isotypic components.
By the above remarks, we can view this as a decomposition into $Z$-isotypic components under
the left $Z$-action on $R(\mcM)$, if we view the characters $\chi_j$ as characters of $Z$ via pullback
along $f: Z \to A(\co)$.  

\begin{theorem} \label{t:rings}
As $G$-modules, 
\begin{equation} \label{e:rings}
R(\mcM) = \sum_{j=0}^{k-1} \Ind_L^G(\mu_j),
\end{equation}
where the $j$-th summand equals the isotypic component $R(\mcM)_{\chi_j}$.
\end{theorem}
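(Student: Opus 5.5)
The plan is to compute $R(\mcM)$ through the proper birational map $\tilde{\mu}:\widetilde{\mcM}\to\mcM$, and then to compute $R(\widetilde{\mcM})$ as $H^0(G/P,-)$ of the bundle $\widetilde{\mcM}=G\times^P\tilde{\fu}\to G/P$.

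First, $\mcM$ is normal: it is the normalization of $\ol{\mcO}_p$ in $K(\widetilde{\co})$. The map $\tilde{\mu}$ is proper and birational. By Zariski's main theorem, or simply because $\tilde{\mu}_*\mathcal{O}_{\widetilde{\mcM}}$ is a coherent sheaf of $\mathcal{O}_{\mcM}$-algebras inside $K(\mcM)$ that is integral over the normal ring $R(\mcM)$, we get $\tilde{\mu}_*\mathcal{O}_{\widetilde{\mcM}}=\mathcal{O}_{\mcM}$. Since $\mcM$ is affine, this gives $R(\mcM)=H^0(\widetilde{\mcM},\mathcal{O})$. Next, $\widetilde{\mcM}\to G/P$ is affine with fibers $\tilde{\fu}$, so
\[
H^0(\widetilde{\mcM},\mathcal{O}) = H^0(G/P, R(\tilde{\fu})) = \bigoplus_{j=0}^{k-1} H^0(G/P, R(\fu)\otimes\C_{\mu_j})
\]
by Lemma~\ref{l:P-mod}. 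The problem is thereby reduced to identifying each summand $H^0(G/P,R(\fu)\otimes\C_{\mu_j})=H^0(G/P,S(\fu^*)\otimes\C_{\mu_j})$ with $\Ind_L^G(\mu_j)$. These are not equal term by term, so I will argue with Euler characteristics.

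The standard route is as follows. $R(\fu)=S(\fu^*)$ has a $P$-stable filtration whose associated graded is $S(\fu^*)$ viewed as an $L$-module, with $U$ acting trivially. The Koszul resolution of $\C$ by $S(\fu^*)\otimes\wedge^\bullet\fu^*$, or equivalently the identification $\Ind_L^G = \Ind_P^G(\Ind_L^P)$ with $\Ind_L^P\C_\mu = R(P/L)\otimes\C_\mu \cong R(U)\otimes\C_\mu$ and $R(U)\cong S(\fu^*)$ filtered, gives
\[
\Ind_L^G(\mu_j)=\sum_i(-1)^iH^i(G/P,S(\fu^*)\otimes\C_{\mu_j})
\]
as virtual $G$-modules with finite multiplicities; the grading makes every graded piece finite-dimensional. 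Concretely, $G/L\to G/P$ is an affine bundle with fiber $P/L\cong U$, and $\Ind_L^G(\mu)=H^0(G/L,\mathcal{L}_\mu)$. Since $G/L$ is affine, its higher cohomology vanishes, so $H^i(G/P,\pi_*\mathcal{L}_\mu)$ vanishes for $i>0$ and $H^0$ equals $\Ind_L^G(\mu)$. Here $\pi_*\mathcal{L}_\mu$ is the bundle attached to $R(U)\otimes\C_\mu$, which has a $P$-filtration with graded pieces $S^m(\fu^*)\otimes\C_\mu$ (the $U$-action on $R(U)$ is unipotent). Equality as $G$-modules, with no virtual terms, therefore follows once
\[
H^i(G/P,S(\fu^*)\otimes\C_{\mu_j})=0 \quad\text{for } i>0 .
\]
With this vanishing, the long exact sequences of the filtration show that $H^0$ of the associated graded agrees with $H^0$ of the filtered object, at the level of $G$-module characters, and these characters determine the modules because $G$-modules are semisimple.

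The main obstacle is this vanishing, since the $\mu_j$ are not dominant. I get it from Corollary~\ref{l:Mrational}. Because $\widetilde{\mcM}$ has rational singularities and $\tilde{\mu}$ is proper birational onto $\mcM$, we need $R^i\tilde{\mu}_*\mathcal{O}_{\widetilde{\mcM}}=0$ for $i>0$. For this I would use that $\mcM$ itself has rational singularities (it has symplectic singularities), together with the fact that a resolution of $\widetilde{\mcM}$ is a resolution of $\mcM$, and compose the Leray spectral sequences. Hence $H^i(\widetilde{\mcM},\mathcal{O})=H^i(\mcM,\mathcal{O})=0$ for $i>0$, as $\mcM$ is affine. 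Since $\widetilde{\mcM}\to G/P$ is affine, $H^i(\widetilde{\mcM},\mathcal{O})=\bigoplus_j H^i(G/P,R(\fu)\otimes\C_{\mu_j})$, so every summand vanishes for $i>0$.

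Finally, for the isotypic statement I compare the $Z$-actions. On $\Ind_L^G(\mu_j)$ the center $Z\subset L$ acts by $e^{\mu_j}|_Z$. Since $Z\to A(\co)\cong L^e/L^e_0$ is induced by inclusion, this restriction is the pullback of $\chi_j$ by Lemma~\ref{l:component}. The $j$-th summand is therefore exactly the $\chi_j$-isotypic component $R(\mcM)_{\chi_j}$. This uses the compatibility $z\cdot\varphi=\varphi\cdot f(z)$ and the fact that the $\chi_j$ are distinct.
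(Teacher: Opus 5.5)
Your proposal is correct and follows the paper's proof essentially step for step. The common steps are: $R(\mcM)=H^0(\widetilde{\mcM},\co_{\widetilde{\mcM}})$; vanishing of $R^i\tilde{\mu}_*\co_{\widetilde{\mcM}}$, obtained by composing a resolution of $\widetilde{\mcM}$ with $\tilde{\mu}$ and using rational singularities of both $\widetilde{\mcM}$ and $\mcM$; the affine projection to $G/P$ combined with Lemma~\ref{l:P-mod}; the Euler-characteristic identity; and central characters for the isotypic statement.

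The only differences are in the supporting details. You sketch the identity $\Ind_L^G(\mu)=\sum_i(-1)^iH^i(G/P,R(\fu)\otimes\C_{\mu})$ via the affine bundle $G/L\to G/P$ and a $P$-stable filtration, where the paper simply cites McGovern's Lemma 2.1. You also justify $\tilde{\mu}_*\co_{\widetilde{\mcM}}=\co_{\mcM}$ through normality of $\mcM$, which the paper leaves implicit.
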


\begin{proof}
We adapt the strategy of \cite{McG}.
Since $\tilde{\mu}: \widetilde{\mcM} \to \mcM$ is proper, $R(\mcM) = R(\widetilde{\mcM}) = H^0(\widetilde{\mcM}, \co_{\widetilde{\mcM}})$.
The variety $\widetilde{\mcM}$ has rational singularities by
Lemma \ref{l:Mrational}.  Also, $\mcM$ has rational singularities by \cite[Cor.~6.3]{Bro:98}, or alternatively because $\mcM$ has symplectic singularities (see \cite[Lemma 2.5]{Lo21}),
so it has rational singularities by \cite[Proposition 1.3]{Beau}. Since $\widetilde{\mcM}$ and $\mcM$ both have rational singularities, for any resolution $\widehat{\mu}: \widehat{\mcM} \to \widetilde{\mcM}$, we have equalities $R\widehat{\mu}_* \co_{\widehat{\mcM}} = \co_{\widetilde{\mcM}}$ and $R(\widetilde{\mu} \circ \widehat{\mu})_* \co_{\widehat{\mcM}} = \co_{\mcM}$ in the derived category of coherent sheaves. Since $R(\widetilde{\mu} \circ \widehat{\mu})_* = R\widetilde{\mu}_* \circ R\widehat{\mu}_*$, this implies $R\widetilde{\mu}_* \co_{\widetilde{\mcM}} = \co_{\mcM}$, which further implies that $R^i \widetilde{\mu}_* \co_{\widetilde{\mcM}} = 0$ for $i > 0$.
Since $\mcM$ is affine, $H^0(\mcM,R^i \widetilde{\mu}_* \co_{\widetilde{\mcM}}) \cong H^i(\widetilde{\mcM}, \co_{\widetilde{\mcM}})$, so $H^i(\widetilde{\mcM}, \co_{\widetilde{\mcM}}) = 0$ for $i > 0$.

Let $\pi: \widetilde{\mcM} \to G/P$ denote the projection.    Since $\pi$ is an affine morphism, $R^i \pi_* \co_{\widetilde{\mcM}} = 0$ for
$i>0$, and so
$$
H^i (\widetilde{\mcM}, \co_{\widetilde{\mcM}}) = H^i (G/P, \pi_* \co_{\widetilde{\mcM}}) = H^i(G/P, R(\widetilde{\fu}) ).
$$
Lemma \ref{l:P-mod} then implies that
\begin{equation} \label{e:coh-equal}
H^i (\widetilde{\mcM}, \co_{\widetilde{\mcM}}) \cong  \bigoplus_{j=0}^{k-1} H^i(G/P, R(\fu) \otimes \C_{\mu_j}).
\end{equation}
All terms in this equation vanish for $i>0$.  Therefore, as $G$-modules,
\begin{align*}
R(\mcM) & = H^0 (\widetilde{\mcM}, \co_{\widetilde{\mcM}})  = \sum (-1)^i H^i (\widetilde{\mcM}, \co_{\widetilde{\mcM}})  \\ 
& = \sum_j \sum_i (-1)^i H^i(G/P, R(\fu) \otimes \C_{\mu_j})  = \sum_j \Ind_L^G (\mu_j),
\end{align*}
where the last equality is by \cite[Lemma 2.1]{McG}.  This proves \eqref{e:rings}.  Since each character $e^{\mu_j}$ of $L$ restricts to the character $\chi_j$ of $Z$, the representation $\Ind_L^G (\mu_j)$ of $G$ must lie entirely in the $\chi_j$ isotypic component as a representation of $Z$. The theorem follows.
\end{proof}

The formula of Theorem \ref{t:rings} can be rewritten
using different weights or different Levi subgroups.
Suppose $L' = g L g^{-1}$ for $g \in G$.  
If $e^{\mu}$ is a character of $L$, then $e^{\mu'} : = e^{\mu} \circ C_{g^{-1}}$
is a character of $L'$, where $C_{g^{-1}}$ denotes conjugation by
$g^{-1}$.  We say $(L' ,\mu')$ is conjugate by $g$ to $(L, \mu)$.  In this case, 
there is a $G$-module
isomorphism 
\begin{equation}  \label{e:ind-conj}
\Ind_L^G(\mu) \cong \Ind_{L'}^G(\mu').
\end{equation}

We will apply this in the following situation.  
By definition, $L$ is the standard Levi subgroup of block diagonal matrices with blocks of sizes $p_1', p_2', \ldots $ (in decreasing order).  Any subgroup consisting of block diagonal
matrices with blocks of sizes $\{ p_1', p_2', \ldots \}$ in any fixed order is conjugate to $L$.  Let $M$ be the subgroup where the blocks occur in the following order.
Every entry of $p'$ occurs with multiplicity divisible by $k$.  Let $d$ be the partition of $n/k$ with the same parts as $p'$, but in which each part has $\frac{1}{k}$ its multltiplicity
in $p'$.  Let $q$ be the sequence obtained by concatenating $k$ copies of $d$, and let $M$ be the block diagonal matrix with block sizes $q_1, q_2, \ldots, q_{p_1}$, in that order.
Let $Q = M U_Q$ denote the standard parabolic subgroup containing $M$ as Levi factor.

\begin{theorem} \label{t:rings-dom}
With notation as above, 
as $G$-modules
$$
R(\mcM) = \sum_{j = 0}^{k-1} \Ind_M^G(\lambda_{jn/k})
$$
where the $j$-th summand equals the isotypic component $R(\mcM)_{\chi_j}$.
\end{theorem}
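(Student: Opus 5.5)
The plan is to deduce the statement from Theorem \ref{t:rings} via the conjugation isomorphism \eqref{e:ind-conj}. For each $j$ we will find a single $g_j \in G$ (in fact a permutation matrix representing an element of $W$) with $g_j L g_j^{-1} = M$ and $e^{\mu_j}\circ C_{g_j^{-1}} = e^{\lambda_{jn/k}}$ as characters of $M$. Then $\Ind_L^G(\mu_j)\cong \Ind_M^G(\lambda_{jn/k})$. Since conjugation by $g_j$ fixes the center $Z$ pointwise, the two induced modules lie in the same $Z$-isotypic component, namely $R(\mcM)_{\chi_j}$ by Theorem \ref{t:rings}. This gives the isotypic statement as well.

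The first step is to write $\mu_j$ explicitly as an element of $\fh^* = \C^n/\C(1,\dots,1)$. By definition $e^{\mu_j}=\rho^*e^{\bar\mu_j}$, and $e^{\bar\mu_j}(t)=\prod_{r\in S_j} t_r$ for a subset $S_j\subset\{1,\dots,k\}$ of size $j$. This is the description of $\bar\mu_j$ as a $W_k$-conjugate of $\bar\lambda_j$ from \cite[Proposition 5.9]{Gra2022}, as illustrated in Example \ref{Ex:k=6}. By \eqref{rhoDef}, $e^{\mu_j}(A_1,\dots,A_{p_1})=\prod_{i \bmod k\,\in S_j}\det A_i$. Hence $\mu_j$ is the $0/1$ vector that equals $1$ exactly on the coordinates of those blocks $i$ of $L$ with $i \bmod k\in S_j$. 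The number of ones is $\sum_{i\bmod k\in S_j}p'_i$. Because each part of $p'$ appears with multiplicity divisible by $k$ and consecutive equal parts occupy consecutive indices, each residue class mod $k$ of blocks has total size $n/k$. So $\mu_j$ has exactly $jn/k$ ones, which recovers that $\mu_j\in W\lambda_{jn/k}$.

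Next we build the block permutation. The blocks of $L$ are listed in decreasing order $p'_1\ge p'_2\ge\cdots$. Each residue class $r\in\{1,\dots,k\}$ of block indices consists of one copy of each part of $d$, since each run of equal parts has length a multiple of $k$. We therefore arrange the blocks of $L$ into the $k$ groups indexed by residue classes, each group forming a copy of $d$. We order these groups so that the $j$ classes in $S_j$ come first, and within each group we order blocks as in $d$. This reordering of blocks is a permutation $w_j$ of block positions. Its permutation matrix $g_j$ (rescaled by a scalar of absolute sign to lie in $SL_n$, which does not affect conjugation) satisfies $g_jLg_j^{-1}=M$, since the resulting sequence of block sizes is exactly $q$, the concatenation of $k$ copies of $d$. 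Under this conjugation, the coordinates where $\mu_j$ equals $1$ move to the first $j$ copies of $d$, that is, to the first $j\cdot n/k$ coordinates. So $w_j\mu_j=(1^{jn/k},0^{n-jn/k})=\lambda_{jn/k}$, and since both are characters of the respective Levis given by products of determinants of blocks, the characters match.

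The main obstacle is bookkeeping. We must verify that the residue-class groups really each consist of one copy of $d$ in the required order, and that the product-of-determinants characters correspond under block permutation. For the first point we will check carefully that a run of $mk$ equal parts distributes $m$ to each residue class. The second point follows because conjugation by a block permutation sends $\det$ of a block to $\det$ of the image block. The rest is formal from Theorem \ref{t:rings} and \eqref{e:ind-conj}.
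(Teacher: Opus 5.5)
Your proposal is correct and takes the same approach as the paper, whose proof only asserts that some $g_j \in G$ conjugates $(M,\lambda_{jn/k})$ to $(L,\mu_j)$ and then uses \eqref{e:ind-conj} together with Theorem \ref{t:rings}. Your explicit choice of $g_j$ is valid and supplies the detail the paper leaves implicit: a rescaled block permutation that groups the blocks of $L$ by residue class mod $k$ (each class being one copy of $d$) and puts the classes in $S_j$ first.
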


\begin{proof}
For each $j$,
there is an element $g_j \in G$ such that the pair $(M, \lambda_{jn/k})$ is conjugate by $g_j$ to $(L, \mu_j)$.
\end{proof}

\begin{example} \label{Ex64.1}
Let $p = (6,4)$, so $p' = (2^4, 1^2)$.  Then $A = (A_1, A_2, A_3, A_4, a_5, a_6)$, where the first $4$ matrices are square of size $2$,
We have $k = 2$, $d = (2^2, 1)$, and $q =(2,2,1,2,2,1)$.  We can find $g \in G$ such that
$(M, e^{\lambda_{5}})$ is conjugate by $g$ to $(L, e^{\mu_1})$.  For example, $g$ can be chosen
to conjugate $A = (A_1, A_2, A_3, A_4, a_5, a_6)$ to $(A_1, A_3, a_5, A_2, A_4, a_6)$.  
%It follows that as $G$-modules, $R(\mcM) = \Ind_M^G(0) + \Ind_M^G(\lambda_2)$.
The element $g$ is not unique; for example we could also choose $g$ to conjugate $A$ to $(A_3, A_1, a_5, A_2, A_4, a_6)$.
\end{example}

\begin{example} \label{Ex12-6}
Let $p = (12, 6)$, so $p' = (2^6, 1^6)$.  
Suppose $A =  (A_1, A_2, \dots, A_6, a_7, \dots, a_{12})$ is an element of $L$, where the $A_i$
are $2 \times 2$ blocks and the $a_i$ are scalars.  By definition, $e^{\mu_i} = \rho^* e^{\bar{\mu}_i}$.
In this example, $k = 6$.  Using the definition of $\rho$ and the expressions for the $e^{\bar{\mu}_i}$ 
given in Example \ref{Ex:k=6},
it is straightforward to compute $e^{\mu_i}(A)$; for example, $e^{\mu_1}(A) = \det A_1 \cdot a_7$
and $e^{\mu_2}(A) = \det A_1 \det A_4 \cdot a_7 a_{10}$.  
Thus, $\mu_1 = (1^2, 0^{10}, 1, 0^5)$ is $W$-conjugate to $\lambda_3 = (1^3, 0^{15})$,
and $\mu_2 = (1^2, 0^4, 1^2, 0^4, 1, 0^2, 1, 0^2)$ is $W$-conjugate to $\lambda_6 = (1^6, 0^{12})$.
We can express the $\mu_i$ as a linear combination of the $\lambda_j$, with coefficients
given by the inner products of $\mu_i$ with the roots $\alpha_j$,
so for example  $\mu_1 = \lambda_2 - \lambda_{11} + \lambda_{12}$.  In this example,
the Levi subgroup $M$ has block sizes given by the sequence $(2,1)$, concatenated with itself $6$ times.
For comparison, we note there is a standard Jacobson-Morozov parabolic whose Levi subgroup
has block sizes $(1^3, 2^6, 1^3)$.
\end{example}

\begin{remark} \label{r:T-ind}
The formula for $R(\mcM)$ can be expressed as a linear combination of terms of the form $\Ind_H^G (\nu)$, as was done
for $R(\co)$ in ~\cite[Corollary 3.2]{McG}.  Indeed, it follows from the proof of \cite[Proposition 2.6]{PA} that for any highest weight representation $V^L_{\lambda}$ of $L$ (in particular for
any character $e^{\lambda}$) of $L$, we have the following equation of virtual $G$-modules:
\begin{equation} \label{e:T-ind}
\Ind_L^G(V^L_{\lambda}) = \sum_{w \in W_L} (-1)^{\ell(w)} \Ind_{H}^G(\lambda + \rho_L - w\rho_L).
\end{equation}
This formulation allows us to use Frobenius reciprocity 
to compute the multiplicity of $V_{\mu}$ in $R(\mcM)$ in terms of dimensions of weight spaces of $H$.
This is convenient because these dimensions are readily computable, for example, by the familiar Kostant multiplicity
formula.  Note that the multiplicity of $V_{\mu}$ in $R(\mcM)$
can be computed more directly by
using a generalization of Kostant's multiplicity formula
which applies to subgroups such as $L$ (see \cite[Proposition 3.4]{Vog78}).
\end{remark}

\section{Lifting and conjugation} \label{s:lifting}
Theorems \ref{t:rings} and \ref{t:rings-dom} provide two complete descriptions of $R(\mcM)$ as a $G \times A(\co)$-module. 
However, $R(\mcM)$ has additional structure: it is a ring and an $R(\co)$-module, and we will see in the next section that it has a natural grading.  
In order to study this additional structure, we will need to show that the lifting of characters of $A(\co)$ is compatible
with conjugation of Levi subgroups.
In the next section we will apply this to the the pairs $(L, \mu_j)$ and $(M, \lambda_{j n/k})$, which are conjugate
by some $g \in G$.
In particular, we know that $e^{\mu_j}$ lifts the character
$\chi_j$ of $A(\co)$ to $P$; we will use the results of this section to deduce that $e^{\lambda_{j n/k}}$ lifts $\chi_j$ to $Q$.
The difficulty is that $P$ and $Q$ are not
conjugate subgroups of $G$; only their Levi subgroups are conjugate.  
Thus, while we can easily show that $e^{\lambda_{j n/k}}$ defines a lift of $\chi_j$
to $C_g P$ (cf.~Proposition \ref{p:conjugate}), we need to prove that we can lift to $Q$.
This follows from the key result of this section, Proposition
\ref{p:independent};  the discussion before this
proposition explains the issues that need to be dealt with in the proof.

\begin{lemma} \label{l:stab}
Let $P_1 = L_1 U_1$ be a Richardson parabolic for $\co$ and let $e_1 \in \co \cap \fu_1$.  Then $G^{e_1} = P_1^{e_1}$.
\end{lemma}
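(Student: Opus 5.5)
The plan is to use the Springer map $\mu_1: T^*(G/P_1) \cong G \times^{P_1} \fu_1 \to \overline{\co}$. Its fibre over $e_1$ is identified with the set of cosets $gP_1 \in G/P_1$ such that $\mathrm{Ad}(g^{-1}) e_1 \in \fu_1$. We compare the stabilizer of $e_1$ in $G$ with the stabilizer of the point $[1, e_1] \in G \times^{P_1} \fu_1$, exactly as was done for $P$ in the proof of Lemma \ref{tMNorm}.

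First, the inclusion $P_1^{e_1} \subset G^{e_1}$ is trivial. For the reverse inclusion, let $g \in G^{e_1}$. Then $g \cdot [1, e_1] = [g, e_1]$, and $\mu_1([g, e_1]) = \mathrm{Ad}(g) e_1 = e_1$, so both $[1,e_1]$ and $[g,e_1]$ lie in the fibre $\mu_1^{-1}(e_1)$. The key input is that $\mu_1$ is injective over $\co$. In the present setting ($G = SL_n$), every Richardson parabolic for $\co$ has Hesselink number one, by the discussion at the start of Section \ref{s:Springer} (cf.\ \cite[Theorem 3.3]{He}). Concretely, the degree of $\mu_1$ over $\co$ is $[G^{e_1} : P_1^{e_1}]$, and in type $A$ this equals $1$ for every polarization.

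Given injectivity, we get $[g, e_1] = [1, e_1]$ in $G \times^{P_1} \fu_1$. This means there is $p \in P_1$ with $g = p^{-1}$ and $\mathrm{Ad}(p) e_1 = e_1$, so $g \in P_1$. Hence $g \in P_1^{e_1}$, which proves $G^{e_1} = P_1^{e_1}$.

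The main obstacle is justifying that the fibre of $\mu_1$ over $e_1$ is a single point, rather than merely that $\mu_1$ is birational. Birationality only says the generic fibre over $\co$ is one point. Since $\mu_1$ is $G$-equivariant and $\co$ is a single $G$-orbit, all fibres over $\co$ are isomorphic, so this upgrades to a single point over every element of $\co$, including $e_1$; this is the step I would state carefully. If one prefers to avoid citing Hesselink, a direct fallback is available. Any $g \in G^{e_1}$ preserves the flag of subspaces determined by $e_1$ together with the partition $p'$ ordering. In type $A$, the parabolic $P_1$ is the stabilizer of a flag $0 \subset V_1 \subset \cdots \subset \C^n$ with $e_1 V_i \subset V_{i-1}$. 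For $e_1$ in the Richardson orbit, the dimensions force $V_i = \ker e_1^{\,i}$ (or $\mathrm{im}\, e_1^{\,s-i}$, depending on the ordering of the blocks), a flag canonically determined by $e_1$. Therefore $G^{e_1}$ preserves it, and $G^{e_1} \subset P_1$.
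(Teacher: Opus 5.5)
Your main argument is correct and is the paper's own: birationality of $T^*(G/P_1)\to\overline{\co}$ together with $G$-equivariance makes the fibre over $e_1$ a single point, so $G^{e_1}=G^{[1,e_1]}=P_1^{e_1}$. One caveat on the citation: Section~\ref{s:Springer} only asserts that \emph{some} polarization has degree one. For an arbitrary $P_1$ you can see it directly, since $P_1^{e_1}$ has the same dimension as $G^{e_1}$ and so contains $G^{e_1}_0$, and it also contains the center $Z$, which surjects onto $A(\co)$ in type $A$.

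Drop the fallback, however, because it is false for non-monotone block orderings. For the paper's $Q$ with $p=(6,3)$ and blocks $(2,1,2,1,2,1)$, the second step of the flag has dimension $3$, while $\dim\ker e_1^2=4$ and $\dim\operatorname{im} e_1^4=2$. Such parabolics ($Q$ and the $P_\sigma$) are exactly where the lemma is applied.
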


\begin{proof}
Since $T^*(G/P_1) \to \ol{\mcO}_p$ is birational, the stabilizer $G^{e_1}$ is the same as the stabilizer $G^{[1 , e_1]}$ of $[1 , e_1] \in T^*(G/P_1)$, which is $P_1^{e_1}$
(cf.~ the proof of Lemma \ref{tMNorm}).
\end{proof}

\begin{definition} \label{d:P-lift}
Let $P_1$ be a Richardson parabolic for $\co$.  We say a character $e^{\nu}$ of 
$P_1$ lifts the character $\chi$ of $A(\co)$  if for some $e_1 \in \co \cap \fu_1$, the character $e^{\mu}$ is trivial on $(P_1^{e_1})_0$, and
the restriction of the restriction of $e^{\nu}$ to $P_1^{e_1}$ defines the character $\chi$
of $P_1^{e_1}/(P_1^{e_1})_0 \cong A(\co)$.  
\end{definition}

The notion of lifting to $P_1$ is 
independent of the choice of $e_1$.  Indeed, $\co \cap \fu_1$ is a single
$P_1$-orbit (see \cite{Ric}), so any other element of $\co \cap \fu_1$ is of the form $g e_1$ for some
$g \in P_1$, and $P_1^{g e_1}= C_g P_1^{e_1}$.  Thus, $C_g$ induces
an isomorphism of $P_1^{e_1}/(P_1^{e_1})_0$ with  $P_1^{ge_1}/(P_1^{ge_1})_0$.
Since $e^{\nu}$ is a class function on $P_1$, $e^{\nu} \circ C_g = e^{\nu}$.  Thus,
$e^{\nu}$ induces the same function on $P_1^{e_1}/(P_1^{e_1})_0$ and 
$P_1^{ge_1}/(P_1^{ge_1})_0$ when we identify them via $C_g$.

\begin{definition} \label{d:Levi-lift}
We say that a character $e^{\nu}$ of a Levi subgroup
$L_1$ lifts the character $\chi$ of $A(\co)$ if there is a Richardson parabolic $P_1$ for $\co$
such that $L_1$ is a Levi factor of $P_1$, and such that the extension of $e^{\nu}$ to $P_1$
lifts $\chi$.  In this setting, we also refer to $\nu$ as a lift of $\chi$ to $L_1$.
\end{definition}

The next result shows that lifting, in the sense of the previous definition,
is compatible with conjugation of Levi subgroups.

\begin{proposition} \label{p:conjugate}
Suppose $e^{\nu_1}$ defines a character of $L_1$ lifting a character $\chi$
of $A(\co)$.  Suppose $(L_1, \nu_1)$ is conjugate via $g$
to $(L_2, \nu_2)$.  Then $e^{\nu_2}$ is a character of $L_2$ lifting $\chi$.
\end{proposition}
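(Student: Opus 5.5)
We transport everything by conjugation by $g$. The hypothesis supplies a Richardson parabolic $P_1 = L_1 U_1$ for $\co$ with Levi factor $L_1$, such that the extension of $e^{\nu_1}$ to $P_1$ lifts $\chi$. We set $P_2 := C_g P_1 = g P_1 g^{-1}$. Then $P_2$ has Levi factor $C_g L_1 = L_2$ and unipotent radical $U_2 = C_g U_1$, with Lie algebra $\fu_2 = \mathrm{Ad}(g)\fu_1$. Since $\co$ is $G$-stable, $\co \cap \fu_2 = \mathrm{Ad}(g)(\co \cap \fu_1)$ is dense in $\fu_2$, so $P_2$ is again a Richardson parabolic for $\co$. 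The extension of $e^{\nu_2} = e^{\nu_1}\circ C_{g^{-1}}$ to $P_2$ (trivial on $U_2$) is exactly $e^{\nu_1}\circ C_{g^{-1}}$ on all of $P_2$, because $C_{g^{-1}}$ carries $U_2$ onto $U_1$ and $L_2$ onto $L_1$. So it suffices to show that this character of $P_2$ lifts $\chi$ in the sense of Definition \ref{d:P-lift}.

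Fix $e_1 \in \co \cap \fu_1$ as in Definition \ref{d:P-lift}, and put $e_2 = \mathrm{Ad}(g) e_1 \in \co \cap \fu_2$. Then $P_2^{e_2} = C_g(P_1^{e_1})$ and $(P_2^{e_2})_0 = C_g((P_1^{e_1})_0)$. Consequently $e^{\nu_2}$ is trivial on $(P_2^{e_2})_0$, and the character it induces on $P_2^{e_2}/(P_2^{e_2})_0$ is the transport via $C_g$ of the character $\chi$ on $P_1^{e_1}/(P_1^{e_1})_0$.

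The remaining and main point is that the identifications with $A(\co)$ are compatible with $C_g$. By Lemma \ref{l:stab}, $P_i^{e_i} = G^{e_i}$, so the identification $P_i^{e_i}/(P_i^{e_i})_0 \cong A(\co)$ is the identification $G^{e_i}/G^{e_i}_0 \cong A(\co)$. The conventions specify that for $e_1, e_2 \in \co$ these groups are canonically identified via conjugation. Concretely, one conjugates by any $h$ with $\mathrm{Ad}(h)e_1 = e_2$, and the result is independent of $h$ because $A(\co)$ is abelian: two choices differ by an element of $G^{e_1}$, which acts trivially on the abelian quotient. Taking $h = g$, the map $C_g$ is precisely the canonical identification. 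Hence the character induced by $e^{\nu_2}$ on $A(\co)$ is $\chi$, so $e^{\nu_2}$ lifts $\chi$ to $P_2$, and therefore to $L_2$.
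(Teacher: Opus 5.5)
Your proof is correct and follows the paper's argument: you set $P_2 = C_g P_1$ and $e_2 = g e_1$, then transport the stabilizer $P_1^{e_1}$ and the character $e^{\nu_1}$ by $C_g$. You also make explicit, via Lemma \ref{l:stab} and the fact that $C_g$ does not depend on the choice of conjugating element because $A(\co)$ is abelian, why $C_g$ respects the identifications with $A(\co)$; the paper leaves this point implicit.
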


\begin{proof}
Choose a Richardson parabolic $P_1$ containing $L_1$ as Levi factor, and $e_1 \in \co \cap \fu_1$. Then $e^{\nu_1}$ extends to a character of $P_1$ that induces the character $\chi$ on $P_1^{e_1}/(P_1^{e_1})_0 \cong A(\co)$. Additionally, $P_2 := C_g P_1$ is a parabolic containing $L_2$ and $e_2 := g e_1 \in \co \cap \fu_2$. Therefore, $e^{\nu_2} = e^{\nu_1} \circ C_{g^{-1}}$ defines a character of $P_2$ lifting the character $\chi$ on $P_2^{e_2}/(P_2^{e_2})_0 \cong A(\co)$. By definition, this means $e^{\nu_2}$ is a character of $L_2$ lifting $\chi$.
\end{proof}

The next proposition is the key result of this section.
It shows that lifting in the sense of Definition \ref{d:Levi-lift}
is independent of the choice
of Richardson parabolic containing $L_1$.   The issue is the following.
Suppose $P_1$ and $P_2$ are Richardson parabolics containing $L_1$ as a Levi subgroup,
and $e_i \in \co \cap \fu_i$ for $i = 1,2$.
A necessary
condition for $e^{\nu}$ to lift $\chi$ to $P_j$
is that $e^{\nu}$ is trivial on $(P^{e_j}_j)_0$.
However, $e_1$ and $e_2$ need not be conjugate by $L_1$, so the same is true for
the stabilizer groups $P^{e_j}_j$.
Thus, even if $e^{\nu}$ is trivial on $(P^{e_1}_1)_0$, it is not obvious that
it is trivial on $(P^{e_2}_2)_0$.  In the proof of
Proposition \ref{p:independent}, we circumvent this issue by showing
that we can choose $e_1$ and $e_2$ so that $(P^{e_1}_1)_0$
and $(P^{e_2}_2)_0$ share a common reductive part.

\begin{proposition} \label{p:independent}
Suppose that $P_1$ and $P_2$ are Richardson parabolics for $\co$, and both $P_1$ and $P_2$
contain $L_1$ as a Levi subgroup.  If the extension of $e^{\nu}$ to $P_1$ lifts
$\chi$, then the extension of $e^{\nu}$ to $P_2$ lifts $\chi$.
\end{proposition}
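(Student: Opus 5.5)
The strategy is to reduce everything to a reductive subgroup $K \subset L_1$ that serves simultaneously as a reductive part of $P_1^{e_1}$ and of $P_2^{e_2}$, for suitably chosen $e_1 \in \co \cap \fu_1$ and $e_2 \in \co \cap \fu_2$. The extension of $e^{\nu}$ to $P_i$ is trivial on $U_i$. So its restriction to $P_i^{e_i} = K \ltimes (\text{unipotent radical})$ is determined by its restriction to $K$. If $K \subset L_1$, this restriction is just $e^{\nu}|_K$, whether we regard $e^\nu$ as a character of $P_1$ or of $P_2$. Moreover, a reductive part $K$ of $P_i^{e_i}$ meets every component, and its unipotent complement is connected, so $K/K_0 \cong P_i^{e_i}/(P_i^{e_i})_0 \cong A(\co)$. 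Hence:
\begin{itemize}
\item $e^\nu$ is trivial on $(P_i^{e_i})_0$ if and only if it is trivial on $K_0$;
\item the induced character of $A(\co)$ is the one induced by $e^\nu|_K$ on $K/K_0$.
\end{itemize}
Both conditions are then literally the same for $i=1,2$. The one remaining check is that the two identifications $K/K_0 \cong A(\co)$ agree. Both are induced by the inclusions $K \subset G^{e_i}$ together with the canonical identification of $A(\co)$ across points of $\co$ from the conventions. For $SL_n$ this can be checked through the center $Z$, which surjects onto $A(\co)$, lies in $K$, and acts compatibly on both sides.

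\emph{Step 1 (normalizing $e_1$).} Start with any $e_1 \in \co \cap \fu_1$. By Lemma \ref{l:stab}, $G^{e_1} = P_1^{e_1}$. A reductive part $K'$ of $P_1^{e_1}$ is a reductive subgroup of $P_1$, so by Mostow's theorem it is $U_1$-conjugate into the Levi factor $L_1$. Replacing $e_1$ by $u e_1$ for a suitable $u \in U_1$ keeps $e_1$ in $\co \cap \fu_1$, and the stabilizer then has a reductive part $K \subset L_1$. By the remark after Definition \ref{d:P-lift}, the lifting condition on $P_1$ is unchanged.

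\emph{Step 2 (finding $e_2$).} We look for $e_2 \in \co \cap \fu_2$ fixed by $K$. Then $K \subset G^{e_2} = P_2^{e_2}$ by Lemma \ref{l:stab}. Since all reductive parts of $G^{e_2}$ are conjugate to those of $G^{e_1}$ (the elements $e_1,e_2$ are $G$-conjugate), they are isomorphic to $K$. A reductive subgroup of $G^{e_2}$ isomorphic to a maximal one is itself maximal by dimension and component count, so $K$ is a reductive part of $P_2^{e_2}$.

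To produce $e_2$, pass to the centralizer $\fg^K$. This is a reductive Lie algebra containing the Levi subalgebra $\fl_1^K$ and the two parabolics $\fp_1^K, \fp_2^K$, which share the Levi $\fl_1^K$. Their nilradicals are $\fu_1^K$ and $\fu_2^K$. We want $\co \cap \fu_2^K \neq \emptyset$, knowing $e_1 \in \co \cap \fu_1^K$.

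For $SL_n$ this is checked explicitly. As in \eqref{e:blocks2}, $K$ may be taken to be a product of groups $GL_{m_i}$ embedded block-diagonally, one copy in each $GL_{p'_j}$ block of $L_1$. Then $\fg^K$ is a product of $\mathfrak{gl}$'s of multiplicity spaces, and $\fu_2^K$ consists of block "staircase" maps between these multiplicity spaces. One writes down $e_2$ by composing, in the order prescribed by $P_2$, identity-type maps between multiplicity spaces, mimicking the definition of $e$ in Section \ref{s:Springer} but for the block order of $P_2$. The Jordan type of $e_2$ is then computed to be $p$, since the multiset of block sizes of $L_1$ is $p'$ regardless of their order (cf.\ \cite[Theorem 3.3]{He}).

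\emph{Main obstacle.} I expect Step 2 to be the main difficulty: producing $e_2 \in \co \cap \fu_2$ commuting with the specific $K$, and verifying that its Jordan type is $p$ when the blocks of $L_1$ are reordered. If the explicit construction becomes unwieldy, I would first try a density argument instead. Show that $\co \cap \fu_2^K$ is open and dense in $\fu_2^K$, by showing that $\fp_2^K$ has its Richardson orbit in $\fg^K$ contained in $\co$. This would follow from a dimension count comparing $\dim \fu_1^K = \dim \fu_2^K$, which holds because both equal half of $\dim \fg^K - \dim \fl_1^K$.
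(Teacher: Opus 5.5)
Your proposal is correct and is essentially the paper's argument. The paper takes the explicit $e$ of Section \ref{s:Springer} and builds the reordered element $e_\sigma \in \co \cap \fu_{P_\sigma}$ (your $e_2$) commuting with the block-diagonal reductive part $L^e_{red}$ (your $K$, which the paper takes inside $L^e_0$ rather than all of $L^e$); it concludes by the same isomorphic-reductive-parts argument that this is a common reductive part, and identifies the character of $A(\co)$ through the center.

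Two small remarks. First, the Mostow step is unnecessary once you choose $e_1$ to be this explicit element. Second, the Jordan type of $e_2$ should be checked directly: for each $r$, $e_2$ strings the $u_r$ spaces $V_{i,r}$ into $m_r$ chains of length $u_r$. The multiset of block sizes does not justify this, since it only identifies the Richardson orbit of $P_2$.
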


\begin{proof}
Without loss of generality, we can assume $L_1 = L$ since we can pick some $g \in G$ such that $C_g L_1 = L$ and conjugate all of the data by $g$.  We can compare
both $P_1$ and $P_2$ to $P$, so it suffices to prove the proposition under the assumption $P_1 = P$.  
We will define a particular reductive part $L^e_{red}$ of $P^e_0$ (where $e$ is as in the previous section) below, and then show that there exists $e_2 \in \co \cap \fu_2$ such
that $L^e_{red}$ is also a reductive part of $(P_2^{e_2})_0$.  

The Levi $L$ canonically decomposes the standard representation $\C^n$ of $G$ into simple subrepresentations $V_1, \dots, V_{p_1}$ of $L$.  Fix standard $H$-eigenvectors $X_{ij} \in \C^n$ for each $i \in \{1, \dots, p_1\}$ and $j \in \{1, \dots, \dim V_i\}$, such that $X_{i1}, \dots, X_{i,\dim V_i}$ form a basis for $V_i$, and are ordered in accordance with the positive system. We note that the choice of $e$ in Section \ref{s:Springer} maps each $X_{ij}$ to $X_{i + 1,j}$ if $i < p_i$ and $X_{ij}$ to zero if $i = p_i$.  

It will be convenient to rephrase the definition of $e$ as follows.   The block decompositions of the matrices $A_i$ and $A_1$ in \eqref{e:blocks1} and \eqref{e:blocks2}
are reflected in decompositions of each of the subspaces $V_i$.  Let $u_1 > u_2 > \cdots > u_{\ell}$ be the distinct parts of $p$, and let $m_r$ denote the multiplicity of $u_r$ in $p$.
Let $r_i$ denote the largest index such that $u_{r_i} \geq i$.  Then $V_i = V_{i,1} \oplus \cdots \oplus V_{i, r_i}$,
where $\dim V_{i,r} = m_r$, and the $V_{i,r}$ are spanned by standard basis vectors such that the decomposition is compatible with the positive system.
Note that $V_i \cong V_{i+1}$ if $p$ has no parts of size $i$.  The element $e$ then takes $V_{i, j}$ isomorphically onto $V_{i+1, j}$ if $j \leq r_{i+1}$, and
is given by the identity matrix with respect to our chosen basis of each space.  If $j > r_{i+1}$ then $e$ takes $V_{i, j}$ to $0$.  
The decomposition is illustrated in Example \ref{ex:decomp} below.  Observe that for fixed $r$, the sum of the spaces $V_{i,r}$ is the subspace of $\C^n$
corresponding to the Jordan blocks of $e$ of size $u_r$, and there are $m_r$ such blocks.  This confirms that $e$ has Jordan form given by $p$.

The group $L$ consists of the elements of $G$ which preserve each subspace $V_i$.  We specify a reductive part $L^e_{red}$ of $L^e_0$ to be the subgroup
of $L^e_0$ defined by setting all the $B_j$ matrices in \eqref{e:blocks2} to $0$.  (Note that the $C_j$ matrices satisfy \eqref{Le0} since we are considering the identity
component.)  This implies that an element
$g \in L^e_0$ preserves each of the subspaces $V_{i,j}$, and as a map from $V_{i,j}$ to itself is given by the matrix $C_j$.  Note that $L^e_{red}$ 
is also a reductive part of $P^e_0 = G^e_0$.

Each choice of a parabolic containing $L$ corresponds to an ordering of the summands $V_1, \dots, V_{p_1}$.   In particular, given a permutation $\sigma \in S_{p_1}$, we consider the partial flag $F_{\sigma} = \{0 \subset V_{\sigma(1)} \subset V_{\sigma(1)} \oplus V_{\sigma(2)} \subset \dots \subset \C^n\}$, and we let $P_{\sigma}$ be the parabolic subgroup $P_{\sigma} := \Stab_{G}(F_{\sigma})$. 
We have $P_2 = P_{\sigma}$ for some $\sigma \in S_{p_1}$.
Additionally, we specify an element $e_{\sigma} \in \fu_{P_{\sigma}} \cap \co$ by fixing its action on each $X_{ij}$: when $i < p_i$, $e_{\sigma}$ maps each $X_{\sigma(i)j}$ to $X_{\sigma(i + s)j}$, where $s$ is the smallest positive integer such that there is a basis vector $X_{\sigma(i + s)j}$, and when $i = p_i$, $e_{\sigma}$ maps $X_{\sigma(i)j}$ to zero. 
In terms of the decompositions of the $V_i$, we see that $e_{\sigma}$ takes the subspace $V_{\sigma(i), j}$ isomorphically to $V_{\sigma(i+s), j}$, where $s$ is the smallest
positive integer such that $j \leq r_{\sigma(i+s)}$, or to $0$ if no such $j$ exists.  Reasoning as above shows that $e_{\sigma}$ has Jordan form given by $p$.

From the explicit descriptions of $e_{\sigma}$ and $L^e_{red}$, we see that $L^e_{red}$ is contained in $L^{e_{\sigma}}_0 \subset G^{e_{\sigma}}_0$.  
Since the groups $G^e_0$ and $G^{e_{\sigma}}_0$ are conjugate subgroups of $G$, they have isomorphic reductive parts.
Hence $L^e_{red}$ is
a reductive part of $G^{e_{\sigma}}_0$, and hence of $(P_{\sigma}^{e_{\sigma}})_0$.  

Since the extension of $e^{\nu}$ to $P$ defines a character $\chi$ of $A(\co)$,  the restriction of $e^{\nu}$ to $P^e_0$ is trivial, so the restriction
of $e^{\nu}$ to $L^e_{red}$ is trivial.  Since $L^e_{red}$ is a reductive part of $(P_{\sigma}^{e_{\sigma}})_0$, this implies that the extension of
$e^{\nu}$ to $P_{\sigma}$ is trivial on $(P_{\sigma}^{e_{\sigma}})_0$.  Hence the extension of $e^{\nu}$ to $P_{\sigma}$ defines a character of
$A(\co)$.  This character must be $\chi$ since it is determined by the restriction of $e^{\nu}$ to the center of $G$.
\end{proof}

\begin{example} \label{ex:decomp}
Let $p = (10^2, 6, 4^2)$, so $p' = (5^4, 3^2, 2^4)$, $u = (10, 6, 4)$, and $(m_1, m_2, m_3) = (2,1,2)$.  Then
for $i = 1,2,3,4$ we have $V_i = V_{i, 1} \oplus V_{i,2} \oplus V_{i,3}$ where $V_{i,r} \cong \C^{m_r}$;
for $i = 5,6$, we have $V_i = V_{i, 1} \oplus V_{i,2}$, and for $i = 7,8,9,10$ we have $V_i = V_{i, 1}$.
\end{example}

The following lemma, and graded versions of the lemma
(see \eqref{e:gradingP} and \eqref{e:gradingQ}), 
will be essential in studying the structure of $R(\mcM)$ as a graded ring and $R(\co)$-module.  
For the principal orbit, this result is in \cite{Bry}; an explanation can also be found
in \cite{Gra1992}.  A similar result is implicitly used in \cite{Som}.  For the parabolic $P = LU$ and the weight $\mu_j$, this 
inclusion can be seen from the proof of Theorem \ref{t:rings} (and in fact it is an isomorphism in this case).  
We provide a proof for the convenience of the reader.

\begin{lemma} \label{lem:inclusion}

Let $P'$ be a Richardson parabolic
for $\co$ with Levi subgroup $L'$.
Let $e^{\mu'}$ be a character of $L'$ (equivalently of $P'$) lifting the character $\chi$ of $A(\co)$. Then there is
an inclusion of $R(\co)$-submodules of $R(\mcM)$:
\begin{equation} \label{e:inclusion}
H^0(G/P', R(\fu_{P'}) \otimes \C_{\mu'}) \subset R(\mcM)_{\chi}.
\end{equation}

\end{lemma}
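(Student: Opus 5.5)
We will realize $H^0(G/P', R(\fu_{P'}) \otimes \C_{\mu'})$ as a space of $G$-equivariant functions on $G \times^{P'} \fu_{P'}$ twisted by $\mu'$, and then pull these back to functions on the dense orbit $\widetilde{\co} = G/G^{e'}_0$, where $e' \in \co \cap \fu_{P'}$.

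First, identify the left-hand side of \eqref{e:inclusion}. A section is a regular function $F : G \times \fu_{P'} \to \C$ satisfying
$$F(gp, p^{-1}x) = e^{\mu'}(p) F(g,x) \quad \text{for } p \in P'$$
(up to the sign convention fixed by the definition of $\Ind$). We then restrict $F$ to $G \times \{e'\}$ and set $\phi_F(g) = F(g, e')$. For $h \in P'^{e'}$ this gives $\phi_F(gh) = e^{\mu'}(h)\phi_F(g)$. By Lemma \ref{l:stab} we have $P'^{e'} = G^{e'}$, and since $e^{\mu'}$ lifts $\chi$, the character $e^{\mu'}$ is trivial on $G^{e'}_0$. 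Hence $\phi_F$ is right $G^{e'}_0$-invariant and so descends to a regular function on $G/G^{e'}_0 \cong \widetilde{\co}$. Moreover, $G^{e'}/G^{e'}_0 = A(\co)$ acts on $\phi_F$ through $e^{\mu'}$, i.e.\ through $\chi$ (or its inverse, depending on the conventions for the right action, which we will check and match). Since $R(\widetilde{\co}) = R(\mcM)$, the assignment $F \mapsto \phi_F$ lands in $R(\mcM)_\chi$.

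Next, check the required properties of $F \mapsto \phi_F$.
\begin{enumerate}
\item It is $G$-equivariant, since $G$ acts by left translation in the first variable on both sides.
\item It is injective. The set $P' e'$ is dense in $\fu_{P'}$, because $\co \cap \fu_{P'}$ is a single dense $P'$-orbit \cite{Ric}. If $\phi_F = 0$, then $F(g, pe') = e^{\mu'}(p)^{-1}F(gp,e') = 0$ for all $g$ and $p$, so $F$ vanishes on a dense subset and hence $F = 0$.
\item It is compatible with the $R(\co)$-module structures. Elements of $R(\co) = R(\ol{\co})$ pull back to $H^0(G/P', R(\fu_{P'}))$ along the Springer map $\mu$. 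Multiplication of sections corresponds to multiplication of the restrictions, because restriction to $G \times \{e'\}$ is a ring homomorphism on the untwisted functions.
\end{enumerate}
The $R(\co)$-action on $R(\mcM)$ is pullback along $\widetilde{\co} \to \co$, and this is compatible with $g \mapsto g e'$. So the map is an $R(\co)$-module map, and its image is an $R(\co)$-submodule.

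The main point needing care is the identification of $A(\co)$-characters. The claim that $\phi_F$ transforms by exactly $\chi$ under the right $A(\co)$-action relies on the isomorphism $P'^{e'}/(P'^{e'})_0 \cong A(\co)$ used in Definition \ref{d:P-lift}. That isomorphism is the canonical one induced by the equality $G^{e'} = P'^{e'}$ from Lemma \ref{l:stab}, and it is independent of the choice of $e'$ by the remarks following Definition \ref{d:P-lift}. We also need to verify that the sign conventions (the definition of $\Ind$ via $e^{\lambda}(\ell^{-1})$ and the right action $\varphi \cdot a(x) = \varphi(xa^{-1})$) give $\chi$ rather than $\chi^{-1}$. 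As a cross-check we can restrict to the center $Z$, whose image in $A(\co)$ is all of $A(\co)$. Both sides' $\chi$-components are then detected by the central character, so consistency with Theorem \ref{t:rings} for $(P, \mu_j)$ fixes the convention.
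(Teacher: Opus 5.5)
Your proposal is correct and follows essentially the paper's argument. The paper uses the projection formula to identify $H^0(G/P', R(\fu_{P'})\otimes\C_{\mu'})$ with sections of $\pi^*L_{\mu'}$ on $T^*(G/P')$. It then restricts these sections to the open dense orbit $\co\cong G/G^{e'}$, which is injective because $T^*(G/P')$ is integral, and uses $G^{e'}=P'^{e'}$ together with the lifting hypothesis to get $\Ind_{G^{e'}}^G(\chi)=R(\mcM)_\chi$; your restriction $F\mapsto F(\cdot,e')$ plus the density of $P'e'$ is exactly this in coordinates. Your central-character observation settles the $\chi$ versus $\chi^{-1}$ bookkeeping that the paper leaves implicit. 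Since $Z\subset P'^{e'}$, $Z$ acts on both sides through $e^{\mu'}|_Z$, the pullback of $\chi$, so no appeal to Theorem \ref{t:rings} is needed.
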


\begin{proof}
Since $L'$ is conjugate to $L$, we may assume $L' = L$.  To simplify notation assume $P' = P$ and $\mu' = \mu$.
Let $L_{\mu}$ be the line bundle on $G/P$ corresponding to the representation $\C_{\mu}$ of $P$. Consider the composition 
$$
\co \cong G/G^e \xrightarrow{i} T^*(G/P) \xrightarrow{\pi} G/P,
$$
where $i$ is the natural inclusion and $\pi$ is the natural projection. It can be deduced from the projection formula for the map $\pi$ that $H^0(T^*(G/P), \pi^* L_{\mu}) \cong H^0(G/P, R(\fu) \otimes \C_{\mu})$. Furthermore, since $\co \subset T^*(G/P)$ is an open subset and $T^*(G/P)$ is integral, restriction provides an inclusion 
\begin{equation} \label{resO}
H^0(T^*(G/P), \pi^* L_{\mu}) \subset H^0(\co, \pi^* L_{\mu}|_{\co}).
\end{equation}
However, the composition $\pi \circ i$ is the natural projection $p: G/G^e \to G/P$, so the line bundle $\pi^* L_{\mu}|_{\co}$ is simply the bundle associated to the representation $\C_{\mu}$ restricted to $G^e$. 
The hypothesis that $e^{\mu}$ lifts $\chi$ to $P$ implies that the restriction of $\C_{\mu}$ to $G^e$ is the pullback to $G^e$ of the representation $\chi$ of $G^e/G^e_0 \cong A(\co)$. 
Therefore, $H^0(\co, p^* L_{\co}) \cong \Ind_{G^e}^G(\chi)$, which is the $\chi$-isotypic component $R(\mcM)_{\chi}$ of $R(\mcM) \cong \Ind_{G^e_0}^G(0)$. Thus \eqref{resO} is the desired inclusion, and since restriction induces an isomorphism on $R(T^*(G/P)) = R(\co)$, it is an inclusion of $R(\co)$-modules.
\end{proof}

\begin{remark} \label{r:vanish}
It follows from Lemma \ref{lem:inclusion} and the proof of Theorem \ref{t:rings} that the equations
\begin{equation} \label{e:description}
 R(\mcM)_{\chi} = H^0(G/P, R(\fu) \otimes \C_{\mu}) = \Ind_L^G (\mu)
 \end{equation} 
 will hold precisely when
two conditions are satisfied: The inclusion \eqref{e:inclusion} is an isomorphism (for $P' = P$),
and $H^i(G/P, R(\fu) \otimes \C_{\mu}) = 0$ for $i>0$.  These two conditions
tend to pull in opposite directions.  
If $\mu$ is dominant, then
the cohomology vanishing holds by \cite[Theorem 2.2]{Bro}.  It also holds
for certain nondominant $\mu$.  For example, suppose
$\co$ is principal.  Then $P = B$, and Hesselink's result from \cite{He76}
can be applied to show that
\begin{equation} \label{e:Hesselink}
H^i(G/B, R(\fu) \otimes \C_{w \lambda_j}) = 0 \mbox{  for  } i>0, w \in W
\end{equation}
(see \cite[Theorem 1.3]{Gra1992}).
On the other hand, write $\mu = \sum a_i \ga_i$ where the
$\ga_i$ are the simple roots.  If all $a_i \leq 0$, then the inclusion
\eqref{e:inclusion} is an isomorphism (see \cite{Gra1992}), but this also holds for some other $\mu$.
In fact, in the case of the principal orbit, 
the $w \lambda_j$ are exactly the $\mu$ for which both cohomology vanishing holds,
and \eqref{e:inclusion} is
an isomorphism.  
The cohomology vanishing for the
nondominant weights $\mu_j$ is one of the main consequences of the
geometric constructions of this paper.  
\end{remark}

We conclude this section by discussing the pairs $(L', \nu)$ which are conjugate to $(L, \mu_j)$ or equivalently, to $(M, \lambda_{jn/k})$.
Note that given a fixed $L'$, the possible weights
$\nu$ form a single orbit under the 
relative Weyl group $W(L') := N_G(L')/L'$.  In the following discussion, we assume that the partition $p$ is fixed, and all Levi subgroups considered
are Levi factors of Richardson parabolics for $\co = \co_p$.

\begin{definition}

Let $(A_1, \dots, A_{p_1})$ be the block decomposition of $A \in L$. For $j \in \{0, \dots, k - 1\}$, we say a subset $S \subset \{1, \dots, p_1\}$ is $j$-equally distributed if for every distinct part $a$ of $p'$, with multiplicity $n_{a}$, there are $\frac{j n_a}{k}$ elements of $S$ such that $p'_i = a$. We say that $\nu$ is a $j$-equally distributed weight of $L$ (or simply that the pair $(L, \nu)$ is $j$-equally distributed) if $\nu$ defines a character $e^{\nu}$ of $L$ and $e^{\nu}(A)$ is a product of determinants $\prod_{i \in S} \det A_i$ for some $j$-equally distributed $S \subset \{1, \dots, p_1\}$.  Additionally, if $L'$ is a Levi conjugate to $L$, and $\nu$ is a weight of $L'$, we say the pair $(L', \nu')$ is $j$-equally distributed if it is conjugate to a $j$-equally distributed pair of the form $(L, \nu)$. Finally, we shall say that a pair $(L', \nu)$ is equally distributed if it is $j$-equally distributed for some $j \in \{0, \dots, k - 1\}$.

\end{definition}

Suppose $L'$ is a standard Levi subgroup.  Then $L'$ is block diagonal, and a weight $\nu$ is equally distributed if and only if $e^{\nu}$ is a product of determinants of a proportional number of blocks of each size.
The definition above forces the weight $\nu$ in any equally distributed pair $(L', \nu)$ to be minuscule (or $0$ if $j=0$).   
We will say that a lift $e^{\nu}$ of $\chi$ to $L'$ is minimal if the length of $\nu$ (with respect to a $W$-invariant inner product
on the real span of the weights of $G$) is minimal.  If $L'$ is not assumed standard, we say the lift is minimal $(L', \mu)$ is conjugate to a pair where the Levi subgroup is standard and the lift is minimal.

\begin{proposition} \label{jlifts}

Given a Levi subgroup $L'$ and a weight $\nu$ that defines a character $e^{\nu}$ of $L'$, the following three conditions are equivalent:

\begin{enumerate}

\item The pair $(L', \nu)$ is conjugate to $(L, \mu_j)$.
\item The weight $\nu$ is a $j$-equally distributed weight of $L'$.
\item The character $e^{\nu}$ is a minimal lift of the character $\chi_j$ of $A(\co)$ to $L'$.
\end{enumerate}

\end{proposition}

\begin{proof}
If $j = 0$, then $nu = \mu_j = 0$ and $\chi_0$ is trivial, and  the equivalence is straightforward, so we assume $j \neq 0$.
We may assume that $L' = L$ since all three conditions are preserved by conjugation.  Furthermore, we assume $\nu$ is minuscule since all three conditions require this.
For property (3), this is true because if $e^{\nu}$ is a lift of $\chi_j$ to $L$, then it must induce the central character given by pulling $\chi_j$ back along the surjection $Z \to A(\co)$, which we also refer to as $\chi_j$. 
We have shown that the minuscule weight $\mu_j$ is a lift of $\chi_j$ to $L$.  Since the minuscule weights are minimal among all weights with a given central character, if $\nu$ is a minimal lift of $\chi_j$,
then it must be minuscule.  Since the minuscule weights which induce a given central character form a single $W$-orbit, if (3) holds, then $\nu$ must be in the $W$-orbit of $\mu_j$.

Properties (1) and (2) are equivalent, since the number of determinants of each size appearing in a character $e^{\nu}$ is invariant under conjugation. We now show that (2) and (3) are equivalent.
We can check if $(L, \nu)$ satisfies (3) using the choice of $P$ and $e$ in Section \ref{s:Springer}. We will use the explicit description of $L^e_0$ in \eqref{Le0} to show that $e^{\nu}$ is trivial on $L^e_0$ if and only if $\nu$ is equally distributed, and that $e^{\nu}$ lifts $\chi_j$ if and only if $\nu$ is $j$-equally distributed. 

Under the assumptions of either (2) or (3), 
$\nu$ is minuscule and defines a character $e^{\nu}$ of $L$, so we can expand $e^{\nu}(A)$ as $\prod_{i \in S} \det A_i$ for some proper subset $S \subset \{1, \dots, p_1\}$, where $A_1, \dots A_{p_1}$ are the blocks of $A \in L$. We can further expand $e^{\nu}(A)$ in terms of determinants of the matrices $C_i$ described in Section \ref{s:Springer}, obtaining an equation of the form 
\begin{equation} \label{e-nu}
e^{\nu}(A) = \det(C_1)^{a_1} \det(C_2)^{a_2}  \dots \det(C_{\ell})^{a_{\ell}}.
\end{equation}

Recall that $u_1 > u_2 \dots > u_{\ell}$ are the distinct parts of $p$. 
Equation \eqref{Le0} implies that $e^{\nu}$ is trivial on $L^e_0$ if and only if there is a fixed $j \in \{0, \dots, k - 1\}$ such that $a_i = \frac{j u_i}{k}$ for each $i \in \{1, \dots \ell \}$. In this case, we can check by restricting to the center of $G$ that $e^{\nu}$ lifts the character $\chi_j$ of $A(\co)$. 

Let $t_1, \dots, t_{\ell}$ be the distinct parts of $p'$ in decreasing order.  The multiplicity of the largest part $t_1$ is $n_1 = u_{\ell}$, the next largest part $t_2$ is $n_2 = u_{\ell -1} - u_{\ell}$, and
in general, the multiplicity $n_i$ of $t_i$ is $u_{\ell - i + 1} - u_{\ell - i + 2}$ (with $u_{\ell + 1} = 0$). 
Recall also from the explicit description of $L^e$ in Section \ref{s:Springer} that each $C_i$ appears in the first $u_i$ blocks of $L$. 

Assume $(L, \nu)$ satisfies (2). Then for each $i \in \{1, \dots \ell \}$, $S$ must contain $\frac{j u_i}{k}$ out of the entries $\{1, \dots, u_i\}$. In particular, we will have $a_i = \frac{j u_i}{k}$ in \eqref{e-nu} and thus $e^{\nu}$ is trivial on $L^e_0$, and in particular lifts $\chi_j$. It is a minimal lift because $\nu$ is minuscule, so (2) implies (3). 

Finally, assume $(L, \nu)$ satisfies (3). Thus, in \eqref{e-nu} we have $a_i = \frac{j u_i}{k}$ for each $i \in \{1, \dots \ell \}$. We shall show by induction that for every $t_i$, there are $\frac{j n_{i}}{k}$ elements $s \in S$ such that $p'_s = t_i$. For the base case $i = 1$, recall that $C_{\ell}$ only appears in the first $n_1 = u_{\ell}$ blocks of $L$, so the equation $a_{\ell} = \frac{j u_{\ell}}{k}$ implies that $S$ must contain $\frac{j n_1}{k}$ of the first $n_1$ entries. These are precisely the entries $s \in S$ with $p'_s = t_1$. For the inductive step, assume we have proved the inductive hypothesis for $t_1, \dots, t_{i - 1}$. The matrix $C_{\ell - i + 1}$ only appears in the first $u_{\ell - i + 1}$ blocks of $L$, so the equation $a_{\ell - i + 1} = \frac{j u_{\ell - i + 1}}{k}$ implies that $S$ must contain $\frac{j u_{\ell - i + 1}}{k}$ of the first $u_{\ell - i + 1}$ entries. These are precisely the entries such that $A_i$ has size at least $t_i$. By the inductive hypothesis, $S$ already contains $\frac{j u_{\ell - i + 2}}{k}$ of the first $u_{\ell - i + 2}$ entries, since there are the ones corresponding to blocks of size larger than $t_i$. Furthermore, there are $n_i = u_{\ell - i + 2} - u_{\ell - i + 1}$ blocks of size $t_i$, so $S$ must contain exactly $\frac{j n_i}{k}$ indices $s$ with $p'_s = t_i$. Thus, (3) implies (2).
\end{proof}

Note that Proposition \ref{jlifts} relates the characters appearing in our paper to the description in \cite[Conjecture 4.2]{Som} when the nilpotent orbit $\mcO$ is even. Furthermore, it follows from \eqref{e:ind-conj} and Proposition \ref{jlifts} that $R(\mcM)_{\chi_j} \cong \Ind_{L'}^G(\nu)$ whenever $(L', \nu)$ is $j$-equally distributed. We will explore graded versions of these isomorphisms in the next two sections. 

\begin{example} \label{Ex-lift}
Suppose $G = SL_{9}$ and $p = (6, 3)$.  Then $p' = (2^3, 1^3)$.  The Levi subgroup $L$ has block sizes $(2^3, 1^3)$ while the Levi subgroup $M$ has block sizes $(2,1,2,1,2,1)$.  As characters of the torus, $\mu_1$ and $\mu_2$ are given by $(1^2, 0^4, 1, 0^2)$ and $(1^4, 0^2, 1^2, 0)$, respectively. We can see explicitly that the pairs $(L, \mu_1)$ and $(L, \mu_2)$ are conjugate to $(M, \lambda_3)$ and $(M, \lambda_6)$ respectively. As a further example, if $(L', \nu)$ is 1-equally distributed pair with $L'$ standard, then $L'$ has three blocks of size two and three blocks of size one, and the character $e^{\nu}$ is a product of one block of each size.

We note that given a pair $(L', \nu)$, the equation $R(\mcM)_{\chi_j} \cong \Ind_{L'}^G(\nu)$ can fail if the pair is not conjugate to $(L, \mu_j)$, even if the individual constituents $L'$ and $\nu$ are. For example, consider the weight $\nu = (0^6, 1^3)$ and the pair $(L, \nu)$, which is not equally distributed even though $\nu$ is conjugate to $\mu_1$. One can use the tools described in Remark \ref{r:T-ind} to show that in this case $\Ind_L^G(\mu_1) \neq \Ind_L^G(\nu)$. In particular, the representation $V_{\lambda}$ with highest weight $\lambda := (2,1^2,0^6)$ has multiplicity one in $\Ind_L^G(\mu_1)$ and multiplicity two in $\Ind_L^G(\nu)$. Note that in this example $e^{\nu}$ does not lift a character of $A(\co)$, and this computation shows that the inclusion in Lemma \ref{e:inclusion} cannot hold. We can construct a similar example with $\nu$ dominant by taking $L'$ to be standard with block sizes $(1^3, 2^3)$, and letting $\nu = \lambda_3$. 
\end{example}

\section{The grading on $R(\mcM)$} \label{s:grading}
In this section we study the structure of $R(\mcM)$ as a graded $G$-module,
making use of Theorem \ref{t:rings}.  We continue to write $P = LU$ and
$Q = M U_Q$ for the specific parabolic subgroups defined previously.  Lemma \ref{lem:inclusion} shows
that there is an inclusion of the spaces of sections of line bundles on certain cotangent bundles
as submodules of $A(\co)$-isotypic components
of $R(\mcM)$.  The proof of Theorem \ref{t:rings} shows that for
the line bundles on $T^*(G/P)$ arising from the weights $\mu_j$,
this inclusion is an isomorphism, and the higher cohomology
vanishes.  In this section we sharpen this
statement by incorporating the graded structure, and also extend
it to $T^*(G/Q)$ with the weights $\lambda_{j n/k}$ (see Theorem \ref{t:graded}).
As is known, such results, combined
with Theorem 3.9 and Remark 3.11 of Panyushev's paper \cite{Pan}, are sufficient to describe
the graded $G$-module structure of $R(\mcM)$ in terms of generalizations
of Lusztig's $q$-analogue of Kostant's multiplicity formula.  In addition, we
identify the degrees in which minuscule representations occur in $R(\mcM)$,
and combining our work with a theorem of Grantcharov \cite{Grant}, we describe
the minimal embedding of $\mcM$.  Finally, we show that the natural map
$\mcM \to \overline{\co}$ is a bijection over the boundary of $\co$.

The grading on $R(\mcM)$ arises by lifting the square of the
dilation $\C^*$-action on $\co$ to an action
on $\widetilde{\co}$  (see \cite{BrylinskiKostant1994}).  We will use the following
equivalent formulation.
Choose $h \in \fh$ such that $[h,e] = - 2e$
(the negative sign is included because our nilpotent element lies in the negative Borel).  Define a vector field $\Xi$ on $\widetilde{\co} = G/G^e_0$ by the equation
$$
(\Xi \phi) (g G^e_0) = - \tfrac{1}{2} \tfrac{d}{dt} \phi(g \exp (th) G^e_0) |_{t=0}.
$$
This extends the definition of \cite{Gra1992}, except that we have included an extra factor of $\frac{1}{2}$
to make it compatible with the grading on $R(\overline{\co})$.  
This factor means that our grading is $\frac{1}{2}$ times the grading used in many
references, e.g.~\cite{Bry}, \cite{BrylinskiKostant1994}, \cite{Gra1992}, \cite{Som}.

We say that $\phi \in R^d(\mcM)$ if $\Xi \phi = d \phi$; here 
$d \in \frac{1}{2} \Z$.  The right action of the component group $A(\co)$ on $R(\mcM)$ is compatible with the grading on $R(\mcM)$.  
In type $A$ the compatibility is particularly easy to see, since the natural map $Z \to A(\co)$ (where $Z$ is
the center of $G$) is surjective, so there are representatives in $Z$ of elements of $A(\co)$. 

The grading on $R(\mcM)$ is compatible with the grading on the subring $R(\overline{\co})$ 
inherited from the polynomial ring $R(\fg)$ with generators in degree $1$.  This was proved by Brylinski for the principal orbit (\cite[Lemma 2.4]{Bry}); a proof
in language closer to this paper can be found in \cite[Prop.~2.2]{Gra1992}.   The proof carries through almost unchanged
to arbitrary orbits.  As observed in \cite[Remark 1.5]{BrylinskiKostant1994}, the grading on $R(\mcM)$ is the unique $G$-invariant
grading compatible with the grading on $R(\co)$, and moreover, the grading
is nonnegative.  Note that Brylinski and Kostant work with arbitrary $G$-invariant orbit covers, not only the universal
cover, and this characterization of the grading is valid in that setting.  We will make use of this
in the proof of Theorem \ref{t:bijection}.  

Brylinski shows that when $e^{\mu}$ is a character of $L$ lifting the character $\chi$ of $A(\co)$, Lemma \ref{lem:inclusion} can be upgraded to a graded inclusion
 \begin{equation} \label{e:gradingP}
H^0(G/P, R^d(\fu) \otimes \C_{\mu}) \subset R^{d + \mu(h)/2}(\mcM)_{\chi}.
\end{equation}
See \cite[Prop.~3.3]{Bry}; a proof can also be found in \cite[Prop.~2.1]{Gra1992}.
Although these references deal with the principal orbit, the proof extends to arbitrary orbits.
The analogous statement holds if we replace $P = LU$ by the parabolic
$Q = M U_Q$.  Choose $e' \in \co \cap \fu_Q$ and $h' \in \fh$ such that
$[h', e'] = - 2 e'$.  Let $e^{\mu'}$ be a character of $M$ lifting the character $\chi$ of $A(\co)$.  Then 
we can similarly upgrade Lemma \ref{lem:inclusion} to a graded inclusion
\begin{equation} \label{e:gradingQ}
H^0(G/Q, R^d(\fu_Q) \otimes \C_{\mu'}) \subset R^{d + \mu'(h')/2}(\mcM)_{\chi}.
\end{equation}
%Using these facts, we have the following graded version of \eqref{e:cohzero-equal}. 
These facts imply the following graded analogues of \eqref{e:coh-equal}.

\begin{theorem} \label{t:graded}
Set $d_j = d - \frac{1}{2} \mu_j(h)$ and $d'_j = d - \frac{1}{2} \lambda_{jn/k}(h')$.  Then
\begin{align}
R^d(\mcM)  & \cong  \bigoplus_{j=0}^{k-1} H^0(G/P, R^{d_j}(\fu) \otimes \C_{\mu_j})   \label{e:graded1}  \\ 
 & \cong  \bigoplus_{j=0}^{k-1} H^0(G/Q, R^{d'_j}(\fu_Q) \otimes \C_{\lambda_{jn/k}}). \label{e:graded2}
\end{align}
The isotypic component $R^d(\mcM)_{\chi_j}$ corresponds to the $j$-th term in either sum on the right.
\end{theorem}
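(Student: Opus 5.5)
The plan is to prove \eqref{e:graded1} first, by combining the graded inclusions \eqref{e:gradingP} with the ungraded equality of Theorem \ref{t:rings}. Then I will deduce \eqref{e:graded2} from \eqref{e:gradingQ} together with a dimension count that compares $Q$ with $P$ through conjugation of the Levi pairs.

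For \eqref{e:graded1}, fix $j$. The character $e^{\mu_j}$ lifts $\chi_j$ to $P$, so \eqref{e:gradingP} gives, for every $d$, an inclusion $H^0(G/P, R^{d_j}(\fu)\otimes\C_{\mu_j}) \subset R^d(\mcM)_{\chi_j}$. Summing over $d$ gives the inclusion $H^0(G/P, R(\fu)\otimes\C_{\mu_j}) \subset R(\mcM)_{\chi_j}$. The proof of Theorem \ref{t:rings}, using $R(\mcM)=H^0(\widetilde{\mcM},\co_{\widetilde{\mcM}})$ and Lemma \ref{l:P-mod}, shows that this inclusion is an equality; indeed the identification there is literally the restriction map of Lemma \ref{lem:inclusion}. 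Since each graded piece of an inclusion of graded spaces injects, and the total spaces agree, each graded piece must be an equality. One small point needs care: both sides are infinite-dimensional, so I would argue at the level of $G$-isotypic components. Each $V_\lambda$ has finite multiplicity in each graded piece, and in fact in $R(\mcM)$ as a whole, because $R(\mcM)$ is a finite $R(\co)$-module and $R(\co)$ has finite multiplicities by McGovern. Hence equality of the ungraded spaces forces equality degree by degree. Summing over $j$ gives \eqref{e:graded1}, and the $j$-th term is $R^d(\mcM)_{\chi_j}$.

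For \eqref{e:graded2}, I first need that $e^{\lambda_{jn/k}}$ lifts $\chi_j$ to $Q$. The pair $(M,\lambda_{jn/k})$ is conjugate to $(L,\mu_j)$ (proof of Theorem \ref{t:rings-dom}), so Proposition \ref{p:conjugate} says $\lambda_{jn/k}$ lifts $\chi_j$ to $M$ via some Richardson parabolic $C_gP\supset M$. Proposition \ref{p:independent} then transfers the lift to the parabolic $Q$, which also has $M$ as a Levi factor and is Richardson because its block sizes are a permutation of $p'$. Now \eqref{e:gradingQ} gives graded inclusions $H^0(G/Q, R^{d'_j}(\fu_Q)\otimes\C_{\lambda_{jn/k}}) \subset R^d(\mcM)_{\chi_j}$.

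To upgrade these to equalities, I would compare multiplicities in the ungraded setting. Since $\lambda_{jn/k}$ is dominant, \cite[Theorem 2.2]{Bro} gives $H^i(G/Q, R(\fu_Q)\otimes\C_{\lambda_{jn/k}})=0$ for $i>0$. By \cite[Lemma 2.1]{McG}, $H^0$ therefore equals $\Ind_M^G(\lambda_{jn/k})$, which by Theorem \ref{t:rings-dom} is $R(\mcM)_{\chi_j}$ as a $G$-module. An injection of $G$-modules with equal finite multiplicities of every irreducible is an isomorphism, and the graded refinement follows exactly as in the $P$ case.

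The main obstacle is the vanishing of higher cohomology on $G/Q$. If citing Broer for dominant characters twisted by $R(\fu_Q)$ is not available in the required form, I would fall back on a conjugation argument relating $T^*(G/Q)$ and $T^*(G/P)$, for instance via a Mukai-flop style comparison of Euler characteristics. Only the Euler characteristic identity $\sum_i(-1)^iH^i = \Ind$ is really needed, combined with the inclusion and the nonnegativity of multiplicities in $H^0$.
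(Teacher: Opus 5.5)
Your proposal is correct and follows the paper's own proof: the ungraded equality from the proof of Theorem \ref{t:rings} together with the graded inclusions \eqref{e:gradingP} gives \eqref{e:graded1}. For $Q$, Broer's vanishing and the conjugacy of $(M,\lambda_{jn/k})$ with $(L,\mu_j)$ upgrade the inclusion \eqref{e:gradingQ} to an equality, degree by degree; your explicit appeal to Propositions \ref{p:conjugate} and \ref{p:independent} to see that $\lambda_{jn/k}$ lifts $\chi_j$ to $Q$ is exactly what Section \ref{s:lifting} supplies.

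The fallback in your last paragraph is unnecessary, and as stated it would not work. The Euler characteristic identity always holds by McGovern's lemma, and together with the inclusion $H^0 \subset R(\mcM)_{\chi_j}$ it does not force the higher cohomology on $G/Q$ to vanish, which is the input actually required.
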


\begin{proof}
The only surviving terms in \eqref{e:coh-equal} occur when $i = 0$; then the left hand side is $R(\mcM)$, so
\begin{equation} \label{e:cohzero-equal}
R(\mcM) \cong  \bigoplus_{j=0}^{k-1} H^0(G/P, R(\fu) \otimes \C_{\mu_j}).
\end{equation}
The isomorphism \eqref{e:graded1} follows from this and \eqref{e:gradingP}.  For \eqref{e:graded2},
we have a sequence of $G$-module isomorphisms
\begin{equation} \label{L-to-M}
H^0(G/P, R(\fu) \otimes \C_{\mu_j}) \cong \Ind_L^G(\mu_j) \cong \Ind_{M}^G (\lambda_{j n/k}) \cong H^0(G/Q, R(\fu_Q) \otimes \C_{\lambda_{j n/k}}).
\end{equation}
Here, the first isomorphism holds because $H^i(G/P, R(\fu) \otimes \C_{\mu_j}) = 0$ for $i>0$, and the third isomorphism holds because $\lambda_{j n/k}$ is dominant, so
$H^i(G/Q, R(\fu_Q) \otimes \C_{\lambda_{j n/k}}) = 0$ for $i>0$ by \cite[Theorem 2.2]{Bro}.  The middle isomorphism
follows from \eqref{e:ind-conj} since $(M, \lambda_{j n/k})$ and $(L, \mu_j)$ are conjugate. 
Combining \eqref{e:cohzero-equal} with the relations between gradings described in \eqref{e:gradingP} and  \eqref{e:gradingQ} shows
that $H^0(G/P, R^{d_j}(\fu) \otimes \C_{\mu_j}) \cong  H^0(G/Q, R^{d'_j}(\fu_Q) \otimes \C_{\lambda_{jn/k}})$.  Combining this
with \eqref{e:graded1} yields \eqref{e:graded2}.  The statement about isotypic components follows from \eqref{e:gradingP} and  \eqref{e:gradingQ}.
\end{proof}

\begin{remark} \label{r:dominant}
The proof of the preceding theorem generalizes to give an analogous result 
for the $\chi_j$-isotypic part $R(\mcM)_{\chi_j}$ (for any $j$), in which 
$Q = M U_Q$ is replaced by any standard parabolic 
$P' = L' U'$ such that
$(L', \lambda_{j n/k})$ is conjugate to $(L, \mu_j)$.
The reason is that $\lambda_{j n/k}$ is dominant, so the
necessary vanishing of higher cohomology holds by \cite[Theorem 2.2]{Bro}.
\end{remark}

\begin{remark} \label{r:vanish2}
A key point in the proof of \eqref{e:graded2} 
 is that the inclusion $H^0(G/Q, R(\fu_Q) \otimes \C_{\lambda_{j n/k}}) \subset R(\mcM)_{\chi_j}$
is an isomorphism (cf~Remark \ref{r:vanish}).  In fact, we
need our main result Theorem \ref{t:rings} to
show that this inclusion is an isomorphism.  
\end{remark}

If $\co$ is understood, we write $\deg (V_{\lambda_{j n/k}})$ for the degree in which 
the minuscule representation $V_{\lambda_{j n/k}}$ occurs in $R(\mcM)$.  
The next corollary and the following proposition identify this degree, generalizing \cite{Gra1992} for the principal orbit.

\begin{corollary} \label{c:minuscule}
The representation $V_{\lambda_{j n/k}}$ occurs exactly once
in $R(\mcM)$, in degree $\tfrac{1}{2} \lambda_{j n/k} (h')$,
where $h'$ is as above.  Moreover, if $V$ is another irreducible
$G$-module occurring in $R^d(\mcM)_{\chi_j}$, then 
$d > \tfrac{1}{2} \lambda_{j n/k} (h')$.
\end{corollary}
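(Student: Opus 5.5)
We would work entirely with the second decomposition \eqref{e:graded2} of Theorem \ref{t:graded}. The plan is to locate $V_{\lambda_{jn/k}}$ in the $j$-th summand and then bound the degrees of everything else in that summand.

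\textbf{Step 1: the minuscule representation lies in a single summand.} The center $Z$ acts on $V_{\lambda_{jn/k}}$ by the central character of $\lambda_{jn/k}$. Since $Z \to A(\co)$ is surjective, any copy of $V_{\lambda_{jn/k}}$ in $R(\mcM)$ lies in $R(\mcM)_{\chi_j}$. The restrictions of the $\lambda_{in/k}$ to $Z$ are distinct for distinct $i$, so the copy cannot sit in any other isotypic component. By Theorem \ref{t:graded} it therefore suffices to study
\[
H^0(G/Q, R(\fu_Q)\otimes \C_{\lambda_{jn/k}}) = \bigoplus_{e\ge 0} H^0(G/Q, R^{e}(\fu_Q)\otimes \C_{\lambda_{jn/k}}).
\]
The $e$-th piece sits in degree $e+\tfrac12\lambda_{jn/k}(h')$. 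Here $R(\fu_Q)$ is nonnegatively graded with generators in degree $1$.

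\textbf{Step 2: the bottom degree.} For $e=0$ the piece is $H^0(G/Q,\C_{\lambda_{jn/k}})$. Because $\lambda_{jn/k}$ is dominant, Borel--Weil (with our sign conventions, $Q$ containing the negative Borel) identifies this with the irreducible module $V_{\lambda_{jn/k}}$. If the convention produces the dual, we reindex $j \mapsto k-j$; this is harmless. So $V_{\lambda_{jn/k}}$ occurs in degree $\tfrac12\lambda_{jn/k}(h')$, and the $e=0$ piece contains nothing else.

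\textbf{Step 3: no further copies, and higher degrees elsewhere.} Each piece with $e>0$ has degree strictly greater than $\tfrac12\lambda_{jn/k}(h')$, which gives the final assertion for every $V$ other than this bottom copy. It remains to show that $V_{\lambda_{jn/k}}$ does not recur for $e>0$. We count its total multiplicity in $\Ind_M^G(\lambda_{jn/k})$ by Frobenius reciprocity: it equals $\dim\,(V_{\lambda_{jn/k}}^*\otimes \C_{\lambda_{jn/k}})^M$, i.e.\ the dimension of the subspace of $V_{\lambda_{jn/k}}$ on which $M$ acts by the appropriate character $e^{\pm\lambda_{jn/k}}$ (up to duality convention). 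The weights of a minuscule module are a single $W$-orbit, each with multiplicity one. Any $M$-semi-invariant of that character must be an $H$-weight vector of weight in $W\lambda_{jn/k}$ equal to the character's weight, and there is exactly one such line. The total multiplicity is therefore $1$, so it is already used up at $e=0$.

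\textbf{Main obstacle.} We expect the convention and sign bookkeeping in Steps 2 and 3 to be the hardest part. One must check that with the negative Borel and the induction convention $f(g\ell)=e^{\lambda}(\ell^{-1})f(g)$, the $e=0$ term is $V_{\lambda_{jn/k}}$ itself rather than its dual, and that Frobenius reciprocity uses the matching character. If these conventions yield the dual, the argument is unchanged after relabeling $j \mapsto k-j$, which exchanges $\lambda_{jn/k}$ with its dual.
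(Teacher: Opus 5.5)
Your proposal is correct and follows essentially the same route as the paper: central characters confine $V_{\lambda_{jn/k}}$ to the $j$-th summand of \eqref{e:graded2}. Frobenius reciprocity applied to $\Ind_M^G(\lambda_{jn/k})$, together with the one-dimensionality of the $\lambda_{jn/k}$-weight space, gives multiplicity one. The copy is $H^0(G/Q,\C_{\lambda_{jn/k}})$ in the bottom degree, and everything else comes from $R^i(\fu_Q)$ with $i>0$. Your hedging about duals is unnecessary and the relabeling $j\mapsto k-j$ is never needed: with $Q$ containing the negative Borel and the convention $f(g\ell)=e^{\lambda}(\ell^{-1})f(g)$, Frobenius reciprocity counts the character $e^{\lambda_{jn/k}}$ itself in $V_{\lambda_{jn/k}}|_M$, and $H^0(G/Q,\C_{\lambda_{jn/k}})\cong V_{\lambda_{jn/k}}$.
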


\begin{proof}
The representation $V_{\lambda_{j n/k}}$ can occur in \eqref{e:graded2} only in the summand
$\Ind_{M}^G (\lambda_{j n/k})$, because this is the only summand 
which transforms under $Z$ by the character
$\chi_j = e^{\lambda_{j n/k} }|_Z$ by which $Z$ acts on $V_{\lambda_{j n/k}}$.
Frobenius reciprocity implies that the multiplicity
of $V_{\lambda_{j n/k}}$ in $\Ind_{M}^G (\lambda_{j n/k})$ equals
the dimension of the $\lambda_{j n/k}$-isotypic component of $V_{\lambda_{j n/k}} |_M$.
This isotypic component is the highest weight space, which is $1$-dimensional,
so $V_{\lambda_{j n/k}}$ occurs once in $R(\mcM)$.  This representation
occurs in $R(\mcM)$ as
$$
H^0(G/Q, \C_{\lambda_{j n/k}}) = H^0(G/Q, R^0(\fu_Q) \otimes \C_{\lambda_{j n/k}}),
$$
so by Theorem \ref{t:graded}, it occurs in degree $\tfrac{1}{2} \lambda_{j n/k} (h')$.
For the second statement, if $V$ occurs in $R^d(\mcM)_{\chi_j}$,
then it must occur in $H^0(G/Q, R^i(\fu_Q) \otimes \C_{\lambda_{j n/k}})$ for
some $i>0$.  By \eqref{e:gradingQ}, $d = i + \tfrac{1}{2} \lambda_{j n/k} (h') > \tfrac{1}{2} \lambda_{j n/k} (h') $.
\end{proof}

\begin{proposition} \label{p:closed-formula}
\begin{equation}\label{e:closed formula}
\lambda_{jn/k}(h') =  j(k-j) \sum_{i=1}^t (p_i/k)^2 = \frac{j}{k} (1 - \frac{j}{k}) \sum_{i=1}^t p_i^2,
\end{equation}
where $t$ is the number of parts of $p$.  Hence, for $r \in \Q$,
\begin{equation}\label{e:degree-relation2}
\deg (V_{\lambda_{r n}}) = \frac{1}{2} r (1-r) \sum_{i=1}^t p_i^2.
\end{equation}
Therefore,
\begin{equation}\label{e:degree-relation}
\deg (V_{\lambda_{j n/k}}) = \deg (V_{\bar{\lambda}_j})  \sum_{i=1}^t(p_i/k)^2
\end{equation}
where on the right hand side, $\deg (V_{\bar{\lambda}_j}) $ refers to the degree
with respect to the principal nilpotent orbit in $SL_k$. 
\end{proposition}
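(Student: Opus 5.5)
The plan is to compute $\lambda_{jn/k}(h')$ directly, for one explicit choice of $e' \in \co \cap \fu_Q$ and $h' \in \fh$ with $[h',e'] = -2e'$.

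\textbf{Independence of the choice.} By Corollary \ref{c:minuscule}, $\tfrac12\lambda_{jn/k}(h')$ is the intrinsic degree $\deg(V_{\lambda_{jn/k}})$. So the value does not depend on which valid pair $(e',h')$ we pick, and we may choose the most convenient one.

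\textbf{Construction of $e'$.} Mimic the element $e_\sigma$ from the proof of Proposition \ref{p:independent}, with $\sigma$ the permutation taking the block order of $L$ to that of $M$. Fix a standard basis adapted to the blocks of $M$. For each part $p_i$ of $p$, build a chain of basis vectors $v_{i,0}, v_{i,1}, \dots, v_{i,p_i-1}$, one vector in each Levi block the chain visits, taken in the left-to-right block order of $Q$. Let $e'$ send $v_{i,r} \mapsto v_{i,r+1}$ and $v_{i,p_i-1} \mapsto 0$.

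Since later blocks correspond to lower-triangular positions, $e' \in \fu_Q$. It has Jordan type $p$, so $e' \in \co$.

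\textbf{Which blocks each chain visits.} In the $P$-ordering, the chain for $p_i$ visits the first $p_i$ blocks. In the $Q$-ordering, the block sizes are $k$ concatenated copies of $d$, each copy listed in decreasing order. The blocks large enough to contain a vector of the $i$th chain are therefore exactly the first $p_i/k$ blocks of each copy of $d$. This uses that each part of $p'$ has multiplicity divisible by $k$, equivalently $k \mid p_i$.

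I expect this bookkeeping to be the main point needing care: checking that the chains can be laid out this way compatibly with the block sizes, and that the resulting $e'$ really lies in $\fu_Q$.

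\textbf{Choice of $h'$.} Take $h'$ diagonal, acting on $v_{i,r}$ by $p_i - 1 - 2r$. This is the standard $\mathfrak{sl}_2$ neutral element on each Jordan block, with the sign adjusted for the negative Borel. Then $[h',e'] = -2e'$, and $h'$ has trace zero.

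\textbf{Evaluating the weight.} Because $\operatorname{tr} h' = 0$ and $\lambda_{jn/k} = (1^{jn/k}, 0^{n-jn/k})$, the value $\lambda_{jn/k}(h')$ is the sum of the first $jn/k$ diagonal entries of $h'$. Those coordinates are exactly the first $j$ copies of $d$. By the previous step, the $i$th chain contributes its first $m_i = jp_i/k$ vectors there, with eigenvalues $p_i - 1, p_i - 3, \dots$. Their sum is
\[
m_i(p_i - 1) - m_i(m_i - 1) = m_i(p_i - m_i) = j(k-j)(p_i/k)^2.
\]
Summing over $i$ gives \eqref{e:closed formula}.

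\textbf{The degree formulas.} Formula \eqref{e:degree-relation2} follows from Corollary \ref{c:minuscule} with $r = j/k$. For \eqref{e:degree-relation}, apply \eqref{e:degree-relation2} to the principal orbit of $SL_k$, i.e. $p = (k)$. This gives $\deg(V_{\bar\lambda_j}) = \tfrac12 j(k-j)$, and dividing \eqref{e:degree-relation2} by it yields the factor $\sum_i (p_i/k)^2$.
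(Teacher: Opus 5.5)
Your proposal is correct and is essentially the paper's proof. Your chains are exactly the rows of the paper's filling of $q'$: the $i$th row meets the first $p_i/k$ columns of each copy of $d'$. Your $h'$ is the paper's second filling. The only difference is cosmetic: you sum the first $jp_i/k$ terms of $p_i-1, p_i-3, \dots$ in one step as $m_i(p_i-m_i)$, where the paper sums copy by copy. Your appeal to Corollary \ref{c:minuscule} to show that $\lambda_{jn/k}(h')$ does not depend on the choice of $(e',h')$ makes explicit a point the paper leaves implicit.
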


\begin{proof}
If $d$ is the partition used in the definition of $M$, then its transpose $d'$ is $\tfrac{1}{k} p = (\tfrac{p_1}{k}, \tfrac{p_2}{k}, \ldots, )$.  Divide the
Young diagram of $p$ into the shape $q'$, which we view as $k$ copies of the Young diagram of $d'$, by sliding boxes to the right.

Now fill the boxes of $q'$ with the entries $1, \ldots, n$ by filling in numbers consecutively down columns, and then proceeding along
columns from left to right.  An element \(e'\in\mcO\cap\fu_Q\) is given by the sum of $E_{sr}$, where \(s\) immediately follows \(r\) in a row of \(q'\).
An example is given below for concreteness.  

\begin{example}
Let \(p=(6,3^2)\) be a partition of \(n=12\) with \(k=3\).
Then \(p\) on the left and \(q'=(d',d',d') \) on the right is illustrated by
\[
\begin{tikzpicture}[
    x=0.65cm,
    y=0.65cm,
    every node/.style={font=\small}
]

% -------------------------------------------------
% The Young diagram p = (6^2,4^2,2^2)
% -------------------------------------------------
\begin{scope}
    \foreach \row/\length in {
        0/6,
        1/3,
        2/3
    }{
        \foreach \col in {0,...,\numexpr\length-1\relax}{
            \draw (\col,-\row) rectangle ++(1,-1);
        }
    }
\end{scope}

% -------------------------------------------------
% First copy of d' = (2,1^2)
% -------------------------------------------------
\begin{scope}[xshift=8cm]
    \foreach \row/\length in {
        0/2,
        1/1,
        2/1
    }{
        \foreach \col in {0,...,\numexpr\length-1\relax}{
            \draw (\col,-\row) rectangle ++(1,-1);
        }
    }

    % First column
    \foreach \row/\entry in {
        0/1,
        1/2,
        2/3
    }{
        \node at (0.5,-\row-0.5) {\entry};
    }

    % Second column
    \foreach \row/\entry in {
        0/4
    }{
        \node at (1.5,-\row-0.5) {\entry};
    }
\end{scope}

% -------------------------------------------------
% Second copy of d' = (2,1^2)
% -------------------------------------------------
\begin{scope}[xshift=9.7cm]
    \foreach \row/\length in {
        0/2,
        1/1,
        2/1
    }{
        \foreach \col in {0,...,\numexpr\length-1\relax}{
            \draw (\col,-\row) rectangle ++(1,-1);
        }
    }

    % First column
    \foreach \row/\entry in {
        0/5,
        1/6,
        2/7
    }{
        \node at (0.5,-\row-0.5) {\entry};
    }

    % Second column
    \foreach \row/\entry in {
        0/8
    }{
        \node at (1.5,-\row-0.5) {\entry};
    }
\end{scope}

% -------------------------------------------------
% Third copy of d' = (2,1^2)
% -------------------------------------------------
\begin{scope}[xshift=11.4cm]
    \foreach \row/\length in {
        0/2,
        1/1,
        2/1
    }{
        \foreach \col in {0,...,\numexpr\length-1\relax}{
            \draw (\col,-\row) rectangle ++(1,-1);
        }
    }

    % First column
    \foreach \row/\entry in {
        0/9,
        1/10,
        2/11
    }{
        \node at (0.5,-\row-0.5) {\entry};
    }

    % Second column
    \foreach \row/\entry in {
        0/12
    }{
        \node at (1.5,-\row-0.5) {\entry};
    }
\end{scope}
\end{tikzpicture}.
\]
Note that boxes in different copies of $d'$ can still be viewed as on the same row in $q'$.
For example, $2$ and $6$ are in the same row of $q'$, as are $8$ and $9$,
and $E_{62}$ and $E_{98}$ are terms in the expression for $e'$.
In this example above, \(n/k=4\), and \(\lambda_{4}(h')=12=\lambda_{8}(h')\).
\end{example}

Now define $h' \in \fh$ as follows.  Fill in the boxes of $q'$ in a different way, by filling in the
boxes in row $i$ with $p_i -1, p_i -3, \ldots, - (p_1 - 1)$.  Then $h'_i$ is the number
in the second filling in the box with $i$ in the first filling.
In our example, \(h'=(5,2,2,3,1,0,0,-1,-3,-2,-2,-5)\).  By construction, $[h', e'] = - 2 e'$.

We now prove \eqref{e:closed formula}.  Observe that $\lambda_{j n/k} (h')$
is the sum of the entries in the second filling in the first $j$ copies of $d'$.
We evaluate this sum as follows.

First, consider the entries in the $i$-th row of the first copy of $d'$.  There are $\tfrac{p_i}{k}$ entries,
and summing them gives
$$
(p_i - 1) + (p_i - 3) + \cdots + (p_i - (2 \frac{p_i}{k} - 1) ) = \frac{p_i^2}{k^2} (k-1).
$$
If we do the same for the second copy of $d'$, we get 
$$
(p_i - 1 - 2 \frac{p_i}{k} ) + (p_i - 3- 2 \frac{p_i}{k} ) + \cdots + (p_i - (2 \frac{p_i}{k} - 1)- 2 \frac{p_i}{k}  ) = \frac{p_i^2}{k^2} (k-3).
$$
Similarly, the entries in the $i$-th row of the $r$-th copy of $d'$ add to $\frac{p_i^2}{k^2} (k-(2r-1))$.  When we
sum these over the first $j$ copies of $d'$, we get
$$
\frac{p_i^2}{k^2} \sum_{r = 1}^j (k - (2r-1)) = \frac{p_i^2}{k^2} j (k-j).
$$
Taking the sum of this over the $t$ rows of $q$ gives \eqref{e:closed formula}; then
\eqref{e:degree-relation2} is an immediate consequence.  Equation
\eqref{e:degree-relation} follows because $\deg (V_{\bar{\lambda}_j})$
can be calculated by taking $n=k$ and $p = (k)$ in \eqref{e:closed formula},
yielding $\deg (V_{\bar{\lambda}_j}) =\frac{1}{2}  j (k-j)$.
%(as also follows from \cite[Prop.~2.3]{Gra1992}).
\end{proof}

Using Theorem \ref{t:graded} and a theorem of Grantcharov (see \cite{Grant}), we can describe generators 
of $R(\mcM)_{\chi_j}$ as an $R(\co)$-module.

\begin{theorem} \label{t:generate}
$R(\mcM)_{\chi_j}$ is generated as $R(\co)$-module
by the minuscule representation $V_{\lambda_{jn/k}}$ it contains. 
\end{theorem}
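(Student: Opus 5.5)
The strategy is to identify $R(\mcM)_{\chi_j}$ with the space of sections of a line bundle on $T^*(G/Q)$. Then generation over $R(\co)$ becomes a surjectivity statement for a multiplication map, which is supplied by Grantcharov's theorem.

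By Theorem \ref{t:graded} (the $j$-th term of \eqref{e:graded2}) together with the inclusion \eqref{e:gradingQ}, the graded isomorphism is
\[
R(\mcM)_{\chi_j} \cong H^0(G/Q, R(\fu_Q)\otimes \C_{\lambda_{jn/k}}) = H^0(T^*(G/Q), \pi_Q^*L_{\lambda_{jn/k}}).
\]
This is an isomorphism of $R(\co)$-modules, by Lemma \ref{lem:inclusion}, since $R(\co)=R(T^*(G/Q))$ via the Springer resolution $T^*(G/Q)\to\overline{\co}$. The degree-zero part $H^0(G/Q, R^0(\fu_Q)\otimes\C_{\lambda_{jn/k}}) = H^0(G/Q, L_{\lambda_{jn/k}})$ is exactly the copy of $V_{\lambda_{jn/k}}$ singled out in Corollary \ref{c:minuscule}. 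By Corollary \ref{c:minuscule} this is the unique copy in $R(\mcM)$, sitting in the lowest degree of the $\chi_j$-component. The claim is therefore equivalent to surjectivity of the multiplication map
\[
R(\co)\otimes H^0(G/Q, L_{\lambda_{jn/k}}) \longrightarrow H^0(T^*(G/Q), \pi_Q^* L_{\lambda_{jn/k}}).
\]
Here $R(\co) = H^0(G/Q, R(\fu_Q))$, because the higher cohomology of the structure sheaf vanishes.

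The main step is to establish this surjectivity. Since $\lambda_{jn/k}$ is dominant, and in fact minuscule, this is exactly the situation of Grantcharov's result \cite{Grant}. Its content is that for a standard parabolic $Q$ with a dominant weight $\lambda$ satisfying suitable hypotheses (for example, $\lambda$ minuscule, or the relevant Koszul-type cohomology vanishing), the natural map $H^0(T^*(G/Q),\co)\otimes H^0(G/Q,L_\lambda)\to H^0(T^*(G/Q),\pi_Q^*L_\lambda)$ is surjective. I would verify its hypotheses for $(Q,\lambda_{jn/k})$; this verification is the step I expect to be the main obstacle, since it depends on the precise form of the hypotheses in \cite{Grant}. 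If it requires normality or cohomology vanishing, I would supply these from \cite[Theorem 2.2]{Bro} (using that $\lambda_{jn/k}$ is dominant) and from the normality of $\overline{\co}$ in type $A$ \cite{KP}.

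If a direct citation is not available, my fallback is the following. Filter $R(\fu_Q)$ by degree. Then use the exact sequences $0\to \fu_Q^*\otimes S^{d-1}\fu_Q^*\to \dots$, coming from the Koszul resolution of the zero section, to reduce surjectivity in each degree to the vanishing of $H^i(G/Q, \wedge^i \fu_Q^* \otimes S^{d-i}\fu_Q^*\otimes\C_{\lambda})$-type groups. These vanishings follow from Broer-type arguments for dominant $\lambda$. Once surjectivity holds, the theorem follows immediately by transporting it through the isomorphism above, which is compatible with the $R(\co)$-module structures.
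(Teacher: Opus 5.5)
Your proposal is correct and is essentially the paper's proof. The paper likewise identifies $R(\mcM)_{\chi_j}$ with $H^0(T^*(G/Q), R(\fu_Q)\otimes\C_{\lambda_{jn/k}})$ via Theorem \ref{t:graded}, cites Grantcharov for surjectivity of $R(\fg)\otimes V_{\lambda_{jn/k}}$ onto this space, and notes that the map factors through $R(\fg)\to R(\co)$, so your Koszul fallback is not needed.

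One small correction: $R(\co)=H^0(G/Q,R(\fu_Q))$ holds because $T^*(G/Q)\to\overline{\co}$ is proper birational onto the normal variety $\overline{\co}$. It is not a consequence of higher cohomology vanishing.
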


\begin{proof}
It follows from \cite{Grant} that the natural map
$$
R(\fg) \otimes V_{\lambda_{j n/k}} \to H^0(T^*(G/Q), R(\fu_Q) \otimes \C_{\lambda_{j n/k}})
$$
is surjective.  By Theorem \ref{t:graded}, the target is $R(\mcM)_{\chi_j}$.  The map factors through the natural map
$R(\fg) \to R(\co)$.  Hence the map
$$
R(\co) \otimes V_{\lambda_{j n/k}} \to R(\mcM)_{\chi_j}
$$
is surjective.
\end{proof}

Let $\fn$ denote the maximal ideal of $R(\mcM)$ generated by homogeneous elements of positive degree.
If $\fn = \fa \oplus \fn^2$ is a $G$-module decomposition, then there is an embedding
$\mcM \hookrightarrow \fa^*$, which is the minimal embedding of $\mcM$ into a $G$-representation
(see \cite{BrylinskiKostant1994}, Lemma 4.10 and Proposition 4.11).
The following corollary gives a precise description of the minimal embedding.  Note that
$R(\co)$ contains $\fg^*$ in degree $1$ (since $R(\co)$ is a quotient
of $R(\fg) = S(\fg^*)$). Since $\fg^* \cong \fg$ as $G$-modules, we
view $\fg$ as a subspace of $R^1(\co)$.

\begin{corollary} \label{c:minimal}
If $n >2$, then 
$$
\fa  =  \fg \oplus (\bigoplus_j V_{\lambda_{j n/k}}).
$$
\end{corollary}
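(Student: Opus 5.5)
We must identify $\fa\cong\fn/\fn^2$ as a $G$-module, i.e. a minimal homogeneous generating set of $R(\mcM)$ as a $\C$-algebra. Our plan is to combine Theorem \ref{t:generate} with the known description of $R(\co)$ in type $A$, then show that no proposed generator is redundant by a degree and central-character count.

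\emph{Step 1 (upper bound).} Decompose $R(\mcM)=\bigoplus_j R(\mcM)_{\chi_j}$. For $j=0$, $R(\mcM)_{\chi_0}=R(\co)=R(\overline{\co})$ by \cite{KP}, which is a quotient of $S(\fg^*)$. Hence $R(\mcM)_{\chi_0}$ is generated by $\fg\subset R^1(\co)$. For $j\neq 0$, Theorem \ref{t:generate} gives $R(\mcM)_{\chi_j}=R(\co)\cdot V_{\lambda_{jn/k}}$. It follows that $\fn=\fn_0+\sum_{j\ne 0}R(\co)V_{\lambda_{jn/k}}$. Since $R(\co)V_{\lambda_{jn/k}}\subset V_{\lambda_{jn/k}}+\fn^2$, we get $\fn=\fg+\sum_{j\ne0}V_{\lambda_{jn/k}}+\fn^2$. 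Therefore $\fa$ is a quotient of $\fg\oplus\bigoplus_{j\neq0}V_{\lambda_{jn/k}}$, and each summand here occurs with multiplicity one in $R(\mcM)$: for $\fg$ this holds in degree $1$, and for the minuscule modules it is Corollary \ref{c:minuscule}.

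\emph{Step 2 (each summand survives).} It remains to show that no summand lies in $\fn^2$. Since $\fn^2$ is a $G\times A(\co)$-stable graded subspace and each summand is irreducible, it suffices to show that each summand does not occur in $\fn^2$ in its degree and $\chi$-component.
\begin{itemize}
\item For $V_{\lambda_{jn/k}}$ with $j\neq0$, we write $\fn^2_{\chi_j}=\sum_{a+b\equiv j}\fn_{\chi_a}\fn_{\chi_b}$. A product lands in $R(\mcM)_{\chi_j}$ as an $R(\co)$-submodule. If $a=0$, the factor from $\fn_{\chi_0}$ has positive degree, so the product lies in $\fn_0 R(\mcM)_{\chi_j}$; by Corollary \ref{c:minuscule} it has degree strictly greater than $\deg V_{\lambda_{jn/k}}$ in the $V_{\lambda_{jn/k}}$-isotypic part, because $V_{\lambda_{jn/k}}$ occurs exactly once, at the minimal degree. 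If $a,b\neq0$, the product has degree at least $\deg V_{\lambda_{an/k}}+\deg V_{\lambda_{bn/k}}$. By Proposition \ref{p:closed-formula}, with $S=\sum p_i^2$ and $a+b\equiv j \pmod k$, this is $\tfrac{S}{2k^2}\bigl(a(k-a)+b(k-b)\bigr)$, which we must compare with $\tfrac{S}{2k^2}j(k-j)$. If $a+b=j$, then $a(k-a)+b(k-b)-j(k-j)=2ab>0$. If $a+b=j+k$, the difference is again positive; one checks this by the substitution $a'=k-a$, $b'=k-b$ with $a'+b'=k-j$. So the unique copy of $V_{\lambda_{jn/k}}$ lies in strictly lower degree than anything in $\fn^2_{\chi_j}$, and hence is not in $\fn^2$.
\item For $\fg$ in degree $1$, we need $\fg\not\subset\fn^2$. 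Degree-$1$ elements of $\fn^2$ must come from products of pieces of degrees in $\tfrac12\Z_{>0}$. The minimal degrees of the nonzero $\chi$-components are $\tfrac{S}{2k^2}j(k-j)$, and in the $\chi_0$ component the minimal degree is $1$. So a product lands in degree $1$ only when it is $\chi_a\cdot\chi_{k-a}$ with $\tfrac{S}{k^2}a(k-a)\le 1$. This forces $S/k^2=\sum(p_i/k)^2$ to be small, which gives the exceptional cases; there, $\fg$ could be in degree $\le 1$ products. We would check that $S/k^2\cdot a(k-a)\le1$ forces $a(k-a)=1$ and $\sum(p_i/k)^2=1$, i.e. $k=2$ and $p=(2)$, $n=2$. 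This is exactly what the hypothesis $n>2$ excludes. In all remaining cases, either the degree bound is strict, or the product is zero, or by equality the product lands in the $\chi_0$ component; in the latter case we compare with the adjoint representation and must rule out $\fg\subset V_{\lambda_{an/k}}\cdot V_{\lambda_{(k-a)n/k}}$ exactly in degree $1$.
\end{itemize}

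\emph{Main obstacle.} The main obstacle is this last equality case, where $\sum p_i^2\,a(k-a)=k^2$. We expect to resolve it by the elementary inequality $\sum p_i^2\ge k\,n\ge k^2$, with equality forcing $p=(k)$ and $n=k=2$, since $a(k-a)\ge k-1$. Once this is settled, both steps combine to give $\fa=\fg\oplus\bigoplus_j V_{\lambda_{jn/k}}$, where the index runs over $j\neq0$ and the $j=0$ term is $V_0$, which is not in $\fn$.
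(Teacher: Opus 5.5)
Your proposal is correct and follows the same route as the paper. The upper bound comes from Theorem~\ref{t:generate} together with the fact that $R(\co)$ is generated by $\fg$ in degree $1$. The lower bound is a degree comparison via Corollary~\ref{c:minuscule} and Proposition~\ref{p:closed-formula}; your identity $a(k-a)+b(k-b)-j(k-j)=2ab$ and its $a+b=j+k$ analogue supply exactly the details the paper omits.

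You also prove $\fg\not\subset\fn^2$, which the paper leaves implicit and which is precisely where $n>2$ enters. Your own bound $\sum_i (p_i/k)^2\, a(k-a)\ge 1$, with equality only for $p=(2)$, already disposes of the ``equality case''. So the extra comparison with the adjoint representation that you list as the main obstacle is not needed.
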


\begin{proof}
The inclusion $\fa \subset  \fg\oplus (\oplus_j V_{\lambda_{j n/k}})$ follows from
the fact that $V_{\lambda_{j n/k}}$ generates $R(\mcM)_{\chi_j}$ over $R(\co)$.
By examining the degrees in which the representations $V_{\lambda_{j n/k}}$
occur, one can show that $V_{\lambda_{j n/k}}$ is not in $\fn^2$, so the
inclusion is an equality.  We omit the details, which are similar to 
the proof for the principal orbit given in \cite[Theorem 4.1]{Gra1992}.
\end{proof}

\begin{remark} \label{r:low-degree}
It is of interest to understand \(R^d(\widetilde{\mcO})\) for \(d=\tfrac{1}{2}\) and \(d=1\).
These spaces can be studied for any cover of $\mcO$, but the functions on any cover are a subspace
of the functions on the universal cover, so we restrict our attention to the universal cover.
From \eqref{e:closed formula}, we see that \(\deg(V_{\lambda_{jn/k}})= \tfrac{1}{2}\) if and only if \(\widetilde\mcO\) is the universal cover of the principal orbit in \(SL_2\) with \(j=1\).
The space
\(R^1(\mcM)\) is a simple Lie algebra $\fg'$ containing $\fg$; when $\fg'$ is strictly larger than $\fg$, it gives rise
to one of the ``shared orbit pairs" classified by Brylinski and Kostant (see \cite{BrylinskiKostant1994}).
Corollary \ref{c:minuscule} implies that $\fg'$ is larger than $\fg$ exactly when
\(\deg(V_{\lambda_{jn/k}})=1\) for some $j$.  From \eqref{e:closed formula}, we see that this occurs 
if and only if \(\widetilde\mcO\) is the universal cover of the principal orbit in \(SL_3\) with \(j=1,2\), or \(\widetilde\mcO\) is the universal cover for \(p=(2^2)\) with \(j=1\).  Thus,
these are the only examples for $G = SL_n$ where $\fg'$ is larger than $\fg$.  This
is consistent with the classification of shared orbit pairs in \cite{BrylinskiKostant1994}.  
\end{remark}

\begin{lemma} \label{l:degree-boundary}
Let $v$ be a partition of $n$ different from $p$, such that the orbit $\co_v$ is contained in $\overline{\co}$, where $\co = \co_p$.  Write $R(\mcM_p)$ and $R(\mcM_v)$ for the rings of functions on the universal covers of $\co_p$ and $\co_v$.  Suppose $V$ is a minuscule representation occuring in $R^{d_p}(\mcM_p)$.  If $V$ occurs in $R^{d_v}(\mcM_v)$ for some $d_v$, then
$d_p > d_v$.
\end{lemma}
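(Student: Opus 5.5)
Since $V$ is minuscule, $V = V_{\lambda_r}$ for some $r$, and its central character is determined by $r$. By Corollary \ref{c:minuscule}, applied to $p$ and to $v$, $V$ occurs in $R(\mcM_p)$ exactly once, and it occurs in $R(\mcM_v)$ at all only if $r$ is a multiple of $n/k_v$. Here $k_p, k_v$ denote the gcd's of the parts of $p$ and $v$. Similarly $r$ must be a multiple of $n/k_p$. Write $r = sn$ with $s \in \Q$, $0 < s < 1$ (the case $r=0$ is the trivial representation, in degree $0$ on both sides; I would exclude it, reading $V$ as nontrivial minuscule). Then $sk_p$ and $sk_v$ are integers. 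By \eqref{e:degree-relation2} of Proposition \ref{p:closed-formula}, applied to both orbits,
$$
d_p = \tfrac{1}{2} s(1-s) \sum_i p_i^2, \qquad d_v = \tfrac12 s(1-s)\sum_i v_i^2 .
$$
Since $s(1-s) > 0$, the claim reduces to the purely combinatorial inequality $\sum_i p_i^2 > \sum_i v_i^2$.

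\textbf{Step 1.} I would use the closure order. $\co_v \subset \overline{\co_p}$ with $v \neq p$ means $v < p$ in dominance order: $\sum_{i\le m} v_i \le \sum_{i\le m} p_i$ for all $m$, with $v\ne p$.

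\textbf{Step 2.} I would show that $x \mapsto \sum x_i^2$ is strictly increasing along dominance on partitions of $n$. The cleanest route is to note that a strict dominance relation $v<p$ factors as a chain of elementary moves. Each move transfers one box from a row $b$ to a higher row $a<b$ with $p_a \ge p_b$, giving $\sum x_i^2$ a change of $(x_a+1)^2 + (x_b-1)^2 - x_a^2 - x_b^2 = 2(x_a - x_b) + 2 > 0$. Alternatively, one can use Schur convexity of the strictly convex function $\sum x_i^2$ (Karamata's inequality), which gives strict increase for majorization with $v\neq p$. I expect citing the standard fact that dominance is generated by such box moves (Brylawski) to suffice.

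The only subtle point is the reduction via Proposition \ref{p:closed-formula}. One must check that $V$ has the form $V_{\lambda_{jn/k}}$ for both orbits, with the same $r = jn/k_p = j'n/k_v$, so that the same $s$ appears in both formulas. This follows from Theorem \ref{t:rings-dom} and Corollary \ref{c:minuscule}: the only minuscule representations occurring in $R(\mcM_p)$ are the $V_{\lambda_{jn/k_p}}$, each in the stated degree. This step is where I would be careful. The rest is the strict convexity argument.
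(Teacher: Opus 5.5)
Your proposal is correct and follows essentially the paper's argument. You reduce via \eqref{e:degree-relation2}, with the same $s = r/n$ for both orbits, to the inequality $\sum p_i^2 > \sum v_i^2$, and then use that dominance is generated by box moves, each of which strictly changes the sum of squares. Your caveat that the trivial representation must be excluded (both degrees are $0$ there) is a fair point that the paper leaves implicit.
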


\begin{proof}
By \eqref{e:degree-relation2},  it suffices to show that if $\co_v \subset \overline{\co}_p$ and $v \neq p$, then
$\sum p_i^2 > \sum v_i^2$. 

Recall that a box move is given by replacing a partition $(r_1, r_2, \dots, r_{\ell})$ with $(r_1, r_2, \dots r_i - 1 \dots r_j + 1 \dots, r_{\ell})$ or $(r_1, r_2, \dots r_i - 1  \dots, r_{\ell}, 1)$. Equivalently, a box move is given by moving a single box in the associated Young diagram to a lower row. The hypothesis that $\co_v \subset \overline{\co_p} - \co_p$ is equivalent to $p > v$ in the dominance order. In particular, this means we can obtain $v$ by performing a finite number of box moves on $p$. Thus, it is sufficient to show that a box move decreases the sum of the squares of the parts of a partition. However, this is equivalent to the condition that if $a -2 \geq b \geq 0$, then $a^2 + b^2 > (a - 1)^2 + (b + 1)^2$. This is true since 

\vspace{4pt}

\end{proof}

\vspace{-46pt}

$$
a^2 + b^2 - (a - 1)^2 + (b + 1)^2 = 2(a - b - 1) > 0. 
$$

For the principal nilpotent orbit, the following result was observed by Brylinski and Kostant (see \cite[Remark 4.13]{BrylinskiKostant1994}).  

\begin{theorem} \label{t:bijection}
The natural map $\pi: \mcM \to \overline{\co}$ is bijective over the boundary of $\co$.
\end{theorem}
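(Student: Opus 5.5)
The plan is to argue orbit by orbit on the boundary, following Brylinski--Kostant. Since $\pi:\mcM\to\overline{\co}$ is finite, $G$-equivariant, and surjective, the preimage of a boundary orbit $\co_v \subset \overline{\co}\setminus\co$ is a finite union of $G$-orbits, each finitely covering $\co_v$. We must show that it is a single orbit mapping isomorphically onto $\co_v$ (bijectively, as sets). Write $\pi^{-1}(\co_v)$ and fix $x \in \co_v$. The center $Z$ acts on $\mcM$ through the surjection $f: Z \to A(\co)$, and $\pi$ is the quotient by this action, since $\mcM/A(\co) = \Spec R(\mcM)^{A(\co)} = \Spec R(\co) = \overline{\co}$ by normality from \cite{KP}. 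Hence $\pi^{-1}(x)$ is a single $Z$-orbit, and since $Z$ is central, the $G$-orbit $G\cdot y$ of a point $y\in\pi^{-1}(x)$ is all of $\pi^{-1}(\co_v)$. It therefore suffices to show that $Z$ (equivalently $A(\co)$) fixes $y$.

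To show this, suppose the stabilizer of $y$ in $A(\co)$ were a proper subgroup $A'$. Then the closure $\overline{G\cdot y}$ would be a normalization-related cover of $\overline{\co_v}$. Its normalization $\mcM'$ is the affinization of the $G$-equivariant cover $G\cdot y$ of $\co_v$, which has nontrivial deck group $A(\co)/A'$, a cyclic quotient. Restriction gives a surjection of graded $G$-modules $R(\mcM)\to R(\overline{G\cdot y})\subset R(\mcM')$. I would check that this map preserves degree by using the Brylinski--Kostant uniqueness of the grading: the dilation lift on $\mcM$ preserves the closed $G$-stable subvariety $\overline{G\cdot y}$ and induces the unique compatible grading on the cover of $\co_v$. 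The map is also $A(\co)$-equivariant, with $A(\co)$ acting on the target through $A(\co)/A'$.

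Now pick $j$ so that $\chi_j$ is nontrivial on $A(\co)$ but trivial on $A'$ is impossible; instead, pick $\chi_j$ nontrivial on $A(\co)/A'$ viewed as pulled back. The $\chi_j$-isotypic part of $R(\overline{G\cdot y})$ is nonzero, because the functions on a cover with deck group $A(\co)/A'$ contain every character of that group. By Theorem \ref{t:generate}, $R(\mcM)_{\chi_j}$ is generated over $R(\co)$ by $V_{\lambda_{jn/k}}$, in degree $\tfrac12\lambda_{jn/k}(h')$. Surjectivity then forces the image of $V_{\lambda_{jn/k}}$ to be nonzero, so $V_{\lambda_{jn/k}}$ occurs in $R(\mcM')$, in the same degree $d_p$. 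The cover $\mcM'$ is a quotient of the universal cover $\mcM_v$ of $\co_v$, so $R(\mcM')\subset R(\mcM_v)$ with compatible grading (uniqueness again), and $V_{\lambda_{jn/k}}$ occurs in $R^{d_p}(\mcM_v)$. But Lemma \ref{l:degree-boundary} says that any occurrence of this minuscule representation in $R(\mcM_v)$ is in degree $d_v<d_p$. It remains to note that the degree in $R(\mcM_v)$ is forced: if a minuscule $V_\lambda$ occurs in $R(\mcM_v)$ at all, Corollary \ref{c:minuscule} applied to $\co_v$ shows it occurs exactly once, in degree $\tfrac12\lambda(h'_v)$. This gives the contradiction $d_p = d_v$. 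Hence $A'=A(\co)$, every fiber over the boundary is a single point, and $\pi$ is bijective there.

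The main obstacle is the grading compatibility. Brylinski--Kostant's characterization of the grading applies to covers of orbits, while $\overline{G\cdot y}$ sits inside $\mcM$ with the grading coming from the Euler-type field $\Xi$. So I must check carefully that the $\C^*$-action on $\mcM$ (extending the dilation on $\overline{\co}$) restricts to a grading on the boundary cover compatible with $R(\overline{\co_v})$, and then invoke uniqueness. I would first establish that the $\C^*$-action on $\mcM$ exists (it covers the squared dilation on the normal variety $\overline{\co}$), that it commutes with $G$ and $Z$, and hence that it preserves $\pi^{-1}(\co_v)$.
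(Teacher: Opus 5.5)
Your proof is correct and is essentially the paper's argument: the same ingredients (Brylinski--Kostant uniqueness of the grading, which makes $R(\mcM)\to R(\overline{G\cdot y})\subset R(\mcM_v)$ graded, Theorem \ref{t:generate}, and Lemma \ref{l:degree-boundary}) do the work. The differences are only in arrangement: you get the single-orbit statement up front from $Z\cdot y\subset G\cdot y$ and then argue by contradiction, which needs the easy fact that every character of $A(\co)/A'$ occurs in the domain $R(\overline{G\cdot y})$, whereas the paper directly shows that every nontrivial isotypic component restricts to zero and deduces the single orbit at the end. (Your remark that choosing $\chi_j$ nontrivial on $A(\co)$ but trivial on $A'$ ``is impossible'' is a slip: that is exactly the character you then correctly choose.)
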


\begin{proof}
It suffices to show that if $\co_v$ is contained in the boundary of $\co = \co_p$, then $\pi^{-1}(\co_v)$ consists of a single $G$-orbit
which is isomorphic to $\co_v$.
Since $\pi$ is finite, $\pi^{-1}(\co_v)$ is a finite union of $G$-orbits, each of which is the same
dimension as $\co_v$.  In particular, each of these is a covering of $\co_v$.  

We claim that each of these orbits is isomorphic to $\co_v$.
Indeed, let $\co'_v$ be one such orbit.  
The ring $R(\co'_v)$ is a graded subring of $R(\mcM_v)$.
On the other hand, $R(\co'_v)$ is a quotient of $R(\mcM_p)$.
The orbit $\co'_v$ is invariant under the $\C^*$-action on $\mcM_p$
which lifts the square of the dilation action on $\fg$.  This action
induces a grading on $R(\co'_v)$, and the quotient map
$f: R(\mcM_p) \to R(\co'_v)$ is a map of graded rings.  The gradings
on $R(\co'_v)$ obtained in these two ways---as a subring of $R(\mcM_v)$,
and as a quotient of $R(\mcM_p)$---coincide, since both
gradings are $G$-invariant and compatible with the grading on $S(\fg)$,
and by \cite[Remark 1.5]{BrylinskiKostant1994}, these properties characterize
the grading on $R(\co'_v)$. 

Since $R(\co'_v)$ is a graded $G$-invariant subring of $R(\mcM_v)$, Lemma \ref{l:degree-boundary}
implies that each of the minuscule representations 
in $R(\mcM_p)$ occurs (if at all) in $R(\co'_v)$
in degree strictly less than in $R(\mcM_p)$.  Hence the minuscule
representations all map to $0$ under $f$.  Theorem \ref{t:generate} then implies that for each nontrivial character $\chi_j$ of $A(\co)$, the map
$f$ takes the isotypic component $R(\mcM_p)_{\chi_j}$ to $0$.  Thus, the image of $f$ is contained in the image under $f$ of the subring $R(\overline{\co}_p)$ of $R(\mcM_p)$.
But this image lies in the subring $R(\overline{\co}_v)$ of $R(\mcM_v)$.  We conclude that $R(\co'_v) = R(\overline{\co}_v)$.  Hence 
$\co'_v$ is isomorphic to $\co_v$, proving the claim.

The map $\pi$ is a quotient map by $A(\co_p)$, and the left action of $Z$ is
compatible with the right action of $A(\co_p)$.  Thus, $Z$ acts transitively on the set of irreducible components of $\pi^{-1}(\co_v)$.
On the other hand, $Z$ acts trivially on $\co_v$, so it is contained in the stabilizer of any point of $\co_v$.  Since
$\co_v \cong \co'_v$,  $Z$ is contained in the stabilizer of
any point of $\co'_v$.  Therefore $Z$ fixes the orbit $\co'_v$.  We conclude that $\pi^{-1}(\co_v)$ consists of a single $G$-orbit.  
By the first paragraph, that orbit is isomorphic to $\co_v$.  The theorem follows.
\end{proof}

\section{A vanishing conjecture} \label{s:vanishing}
In this section, we state a cohomology vanishing conjecture generalizing the
vanishing proved in this paper.   We show that this conjecture
would yield an alternative proof of Theorem \ref{t:generate}. Finally,
we discuss the relation of our results to \cite[Conjecture 4.2]{Som}.

\begin{conjecture} \label{conj:vanishing}
Let $P' = L'U'$ be a Richardson parabolic for $\co$
such that $(L', \mu')$ is conjugate to $(L, \mu_j)$.
Then
$$
H^i(G/P', R(\fu') \otimes \C_{\mu'}) = 0 \mbox{    for    } i>0.
$$
\end{conjecture}

The conjecture would have the following consequence.
With the notation of the conjecture, let $e' \in \co \cap \fu'$ and  $h' \in \fh$ such that $[h',e'] = - 2e'$.

\begin{proposition} \label{p:conjecture}
Suppose $(L', \mu')$ is conjugate to $(L, \mu_j)$
Then $R(\mcM)_{\chi_j} = \Ind_{L'}^G(\mu')$, and if Conjecture \ref{conj:vanishing} holds, then
\begin{equation} \label{e:graded-van-char}
R^d(\mcM)_{\chi_j} = H^0(G/P', R^{d - \mu'(h')/2} (\fu') \otimes \C_{\mu'}).
\end{equation}
\end{proposition}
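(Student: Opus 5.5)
The first assertion, $R(\mcM)_{\chi_j} = \Ind_{L'}^G(\mu')$, does not need the conjecture. By Theorem \ref{t:rings}, $R(\mcM)_{\chi_j} = \Ind_L^G(\mu_j)$ as $G$-modules. Since $(L',\mu')$ is conjugate to $(L,\mu_j)$, the isomorphism \eqref{e:ind-conj} gives $\Ind_L^G(\mu_j)\cong \Ind_{L'}^G(\mu')$.

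For the graded statement, I assume Conjecture \ref{conj:vanishing} and argue in four steps.

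First, Proposition \ref{p:conjugate} shows that $e^{\mu'}$ is a character of $L'$ lifting $\chi_j$, since $(L',\mu')$ is conjugate to $(L,\mu_j)$ and $e^{\mu_j}$ lifts $\chi_j$. By Definition \ref{d:Levi-lift}, this lift is relative to \emph{some} Richardson parabolic containing $L'$. Proposition \ref{p:independent} then shows that the extension of $e^{\mu'}$ to the given parabolic $P'$ also lifts $\chi_j$.

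Second, Lemma \ref{lem:inclusion} applies and gives an inclusion of $R(\co)$-modules $H^0(G/P', R(\fu')\otimes\C_{\mu'}) \subset R(\mcM)_{\chi_j}$. The graded version, as in \eqref{e:gradingP}/\eqref{e:gradingQ}, applied with $e'$ and $h'$, yields the inclusion
$$H^0(G/P', R^{d}(\fu')\otimes \C_{\mu'}) \subset R^{d+\mu'(h')/2}(\mcM)_{\chi_j}$$
for each $d$. This is Brylinski's argument, which the paper notes extends to arbitrary orbits and Richardson parabolics.

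Third, I show the total inclusion is an equality. Under the conjecture, the higher cohomology vanishes, so \cite[Lemma 2.1]{McG} gives $H^0(G/P', R(\fu')\otimes\C_{\mu'}) \cong \sum_i(-1)^iH^i = \Ind_{L'}^G(\mu')$ as $G$-modules. By the first part, this is isomorphic to $R(\mcM)_{\chi_j}$. Both sides have finite $G$-isotypic multiplicities: $R(\mcM)$ is a finitely generated module over $R(\co)$, each graded piece is finite dimensional, and the grading is nonnegative. So comparing multiplicities of each irreducible $V$ shows that the $V$-isotypic parts of the subspace and of the whole space are equal finite-dimensional spaces, and the inclusion is an equality. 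Summing the graded inclusions over all $d$ and using this equality forces each graded inclusion to be an equality. Reindexing $d \mapsto d-\mu'(h')/2$ then gives \eqref{e:graded-van-char}.

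The main obstacle is the first step: checking that the lift is valid for the specific parabolic $P'$, not merely for some conjugate of $P$. Proposition \ref{p:independent} is designed to handle exactly this. A secondary point is the finiteness of multiplicities used in the third step. I would justify it either by the finite generation of $R(\mcM)$ over the finite-multiplicity module $R(\co)$, or directly from Frobenius reciprocity for $\Ind_{L'}^G(\mu')$.
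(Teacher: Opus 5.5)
Your proposal is correct and is essentially the paper's own argument. The paper's proof just says that the proof of Theorem \ref{t:graded} carries over, with $P'$ in place of $Q$ and Conjecture \ref{conj:vanishing} in place of Broer's vanishing theorem. Your steps make that explicit: you lift $\chi_j$ to $P'$ via Propositions \ref{p:conjugate} and \ref{p:independent}, take the graded inclusion from Lemma \ref{lem:inclusion}, and then use the abstract isomorphism with $\Ind_{L'}^G(\mu')$ and finiteness of multiplicities to force equality in each degree.
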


\begin{proof}
The proof of Theorem \ref{t:graded} applies almost unchanged.
\end{proof}

\begin{remark}
Conjecture \ref{conj:vanishing} is largely motivated by the case where $\co$ is principal, since
then $P' = B$ and the conjecture holds by \eqref{e:Hesselink}.  It also holds
for $P'=P$ and $\mu' = \mu_j$, as shown in the proof of 
Theorem \ref{t:rings}.  However, cohomology vanishing also holds in certain cases where the hypothesis
that $(L', \mu')$ is conjugate to $(L, \mu_j)$ is not assumed.  For example, let
$\mu'$ be any $L$-dominant weight and let 
$F_{\mu'}$ denote the irreducible representation of $L'$
with highest weight $\mu'$.  
If $\mu'$ is also dominant for $G$, then
\begin{equation} \label{e:vanish-remark}
H^i(G/P', R(\fu') \otimes F_{\mu'}) = 0 \mbox{    for    } i>0,
\end{equation}
by \cite[Theorem 2.2]{Bro} if $\dim F_{\mu'} = 1$ (for $G$ of any type),
and by \cite[Theorem 2.3]{Grant} for any $F_{\mu'}$ (in type $A$).  In a different
direction, we can replace
the assumption that $(L', \mu')$ is conjugate to $(L, \mu_j)$
by the weaker assumption that   
$\mu'$ is $W$-conjugate to $\mu_j$.  Some additional
hypothesis is then required for \eqref{e:vanish-remark} to hold, since Grantcharov has provided an example in
which $\dim F_{\mu'} > 1$ and \eqref{e:vanish-remark} fails.
We can ask if either of the following suffices:
\begin{enumerate}
\item $\dim F_{\mu'} = 1$
\item $e^{\lambda_{j n/k}}$ defines an equally distributed character
of $L'$, and $F_{\mu'}$ is an irreducible constituent of $V_{\lambda_{j n/k}} |_{L'}$.
\end{enumerate}
The second condition is exactly what would be required to prove
Theorem \ref{t:generate} by adapting the argument for the principal orbit
given in \cite[Theorem 3.2]{Gra1992}.
This would be different from the proof of Theorem \ref{t:generate} in this paper, which relies on Grantcharov's result \cite[Theorem 2.3]{Grant}.
\end{remark}

We now discuss the relation between this paper and
Sommers's conjecture (\cite[Conjecture 4.2]{Som}).
This conjecture would (as observed in \cite{Som}) imply formulas
for the graded $G$-module decomposition for $R(\widetilde{\co})$.
For orbits that are not even, this conjecture differs what is done
in this paper, because it is based on a resolution built
from a Jacobson-Morozov parabolic, so we assume $\co$ is even.
Let $\{ h', e, f \}$ be an $\mathfrak{sl}_2$ triple containing $e$.
(The element $h'$
corresponds to our $-h$, since our convention is $[h, e] = -2e$.)  Let $\fl'$ denote the centralizer
of $h'$ in $\fg$, let $\fu'$ be the span of the positive eigenspaces for $h'$,
and let $\fp' = \fl' + \fu'$.
Then  \cite[Conjecture 4.2]{Som} is equivalent to the statement that if $\lambda$ is a minimal lift of $\chi_j$
to $P'$, then $R^d(\mcM)_{\chi_j} = H^0(G/P', R^{d + \lambda(h')/2} (\fu') \otimes \C_{\lambda})$,
and the higher cohomology vanishes.  This coincides with the statement in
Proposition \ref{p:conjecture} for even orbits
and the Jacobson-Morozov parabolic, since by Proposition \ref{jlifts}, the lifts in that proposition are exactly the minimal lifts.
(The translation from \cite{Som} uses the fact that the grading on $R(\mcM)$  in \cite{Som} is double
ours, so the index $n$ in $\Gamma^n_V$ corresponds to degree $d = n/2$ in our conventions.
Also, we have replaced cohomology on $G/B$ with cohomology on $G/P'$, since $V$ is a $P$-module,
then $H^i(G/B, V) = H^i(G/P', V)$, by \cite[Proposition II.4.6(b)]{Jant}.)

\bibliographystyle{amsplain}

\end{document}